\documentclass{article}
\usepackage{amssymb,amsfonts,amsmath,amsthm,amsopn,amstext,amscd,latexsym,xy,mathdots,stmaryrd}
\usepackage{hyperref}
\theoremstyle{plain}
\input xy
\xyoption{all}
\setlength{\textheight}{8.75in}
\setlength{\textwidth}{6.5in}
\setlength{\topmargin}{0.0in}
\setlength{\headheight}{0.0in}
\setlength{\headsep}{0.0in}
\setlength{\leftmargin}{0.0in}
\setlength{\oddsidemargin}{0.0in}
\setlength{\parindent}{3pc}
\newtheorem{theorem}{Theorem}[section]
\newtheorem{lemma}[theorem]{Lemma}

\newtheorem{prop}[theorem]{Proposition}

\newtheorem{cor}[theorem]{Corollary}
\newtheorem{definition}[theorem]{Definition}

\newtheorem{conj}[theorem]{Conjecture}

%
%
%

\newtheorem{Thm}[theorem]{Theorem}
\newtheorem{Lem}[theorem]{Lemma}
\newtheorem{Prop}[theorem]{Proposition}
\newtheorem{Cor}[theorem]{Corollary}
\newtheorem{Def}[theorem]{Definition}

\newtheorem{Que}[theorem]{Question}

\theoremstyle{remark}

\numberwithin{equation}{section}
\numberwithin{paragraph}{section}
%
%
%

\newtheorem{Rem}[theorem]{Remark}

\DeclareMathOperator{\Hom}{Hom}

\DeclareMathOperator{\Gal}{Gal}
\DeclareMathOperator{\Aut}{Aut}

\DeclareMathOperator{\ad}{ad}

\DeclareMathOperator{\lcm}{lcm}

\DeclareMathOperator{\Pic}{Pic}

\DeclareMathOperator{\Ind}{Ind}

\DeclareMathOperator{\End}{End}

\DeclareMathOperator{\Frob}{Frob}

\DeclareMathOperator{\Spec}{Spec}

\newcommand{\GL}{\mathrm{GL}}

\newcommand{\SL}{\mathrm{SL}}

\newcommand{\N}{\mathbb{N}}

\newcommand{\A}{\mathbb{A}}
\newcommand{\Q}{\mathbb{Q}}

\newcommand{\C}{\mathbb{C}}

\newcommand{\aQ}{\overline{\mathbb{Q}}}
\newcommand{\Z}{\mathbb{Z}}

\newcommand{\aFl}{\overline{\mathbb{F}_{\ell}}}

\newcommand{\aQl}{\overline{\mathbb{Q}_{\ell}}}

\def\eps{\epsilon}
\def\rhobar{ {\bar {\rho} } }

\newcommand{\ra}{\rightarrow}

\newcommand{\F}{{\mathbb F}}

%
%
%

%
%

\newcommand{\BA}{{\mathbb{A}}}

\newcommand{\BC}{{\mathbb{C}}}

\newcommand{\BF}{{\mathbb{F}}\,\!{}}
\newcommand{\BG}{{\mathbb{G}}}

\newcommand{\BN}{{\mathbb{N}}}

\newcommand{\BQ}{{\mathbb{Q}}}

\newcommand{\BZ}{{\mathbb{Z}}}

%
%


\newcommand{\Fm}{{\mathfrak{m}}}
\newcommand{\Fn}{{\mathfrak{n}}}





%
%
\usepackage[ansinew]{inputenc}
\usepackage[dvipsnames,usenames]{color}

%
%
%
\def\dirlim{\mathop{\vtop{\hbox{\rm lim}\vskip-8pt
        \hbox{\hskip1pt$\scriptstyle\longrightarrow$}\vskip-1pt}}}
\def\invlim{\mathop{\vtop{\hbox{\rm lim}\vskip-8pt
        \hbox{\hskip1pt$\scriptstyle\longleftarrow$}\vskip-1pt}}}

\DeclareMathOperator{\id}{{id}}

\newcommand{\ab}{\mathrm{ab}}
\newcommand{\cts}{\mathrm{cts}}
\newcommand{\res}{\mathrm{res}}
\newcommand{\dimA}{g_A}
\newcommand{\can}{\mathrm{can}}

\newcommand{\ch}{\mathrm{ch}}
\newcommand{\blank}{\hbox{\phantom{i}}}
\newcommand{\image}{\mathop{{\rm Im}}\nolimits}

\newcommand{\kernel}{\mathop{\rm Ker}\nolimits}
\def\longto{\longrightarrow}

\title{Number of irreducible mod $\ell$  rank 2 sheaves on curves over finite fields}

\author{Gebhard  B\"ockle\footnote{\textsc{Interdisciplinary Center for Scientific Computing, Universit\"at Heidelberg, Heidelberg, Germany.} \textit{Email address}: \texttt{boeckle@uni-hd.de}}\ \ and Chandrashekhar B.  Khare\footnote{\textsc{Department of Mathematics, UCLA, Los Angeles, USA.} \textit{Email address}: \texttt{shekhar@math.ucla.edu}}} 

\setcounter{tocdepth}{1}
\begin{document}
\maketitle

\begin{abstract}
Let $X$ be a smooth projective curve of genus $g$  over a finite field $\F_q$ of characteristic $p$.  Consider primes  $\ell$ different from $p$. We formulate some questions related to a well known counting formula of Drinfeld in \cite{Drinfeld}. Drinfeld counts  rank 2, irreducible $\ell$-adic sheaves
on $X_n=X \times_{\F_q} \F_{q^n}$ as $n$ varies. We would like to count rank 2,  irreducible  mod $\ell$ sheaves on $X_n$ as $n$ varies.  Drinfeld's  $\ell$-adic count gives an upper bound for the mod $\ell$ count. We conjecture that Drinfeld's count is the correct asymptotic for the count of rank 2, irreducible mod $\ell$ sheaves on $X_n$ as $n$ varies, and $(n,\ell)=1$.  The conjecture is an invitation to finding methods to construct ``many'' irreducible  rank 2, mod $\ell$ sheaves on $X_n$ as $n$ varies.

We produce a lower bound on the mod $\ell$ count, which is weaker than the one conjectured, by counting  ``dihedral'' mod $\ell$ sheaves. We  make a deformation theoretic study of the number of $\ell$-adic sheaves which lift the pull backs $\mathcal F_n$ to $X_n$,  of a given mod $\ell$ sheaf $\mathcal F_{n_0}$ on $X_{n_0}$, as we vary $n$  with $n_0|n$.
\end{abstract}

\section{Introduction} 

\subsection{$\ell$-adic representations of $\pi_1(X)$}

Let $X$ be a smooth projective curve of genus $g$  over a finite field $\F_q$ of characteristic $p$.   Consider primes  $\ell$ different from $p$. Let $\aQ$ be an algebraic closure of $\Q$ and we  fix an embedding $\iota_\ell:\aQ \hookrightarrow \aQl$. 
In Theorem 1 of  \cite{Drinfeld}, a formula is given for the cardinality  $T(X,2,q^n,\ell)$  of the set of  irreducible two dimensional $\ell$-adic representations   $\rho: \pi_1(\overline X) \ra \GL_2(\aQl)$ which are fixed by ${\rm Frob}_q^n$, where $\overline X$ is the base change of $X$ to an algebraic closure $\overline{\F_q}$ of $\F_q$ and $\pi_1$ denotes the fundamental group. 

Let $U(X)$  be the set of (isomorphism classes of)  irreducible representations $\pi_1(\overline X) \rightarrow \GL_2(\aQl)$. It has an action of the geometric Frobenius ${\rm Frob}_q$, and  $T(X,2,q^n,\ell)$ is the cardinality of  $U(X)^{{\rm Frob}_q^n}$, the subset of  fixed points of $U(X)$   under ${\rm Frob}_q^n$.  It is easy to see  \cite[1.2]{Deligne-Comptage} that the set   $U(X)^{{\rm Frob}_q^n}$ is in natural bijection with equivalence classes under twisting by continuous characters $\Gal(\overline{\F_q}/\F_{q^n})\rightarrow \aQl^*$  of isomorphism classes of    2-dimensional $\ell$-adic representations of $\pi_1(X_n)$, with $X_n$ the base change of $X$ to $\F_{q^n}$, which remain irreducible on restriction to  $\pi_1(\overline X) $.  

%

As  a corollary of Theorem 1 of \cite{Drinfeld}, Drinfeld deduces:

\begin{theorem}\label{Drin}
 If $g>1$, then  there exists an integer $k$, and  integers $m_1,\cdots,m_k$, and Weil numbers  $\mu_1,\cdots,\mu_k$   of weights $j$ in the range  $0\leq j/2 <4g-3$ such that $$T(X,2,q^n,\ell)=q^{(4g-3)n}+\sum_{i=1}^km_i\mu_i^n.$$
\end{theorem}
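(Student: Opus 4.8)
The plan is to read the asserted shape of $T(X,2,q^n,\ell)$ off Drinfeld's closed-form evaluation of this quantity in Theorem~1 of \cite{Drinfeld}. That formula expresses $T(X,2,q^n,\ell)$ as an explicit finite expression in $q^n$ and the numbers $\#X(\F_{q^n})$, $\#J(\F_{q^n})$ of $\F_{q^n}$-points of $X$ and of its Jacobian $J$, together with point counts $\#\Sym^d X(\F_{q^n})$ for a bounded range of $d$; it also involves division by the orders $q^n-1$, $q^{2n}-1$ and $\#J(\F_{q^n})$ of the relevant twisting and automorphism groups. Each basic ingredient $q^n$, $\#X(\F_{q^n})$, $\#\Sym^d X(\F_{q^n})$, $\#J(\F_{q^n})$ is a $\Z$-linear combination of functions $n\mapsto\nu^n$ with $\nu$ a monomial in $q$ and in the reciprocal roots $\omega_1,\dots,\omega_{2g}$ of the zeta polynomial $P_X(t)=\prod_i(1-\omega_i t)$ of $X$ — hence a Weil number, $q$ having weight $2$, each $\omega_i$ weight $1$, and $\{\omega_i\}$ being $\Gal(\aQ/\Q)$-stable. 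Substituting $\#X(\F_{q^n})=q^n+1-\sum_i\omega_i^n$, $\#J(\F_{q^n})=\prod_i(1-\omega_i^n)$ and the analogous expansions of the $\#\Sym^d X(\F_{q^n})$, expanding the reciprocals of $q^n-1$, $q^{2n}-1$, $\#J(\F_{q^n})$ as geometric series, and using the simplification of Drinfeld's formula (his Theorem~1) that makes all of these divisions come out exactly, one rewrites $T(X,2,q^n,\ell)=\sum_{\nu}c_\nu\,\nu^n$ as a finite $\aQ$-linear combination over distinct Weil numbers $\nu=q^{a}q^{\epsilon/2}\omega_{i_1}\cdots\omega_{i_b}$, each of integer weight $w(\nu)=2a+\epsilon+b$.

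The second step is to isolate the dominant term. Inspecting the $q$- and $\omega$-degrees occurring in Drinfeld's formula, one checks that among the $\nu$ there is a unique one of largest weight, namely $q^{4g-3}$, of weight $8g-6$, occurring with coefficient $1$, while every other $\nu$ has weight $<8g-6$; geometrically this reflects that $q^{(4g-3)n}$ is the top Lefschetz contribution of a moduli space of dimension $4g-3$ — the stable rank-$2$ bundles on $X$ — which is geometrically irreducible. Thus $T(X,2,q^n,\ell)=q^{(4g-3)n}+R(n)$ with $R(n)=\sum_i c_i\mu_i^n$ over finitely many distinct Weil numbers $\mu_i$ of weights $j_i$ satisfying $0\le j_i/2<4g-3$.

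It remains to see that the coefficients can be taken in $\Z$. The key is that, its denominators once cleared, Drinfeld's formula exhibits $T(X,2,q^n,\ell)$ as $\tr(\Frob_q^n\mid V)$ for an honest \emph{virtual} $\ell$-adic $\Gal(\overline{\F_q}/\F_q)$-representation $V$ — a $\Z$-linear combination of the cohomology groups entering the Lefschetz formulas for the point counts above — which by Deligne's bounds is mixed of weights $\le 8g-6$ with one-dimensional top-weight quotient $\aQl(-(4g-3))$. Reading off the distinct Frobenius eigenvalues $\mu_j$ of $V$ together with their integer multiplicities $m_j$ yields $R(n)=\sum_j m_j\mu_j^n$ with $m_j\in\Z$ and the $\mu_j$ Weil numbers of weights $j$ in the stated range, as claimed. (From $R(n)\in\Z$ for all $n$ one gets directly, via Fatou's lemma, only that the $\mu_i$ are algebraic integers stable under $\Gal(\aQ/\Q)$ and permuted within weight blocks; obtaining \emph{integer} coefficients genuinely uses the virtual-representation interpretation.)

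The one substantive point, I expect, is exactly this interplay with the denominators: showing that after clearing $q^n-1$, $q^{2n}-1$ and $\#J(\F_{q^n})$ Drinfeld's formula still computes the trace of Frobenius powers on a genuine virtual $\ell$-adic representation whose leading term is exactly $q^{(4g-3)n}$ and all of whose other eigenvalues have strictly smaller weight. This is where Drinfeld's Theorem~1 is used essentially; the passage from point counts to Weil-number exponentials, the weight estimates, and the integrality of the multiplicities are then formal.
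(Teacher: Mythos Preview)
The paper does not give its own proof of this theorem: it is stated as a corollary that Drinfeld himself deduces from Theorem~1 of \cite{Drinfeld}, and the paper only offers a one-line gloss on the method, namely that Drinfeld counts unramified cuspidal automorphic representations of $\GL_2(\A_{X_n})$ via the trace formula, and that the leading term $q^{(4g-3)n}$ comes from the identity orbital integral on the geometric side. So there is nothing in the paper to compare your argument against beyond this remark.

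Your sketch is in the right spirit --- expand Drinfeld's closed formula as a finite sum of $n$-th powers of Weil numbers and isolate the top-weight term --- but several of the details you supply are speculative rather than verified. You describe Drinfeld's formula as built from $\#X(\F_{q^n})$, $\#\Sym^d X(\F_{q^n})$, $\#J(\F_{q^n})$ with denominators $q^n-1$, $q^{2n}-1$, $\#J(\F_{q^n})$; that is a plausible shape but you have not checked it against the actual statement of \cite[Thm.~1]{Drinfeld}, and your argument leans on ``the simplification of Drinfeld's formula \ldots\ that makes all of these divisions come out exactly'' without saying what that simplification is. Likewise, your attribution of the leading term to the top cohomology of the moduli space of stable rank-$2$ bundles is a heuristic reinterpretation; the paper (following Drinfeld) identifies it instead with the identity contribution in the trace formula. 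Finally, your argument for integrality of the $m_i$ via a ``virtual $\ell$-adic representation'' is asserted rather than established: you would need to actually exhibit $T(X,2,q^n,\ell)$ as an alternating sum of traces on genuine cohomology groups, and this is not automatic from the trace-formula expression. None of this is wrong as an outline, but as written it is a plan for a proof rather than a proof, and the substance resides entirely in \cite{Drinfeld}.
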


This is a Lefschetz like formula and suggests that one is actually counting $\F_{q^n}$-valued points of a smooth projective  variety ${M}_X$ of dimension $4g-3$ that is defined over $\F_q$. The number $4g-3$ is  suggestive as it is the dimension of the moduli space  of irreducible two-dimensional unitary representations of the fundamental group of a compact Riemann surface of genus $g>1$. Deligne in \cite[1.5 and Conj.~2.15]{Deligne-Comptage} suggests instead  that there may be a variety $M_X$ in characteristic 0 and a suitable endomorphism $V$ on its cohomology such that the Lefschetz fixed point point formula applied to the  action of $V^n$ on $\ell$-adic cohomology gives the formula of Theorem \ref{Drin}.

Drinfeld's method of proof of Theorem \ref{Drin} is to count instead everywhere unramified cuspidal automorphic representations of $\GL_2(\A_{X_n})$, where $\A_{X_n}$ is the adeles of the function field of $X_n$, via the trace formula.
A theorem of Drinfeld shows that an irreducible representation $\pi_1(X_n) \rightarrow \GL_2(\aQl)$ arises from an everywhere unramified cuspidal  automorphic representation of $\GL_2(\A_{X_n})$ with respect to the fixed embedding $\iota_\ell$. Thus  the number $T(X,2,q^n,\ell)$ is independent of $\ell$, and may be denoted by $T(X,2,q^n)$. 
 The leading term of Drinfeld's formula comes from the trivial (identity) term, i.e. the orbital integral of the identity,  of the geometric side of the trace formula.

\subsection{Mod $\ell$ representations of $\pi_1(X)$}

We would like to consider a similar counting problem for mod  $\ell$ sheaves.
Namely consider  cardinality  $\overline{T(X,2,q^n,\ell)}$  of  the set of (isomorphism classes of) irreducible two dimensional mod $\ell$ representations $\rhobar:\pi_1(\overline X) \ra \GL_2(\aFl)$
which are fixed  by ${\rm Frob}_q^n$. Thus $\overline{T(X,2,q^n,\ell)}$ is the cardinality of  $\overline{U(X)}^{{\rm Frob}_q^n}$, the subset  $\overline U(X)$ of irreducible representations $\pi_1(\overline X) \rightarrow \GL_2(\aFl)$ that are fixed by ${\rm Frob}_q^n$. We again see $\overline{T(X,2,q^n,\ell)}$  is the number of  equivalence classes under twisting by continuous characters $\Gal(\overline{\F_q}/\F_q)\rightarrow \aFl^*$  of (isomorphism classes) of    2-dimensional representations of $\pi_1(X_n) \ra \GL_2(\aFl)$ which remain irreducible on restriction to~$\pi_1(\overline X) $. 

We also denote by $T'(X,2,q^n,\ell)$  and by $\overline{T'(X,2,q^n,\ell)}$  the cardinality  of   equivalence classes of    2-dimensional $\ell$-adic and mod $\ell$ representations of $\pi_1(X_n)$, respectively,  that remain irreducible on restriction to  $\pi_1(\overline X) $,  under twisting by continuous characters $\pi_1(X_n) \rightarrow \aQl^*$ and $\pi_1(X_n) \rightarrow \aFl^*$, respectively.

%

\subsection{Lifting}

A. de Jong has shown in \cite{deJong}  that the deformation rings of mod $\ell$ representations of $\pi_1(X_n)$ that are absolutely irreducible when restricted to $\pi_1(\overline X)$ are finite flat complete intersections over $\Z_\ell$. This implies  a lifting result and  one has:

\begin{lemma} [{\cite[Thm.~1.3]{deJong}}]\label{Lem-Lifting}
Every irreducible two dimensional mod $\ell$ representations $\rhobar:\pi_1(\overline X) \ra \GL_2(\aFl)$ which is fixed  by ${\rm Frob}_q^n$ lifts to an (irreducible) representations $\rhobar:\pi_1(\overline X) \ra \GL_2(\aQl)$ which is fixed  by ${\rm Frob}_q^n$.

In particular, we have the inequality 
$$\overline{T(X,2,q^n,\ell)} \leq  T(X,2,q^n,\ell).$$
\end{lemma}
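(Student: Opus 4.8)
The plan is to deduce Lemma~\ref{Lem-Lifting} from de~Jong's structural result on deformation rings, used in its natural Galois-equivariant form. Let $\rhobar:\pi_1(\overline X)\to \GL_2(\aFl)$ be irreducible and fixed by $\Frob_q^n$. The fixing condition means exactly that $\rhobar$ extends (after the choice of an extension of $\Frob_q^n$) to a representation of $\pi_1(X_n)$; more precisely, $\rhobar$ is isomorphic to its conjugate by $\Frob_q^n$, and since $\rhobar$ is irreducible, Schur's lemma produces a matrix $A$, unique up to $\aFl^*$, intertwining $\rhobar$ with $\rhobar\circ\Frob_q^n$. One checks the obstruction in $H^2(\hat{\bbZ},\aFl^*)=0$ vanishes (or twists $\rhobar$ by a character to kill it), so $\rhobar$ genuinely extends to $\tilde\rhobar:\pi_1(X_n)\to\GL_2(\aFl)$ whose restriction to $\pi_1(\overline X)$ is the original irreducible $\rhobar$. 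This reduces the problem to: every such $\tilde\rhobar$ lifts to an $\ell$-adic representation of $\pi_1(X_n)$ that is irreducible on $\pi_1(\overline X)$.

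Next I would invoke \cite[Thm.~1.3]{deJong}: the universal deformation ring $R$ of $\tilde\rhobar$ (with fixed or unfixed determinant, whichever is convenient) is a finite flat complete intersection over $\bbZ_\ell=W(\aFl)$ — in particular $R$ is flat over $\bbZ_\ell$ and nonzero. Flatness over a DVR together with $R\neq 0$ forces $R[1/\ell]\neq 0$, so $R$ has a $\bar{\bbQ}_\ell$-point (a maximal ideal of $R[1/\ell]$ with residue field finite over $\bbQ_\ell$, hence embeddable in $\aQl$). That point is a continuous lift $\rho:\pi_1(X_n)\to\GL_2(\aQl)$ of $\tilde\rhobar$. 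Because $\rho$ reduces to $\tilde\rhobar$ modulo the maximal ideal and $\tilde\rhobar|_{\pi_1(\overline X)}$ is absolutely irreducible, $\rho|_{\pi_1(\overline X)}$ is absolutely irreducible as well (irreducibility is an open condition, or: any $\pi_1(\overline X)$-stable line in $\rho$ would reduce to one in $\tilde\rhobar$). Restricting $\rho$ to $\pi_1(\overline X)$ gives the desired irreducible $\ell$-adic representation fixed by $\Frob_q^n$ lifting $\rhobar$.

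Finally, for the inequality: the lifting just produced defines a map from the set counted by $\overline{T(X,2,q^n,\ell)}$ to the set counted by $T(X,2,q^n,\ell)$. One must check this map is well defined on isomorphism classes and is injective — but injectivity is immediate, since reduction modulo $\ell$ (for a $\GL_2(\aZp)$-valued representation, after conjugating into a lattice) is a left inverse up to the twisting equivalence: two $\ell$-adic irreducibles reducing to the same mod $\ell$ class and both fixed by $\Frob_q^n$ need not be equal, so rather than arguing injectivity directly I would simply observe that \emph{every} element of $\overline{U(X)}^{\Frob_q^n}$ lies in the image of the reduction map $U(X)^{\Frob_q^n}\to\overline{U(X)}^{\Frob_q^n}$ (this is exactly the lifting statement), whence the source set is no larger than the target, giving $\overline{T(X,2,q^n,\ell)}\le T(X,2,q^n,\ell)$.

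The main obstacle is the first paragraph: making precise the passage between ``$\rhobar$ fixed by $\Frob_q^n$'' and ``$\rhobar$ extends to $\pi_1(X_n)$,'' and ensuring de~Jong's theorem is applied to a representation of $\pi_1(X_n)$ (not merely $\pi_1(\overline X)$) so that its universal deformation ring is the finite flat complete intersection in question. This is the standard descent-along-Frobenius argument already implicit in \cite[1.2]{Deligne-Comptage} and the discussion preceding the lemma, so in the write-up I expect it to occupy only a sentence or two; the deformation-theoretic input and the DVR-flatness argument are then routine.
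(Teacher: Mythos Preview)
Your proposal is correct and matches the paper's approach: the paper gives no proof of this lemma beyond the sentence preceding it (``the deformation rings \ldots\ are finite flat complete intersections over $\Z_\ell$. This implies a lifting result''), and your write-up is precisely the standard way to unpack that sentence---extend $\rhobar$ to $\pi_1(X_n)$ via the descent/Schur argument (as in \cite[1.2]{Deligne-Comptage}, already invoked in the introduction), apply de~Jong's finite-flatness to the deformation ring of the extended representation, and read off a $\overline{\bbQ}_\ell$-point. One small wording slip: in your last paragraph the reduction map has source $U(X)^{\Frob_q^n}$ and target $\overline{U(X)}^{\Frob_q^n}$, so surjectivity gives that the \emph{target} is no larger than the \emph{source}; your conclusion $\overline{T}\le T$ is correct, but the phrase ``the source set is no larger than the target'' is reversed.
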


\subsection{Conjecture}

We make the following conjecture. It is premised on the expectation that there are   few congruences mod $\ell$ between representations  in $U(X)^{{\rm Frob}_{q}^n}$ as we vary $n$.

\begin{conj}\label{Conj-Main} We conjecture that:


$${\rm lim}_{n \ra \infty} {\overline{T(X,2,q^n,\ell)} \over {T(X,2,q^n,\ell)}}=1$$ where $n$ runs through all positive integers prime to $\ell$, and 

$${\rm lim}_{n \ra \infty} {\overline{T'(X,2,q^n,\ell)} \over T'(X,2,q^n,\ell)}=1$$ where $n$ runs through all positive integers.


\end{conj}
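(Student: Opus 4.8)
\emph{Reduction to a defect estimate.} Lemma~\ref{Lem-Lifting} already gives $\overline{T(X,2,q^n,\ell)}\le T(X,2,q^n,\ell)$, so the plan is to establish the matching lower bound. By Theorem~\ref{Drin} the Weil numbers $\mu_i$ have integral weight $j_i$ with $j_i/2<4g-3$, hence $j_i\le 8g-7$ and $|\mu_i^n|\le q^{(4g-\tfrac72)n}$; thus $T(X,2,q^n,\ell)=q^{(4g-3)n}\bigl(1+O(q^{-n/2})\bigr)$, and it suffices to show that the defect $D_n:=T(X,2,q^n,\ell)-\overline{T(X,2,q^n,\ell)}\ge 0$ is $o(q^{(4g-3)n})$ as $n\to\infty$ with $(n,\ell)=1$ (and similarly for $T'$). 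I would organise this around the reduction-mod-$\ell$ map on integral models: by Lemma~\ref{Lem-Lifting} it sends the $\rho\in U(X)^{\Frob_q^n}$ with \emph{irreducible} reduction \emph{onto} $\overline U(X)^{\Frob_q^n}$, with fibres of some size $f(\bar\rho)\ge 1$; writing $R_n\subseteq U(X)^{\Frob_q^n}$ for the locus of $\rho$ with reducible reduction, one gets the exact bookkeeping identity
$$D_n \;=\; |R_n| \;+\; \sum_{\bar\rho\in\overline U(X)^{\Frob_q^n}}\bigl(f(\bar\rho)-1\bigr).$$
So the problem splits into Step~A (the residually reducible locus $R_n$ is negligible) and Step~B (there are few congruences mod $\ell$).

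\emph{Step~A: the residually reducible locus.} Each $\rho\in R_n$ has semisimple reduction $\bar\chi_1\oplus\bar\chi_2$ with $\bar\chi_i\colon\pi_1(\overline X)\to\aFl^*$ fixed by $\Frob_q^n$; the number of such pairs is controlled by $\#\,\mathrm{Jac}_X(\F_{q^n})=q^{gn}(1+o(1))$, hence is $q^{2gn+o(n)}$. For a fixed pair, the $\rho\in R_n$ reducing to it are the $\aZp$-points of a residually reducible deformation space of $\pi_1(X_n)$-representations, which de Jong exhibits as a finite flat $\Z_\ell$-algebra; its mod-$\ell$ tangent space is $\bigoplus_{i,j}H^1(\pi_1(X_n),\bar\chi_i\bar\chi_j^{-1})$, of dimension $\le 2g+O(1)$ \emph{uniformly in $n$}, since $\pi_1(\overline X)$ is independent of $n$ and $\dim H^1(\pi_1(\overline X),-)\le 2g$ for a rank-one coefficient module. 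Granting that a uniform bound on the embedding dimension forces the ranks of these complete-intersection $\Z_\ell$-algebras to be $q^{o(n)}$, one gets $|R_n|=q^{2gn+o(n)}$; as $g\ge2$ we have $2g<4g-3$, so $|R_n|=o(q^{(4g-3)n})$. Making the rank bound precise (and, more generally, controlling congruences \emph{inside} the reducible locus) is the one non-formal point here.

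\emph{Step~B: bounding congruences (the crux).} It remains to show $\sum_{\bar\rho}(f(\bar\rho)-1)=o(q^{(4g-3)n})$, i.e.\ that all but $o(q^{(4g-3)n})$ of the $\bar\rho$ have a \emph{unique} lift. Here $f(\bar\rho)=1$ as soon as the $\pi_1(X_n)$-deformation ring $R(\bar\rho)$ equals $W(\kappa(\bar\rho))$, and this holds once $H^1(\pi_1(X_n),\ad^0\bar\rho)=0$. By Hochschild--Serre, and since $H^0(\pi_1(\overline X),\ad^0\bar\rho)=H^2(\pi_1(\overline X),\ad^0\bar\rho)=0$ for absolutely irreducible $\bar\rho$, one has $H^1(\pi_1(X_n),\ad^0\bar\rho)=H^1(\pi_1(\overline X),\ad^0\bar\rho)^{\Frob_q^n}$, a $(6g-6)$-dimensional space on which $\Frob_q^n$ must avoid the eigenvalue $1$. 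The corresponding $\ell$-adic operator on $H^1(\pi_1(\overline X),\ad^0\rho)$ is pure of weight $1$ (Lafforgue, using Drinfeld's automorphy of $\rho$), so all its eigenvalues have absolute value $q^{n/2}>1$; hence the \emph{only} obstruction is the mod-$\lambda$ one, namely that $\lambda$ divides $\det(1-\Frob_q^n)$ on $H^1(\pi_1(\overline X),\ad^0\rho)$ --- a nonzero Weil number of absolute value $q^{(3g-3)n(1+o(1))}$, equivalently (via the congruence module $=$ adjoint Selmer group) the relevant adjoint $L$-value of $\rho$. Thus Step~B reduces to: \emph{for all but $o(q^{(4g-3)n})$ of the $\rho\in U(X)^{\Frob_q^n}$, $\lambda$ does not divide this adjoint invariant.}

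\emph{Main obstacle and candidate inputs.} Step~B is the whole difficulty; the rest is essentially formal, given Drinfeld and de Jong. Mere equidistribution of the adjoint invariant mod $\lambda$ would bound the exceptional density only by $O(1/\ell)$, which is not $o(1)$ for fixed $\ell$ --- the decay in $n$ has to come from something special to this geometry. I would try, in order: (i) Deligne's conjectural variety $M_X/\F_q$ with endomorphism $V$: if it exists with reasonable integral structure, congruences mod $\ell$ become collisions mod $\ell$ of $\F_{q^n}$-points, a codimension-$\ge1$ condition that yields the bound; (ii) a quantitative, $n$-uniform version of the deformation-theoretic analysis announced in the abstract --- showing a fixed mod-$\ell$ sheaf $\cF_{n_0}$ on $X_{n_0}$ acquires only $q^{o(n)}$ $\ell$-adic lifts after pullback to $X_n$, so that congruences are sparse among the $\sim q^{(4g-3)n}$ sheaves new at level $n$; (iii) a direct mod-$\ell$ trace-formula count of unramified cuspidal forms on $\GL_2$, which however collides with the failure of mod-$\ell$ multiplicity one and of a clean mod-$\ell$ Langlands bijection. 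The $T'$-statement and the hypothesis $(n,\ell)=1$ do not affect the plan: the former only changes the group of twists, the latter is a technical convenience (e.g.\ to avoid $\ell\mid q^n-1$ coincidences). Without one of these inputs, only the weaker dihedral lower bound of this paper --- of exponential order $q^{cn}$ with $c<4g-3$ --- seems within reach.
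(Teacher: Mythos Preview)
The statement is a \emph{conjecture}; the paper does not prove it, and explicitly says so (``The difficulty of the conjecture lies in the fact that we do not have a method to produce good lower bounds\ldots''). So there is no ``paper's proof'' to compare against. Your proposal is not a proof either --- it is a strategic outline that, to your credit, honestly flags Step~B as the unresolved crux. That assessment is correct and matches the paper's: the paper's actual contributions (Theorems~\ref{Thm-PrimeToEllTower}, \ref{Thm-EllPowerTower}, \ref{Thm-CountingDihedrals}) are partial evidence and lower bounds, not a proof of Conjecture~\ref{Conj-Main}.

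Two corrections to the outline itself. First, in Step~A you invoke de~Jong to say the residually reducible deformation space is a finite flat $\BZ_\ell$-algebra; but \cite[Thm.~1.3]{deJong} is stated for $\bar\rho$ that are absolutely irreducible on $\pi_1(\overline X)$, so it does not directly cover the reducible locus. Even granting finiteness, your parenthetical ``granting that a uniform bound on the embedding dimension forces the ranks \ldots\ to be $q^{o(n)}$'' is a genuine gap: bounded embedding dimension does not by itself bound the $\BZ_\ell$-rank of a finite flat complete intersection. Second, your dismissal of the hypothesis $(n,\ell)=1$ as ``a technical convenience'' is wrong. The paper's Theorem~\ref{Thm-EllPowerTower} and Remark~\ref{Rem-Long} show that along $n=n_0\ell^k$ the number of $\ell$-adic lifts of a fixed $\bar\rho$ genuinely grows (polynomially in $n$), and the dihedral count in Theorem~\ref{Thm-CountingDihedrals} exhibits the same phenomenon; this is exactly why the first limit is restricted to $(n,\ell)=1$ while the second, over all $n$, requires passing to $T'$ (quotienting by \emph{all} characters of $\pi_1(X_n)$, not just characters of the base). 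The two hypotheses are not interchangeable cosmetics.

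On your candidate inputs: (ii) is precisely what the paper pursues. Theorem~\ref{Thm-PrimeToEllTower} shows that for a \emph{fixed} $\bar\rho$ the set of lifts $L_{\bar\rho,n}$ stabilises in prime-to-$\ell$ towers, so $f(\bar\rho)$ is eventually constant in $n$. What is missing --- and what the paper does not claim --- is a bound on $f(\bar\rho)$ \emph{uniform in} $\bar\rho$ (or at least with sub-leading growth as new $\bar\rho$ appear at level $n$). Your reduction of Step~B to the nonvanishing mod $\lambda$ of $\det(1-\Frob_q^n\mid H^1(\pi_1(\overline X),\ad^0\rho))$ is a clean reformulation, but the paper offers no mechanism to control this for a density-one set of $\rho$, and neither do you. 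In short: your roadmap is coherent, but Conjecture~\ref{Conj-Main} remains open, and nothing in your proposal closes the gap you yourself identify.
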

 
 The difference in the  two statements above  is due to the  fact that if $n$ is  allowed to be divisible by high powers of $\ell$, there are mod $\ell$ congruences between characters $\pi_1(X_n) \ra\aQl^*$. We refer the reader to formulas (\ref{Formula-AsymptoticNotInEll}) and  (\ref{Formula-AsymptoticInEll}) in Section~\ref{SubSec-Dihedral} where quotients describing the asymptotics of counting mod $\ell$ versus $\ell$-adic characters are made explicit in a comparable context (though with different constants).

 The difficulty of the conjecture lies in the fact that we do not have a method to produce good lower bounds on  $\overline{T(X,2,q^n,\ell)}$  as $n$ varies. In fact the conjecture may be viewed as an invitation/challenge  to finding a method to  construct ``many''  irreducible representations 
 $\rhobar:\pi_1(X_n) \ra \GL_2(\aFl)$, as $n$ varies. It might be that if  we vary over $n$ that are prime to $\ell$, the number of congruences between representations $\rho:\pi_1(X_n) \ra \GL_2(\aQl)$ is bounded independently of $n$.

  Towards this, given a $\rhobar:\pi_1(X_n) \ra \GL_2(\aFl)$ which remains irreducible on restriction to $\pi_1(\overline{X})$ we make a qualitative (deformation theoretic) study in \S \ref{deformations} of  the cardinality  of the inverse image of $\rhobar$ under the reduction map
$U(X)^{{\rm Frob}_q^m} \ra \overline{U(X)}^{{\rm Frob}_q^m}$, for varying $m \in \N$ such that $n|m$.
We show for example that if we vary through $m$ such that $m \over n$ is prime to $ \ell$ then this cardinality is bounded independently of $m$ (cf. Theorem \ref{Thm-PrimeToEllTower}). We also analyze the case when $m \over n$ is a power of $\ell$ where the answer is very different (see Theorem \ref{Thm-EllPowerTower} 
 for the precise statement). 
 
 In Theorem \ref{Thm-CountingDihedrals} of  \S \ref{SubSec-Dihedral} we count the number of dihedral representations in $\overline{U(X)}^{{\rm Frob}_q^m}$ for varying $m$ which gives a weak lower bound towards the conjecture. 
 
 We end the paper by comparing the situation with counting 2-dimensional mod $p$ representations 
 $\rhobar: \Gal(\overline\Q/\Q)
 \ra \GL_2(\aFl)$ that are irreducible, odd and unramified outside $\ell$ as studied computationally in \cite{Centeleghe}.

{\bf Acknowledgements:} G.B. would like to thank the University of California at Los Angeles for hospitality where parts of this work was carried out. G.B. was supported with the DFG programs FG 1920 and SPP 1489 (in a joint project with FNR Luxembourg).  C.K. would like to thank TIFR  and Universit\"at Heidelberg for their  hospitality, and acknowledge support from a Humboldt Research Award and NSF grants, during the work on the paper.

\section{ Asymptotic results on universal deformation rings}\label{deformations}

Let $\BF$ be a finite field of characteristic $\ell$. For any field $K$ let $G_K=\Gal(\overline K/K)$.

\begin{Lem}\label{Lem-PrimeToEllTower}
Let $G$ be a profinite group that satisfies Mazur's finiteness condition $\Phi_\ell$, let $\bar\rho\colon G \to \GL_m(\BF)$ be a continuous representation, and let $H\subset G$ be a normal open subgroup. Suppose that 
\begin{enumerate}
\item the canonical homomorphism $\BF\to \End_{\bar\rho(H)}(\BF^m)$ is an isomorphism,
\item the restriction map $H^1(G,\ad)\to H^1(H,\ad)$ is an isomorphisms,
\item the restriction map $H^2(G,\ad)\to H^2(H,\ad)$ is injective,
\end{enumerate}
For $?\in\{G,H\}$ denote by $R_?$ the universal deformation ring for deformations of $\bar\rho|_{?}$ in the sense of Mazur -- note that $R_?$ exists by assumption~1. Then the canonical homomorphism $\pi\colon 
R_H\to R_G$ is an isomorphism.
\end{Lem}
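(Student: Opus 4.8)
The plan is to show that $\pi\colon R_H\to R_G$ is an isomorphism by exhibiting an inverse, using the characterization of $R_G$ via tangent spaces and obstruction spaces and the hypotheses on cohomology. First I would recall the standard description of $R_?$ as a quotient of a power series ring: writing $d_i(?) = \dim_\BF H^i(?,\ad)$ for $i=1,2$, Mazur's theory gives a presentation $R_? \cong \WF[[x_1,\dots,x_{d_1(?)}]]/J_?$ where $J_?$ is generated by at most $d_2(?)$ elements, and the reduced tangent space of $R_?$ is canonically $H^1(?,\ad)^\vee$. Hypothesis~1 ensures the deformation functors are pro-representable on both sides, and hypothesis~2 gives that the map on tangent spaces induced by $\pi$, which is dual to the restriction $H^1(G,\ad)\to H^1(H,\ad)$, is an isomorphism. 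By the complete version of Nakayama's lemma for the surjection of power series rings $\WF[[x_1,\dots,x_{d_1(G)}]] \to \WF[[x_1,\dots,x_{d_1(H)}]]$ compatible with the presentations, $\pi$ is surjective.

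Next I would prove injectivity by a deformation-theoretic obstruction argument. Since $\pi$ is surjective, it suffices to show that for every small extension $\tilde A \twoheadrightarrow A$ in $\CC_{\WF}$ and every deformation $\rho_A$ of $\bar\rho|_G$ to $A$, any lift of the restricted deformation $\rho_A|_H$ to $\tilde A$ actually comes from a lift of $\rho_A$ to $\tilde A$; together with hypothesis~2 controlling the set of such lifts, this forces the two universal rings to agree. The key point is the functoriality of the obstruction class: the obstruction to lifting $\rho_A|_H$ lies in $H^2(H,\ad)\otimes I$ (where $I=\ker(\tilde A\to A)$) and is the image under $H^2(G,\ad)\to H^2(H,\ad)$ of the obstruction to lifting $\rho_A$. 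Hypothesis~3, the injectivity of $H^2(G,\ad)\to H^2(H,\ad)$, then says: $\rho_A|_H$ lifts to $\tilde A$ if and only if $\rho_A$ lifts to $\tilde A$. Combined with hypothesis~2 (which makes the torsor of lifts of $\rho_A$ and the torsor of lifts of $\rho_A|_H$ have the same size once one lift exists), this gives a bijection between lifts on the $G$-side and lifts on the $H$-side at every stage of a small-extension filtration.

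I would then assemble these into a proof that $\pi$ is an isomorphism: inducting up a tower $\WF = A_0 \leftarrow A_1 \leftarrow \cdots$ of small extensions approximating a given Artinian quotient of $R_H$, one lifts the tautological $R_H$-valued deformation of $\bar\rho|_H$ step by step to a compatible system of deformations of $\bar\rho|_G$, using the two previous paragraphs at each step to guarantee both existence and uniqueness of the lift. Passing to the limit produces a deformation of $\bar\rho|_G$ over (a quotient tower of) $R_H$, hence a homomorphism $R_G \to R_H$ by universality, and one checks it is inverse to $\pi$ on the level of the defining deformations, so by Yoneda it is a genuine inverse. Finally, I would note that $\Phi_\ell$ guarantees $H^1$ is finite-dimensional so that all the rings are Noetherian and the power-series presentations are finite, which is what makes the completed Nakayama argument legitimate.

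The main obstacle I anticipate is the bookkeeping in the injectivity step: one must be careful that the obstruction class for lifting $\rho_A$ over $G$ is genuinely \emph{functorial} in the restriction-to-$H$ map — i.e. that it is computed by the same cocycle recipe on both sides and that restriction of cochains induces the map $H^2(G,\ad)\to H^2(H,\ad)$ appearing in hypothesis~3 — and, relatedly, that the $H^1(G,\ad)$-action on the lifts of $\rho_A$ is compatible via restriction with the $H^1(H,\ad)$-action on the lifts of $\rho_A|_H$, so that hypothesis~2 really does force equal cardinalities of lift-torsors. Once functoriality of both the obstruction and the torsor structure is set up cleanly, the rest is a formal induction; so I would spend the bulk of the writeup on making that functoriality precise, perhaps by invoking the explicit two-cocycle description of obstructions in Mazur's deformation theory.
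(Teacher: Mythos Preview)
Your proposal is correct and uses the same two cohomological ingredients as the paper (hypothesis~2 for tangent spaces/lift-torsors, hypothesis~3 for obstructions), but the organization differs. You build an explicit inverse $s\colon R_G\to R_H$ by inducting up a filtration of $R_H$ by small extensions, using the obstruction and torsor comparisons at every step and then checking the result is inverse to $\pi$. The paper instead argues by contradiction on a \emph{single} small extension: having shown $\pi$ surjective, it supposes $I=\ker\pi\neq0$, chooses $\Fn\subset I$ with $I/\Fn\cong\BF$, and considers the obstruction in $H^2(G,\ad)$ to lifting the universal $\rho_G$ across $R_H/\Fn\twoheadrightarrow R_G$. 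Restriction to $H$ kills this class (the lift $\rho_H\bmod\Fn$ is already there), so hypothesis~3 forces the obstruction to vanish, giving a ring section $R_G\to R_H/\Fn$; but then $R_H/\Fn\to R_G$ would have to be an isomorphism on cotangent spaces while having nonzero kernel, contradicting hypothesis~2.

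What each buys: the paper's route is shorter and sidesteps exactly the bookkeeping you flagged as the main obstacle (compatibility of the inductive lifts across different filtrations, naturality of the inverse), since only one lifting problem is ever posed. Your route, on the other hand, actually proves the stronger functorial statement that restriction $\mathrm{Def}_G\to\mathrm{Def}_H$ is an isomorphism of functors, not just of representing rings, and would generalize more readily to settings where one wants the inverse explicitly. Both are standard moves in deformation theory; the paper's is the more economical choice here.
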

\begin{proof}
Observe first that by the injectivity in assumption~2, the map on mod $\ell$ cotangent spaces induced from $\pi$ is surjective. Because $R_H$ and $R_G$ are complete noetherian local with residue field $\BF$, it follows that $\pi$ is surjective. Denote by $I$ the kernel of $\pi$. 
If $I$ is non-zero, we choose an ideal $\Fn$ of $R_H$ contained in $I$ such that $I/\Fn\cong\BF$ and consider the induced short exact sequence
\begin{equation}\label{RHtoRG}
0\to I/\Fn\to R_H/\Fn\to R_G\to0.
\end{equation}
There is an obstruction class $\theta$ in $H^2(G,\ad)$ to lifting the universal deformation $\rho_G\colon G\to \GL_n(R_G)$ from $R_G$ to $R_H/\Fn$. Upon restriction to $H$ this class vanishes, and so from assumption~3 we deduce $\theta=0$. It follows that the sequence (\ref{RHtoRG}) is split. But then the homomorphism on cotangent spaces induced from $\pi$ is not an isomorphism, contradicting assumption~2. Thus $I$ is zero and $\pi$ is an isomorphism.
\end{proof}

\begin{Rem}
If $\ell$ does not divide $m$, there is an obvious variant of Lemma~\ref{Lem-PrimeToEllTower} for deformations with fixed determinant, whose formulation, we leave to the reader.
\end{Rem}

\begin{Thm}\label{Thm-PrimeToEllTower} 
Let $\bar\rho\colon\pi_1(X)\to \GL_m(\BF)$ be a continuous representation that is absolutely irreducible when restricted to $\pi_1(\overline X)$. For each $n\ge1$ denote by $\rho_n\colon \pi_1(X_n)\longto \GL_m(R_{n})$ the universal deformation in the sense of Mazur of the restriction $\bar\rho|_{\pi_1(X_n)}$. Then there exists an $n_0$ prime to $\ell$ such that for any multiple $n$ of $n_0$ with $n$ prime to $\ell$, the canonical homomorphism $R_{n}\to R_{n_0}$ is an isomorphism.
\end{Thm}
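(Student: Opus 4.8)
The plan is to reduce Theorem~\ref{Thm-PrimeToEllTower} to an application of Lemma~\ref{Lem-PrimeToEllTower}, so the task is to produce an $n_0$, prime to $\ell$, for which the three cohomological hypotheses of that lemma hold for the pair $H=\pi_1(X_n)\subset G=\pi_1(X_{n_0})$ for \emph{every} multiple $n$ of $n_0$ with $(n,\ell)=1$. Here we take $\bar\rho$ restricted to the various $\pi_1(X_m)$; note all these groups contain $\pi_1(\overline X)$ as a normal subgroup with pro-cyclic quotient, and they satisfy Mazur's condition $\Phi_\ell$ since $X$ is a curve over a finite field (so the relevant $H^1$'s are finite). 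The absolute irreducibility of $\bar\rho|_{\pi_1(\overline X)}$ immediately gives hypothesis~1 of the lemma (indeed $\End_{\bar\rho(\pi_1(\overline X))}(\BF^m)=\BF$, and a fortiori for the larger image $\bar\rho(\pi_1(X_n))$), and it guarantees that all the deformation rings $R_m$ exist.

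First I would set up the inflation--restriction machinery. For any $m_0\mid m$ put $\Gamma_{m/m_0}=\pi_1(X_{m_0})/\pi_1(X_m)\cong\Gal(\F_{q^m}/\F_{q^{m_0}})=\bbZ/(m/m_0)$, a finite cyclic group. The Hochschild--Serre spectral sequence for $\pi_1(X_m)\triangleleft\pi_1(X_{m_0})$ gives the exact sequences relating $H^i(\pi_1(X_{m_0}),\ad)$ to the $\Gamma_{m/m_0}$-invariants of $H^i(\pi_1(X_m),\ad)$, with error terms $H^j(\Gamma_{m/m_0},H^i(\pi_1(X_m),\ad))$. The key point is that when $(m/m_0,\ell)=1$, the group $\Gamma_{m/m_0}$ has order prime to $\ell$ and $\ad$ is an $\BF$-vector space with $\BF$ of characteristic $\ell$, so all higher cohomology $H^j(\Gamma_{m/m_0},-)$ with $j\ge1$ of any $\BF[\Gamma_{m/m_0}]$-module vanishes. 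Hence for such towers the spectral sequence degenerates and one gets, for all $i$, that restriction $H^i(\pi_1(X_{m_0}),\ad)\to H^i(\pi_1(X_m),\ad)$ is injective with image the $\Gamma_{m/m_0}$-invariants; in particular hypothesis~3 of the lemma holds automatically and hypothesis~2 becomes the assertion that $\Gamma_{m/m_0}$ acts \emph{trivially} on $H^1(\pi_1(X_m),\ad)$.

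So the real content is: choose $n_0$ prime to $\ell$ such that for every multiple $n$ of $n_0$ with $(n,\ell)=1$, the Frobenius $\Frob_{q^{n_0}}$ acts trivially on $H^1(\pi_1(X_n),\ad)$. Here I would analyze $H^1(\pi_1(X_n),\ad)$ via the Hochschild--Serre sequence for $\pi_1(\overline X)\triangleleft\pi_1(X_n)$: since $\bar\rho|_{\pi_1(\overline X)}$ is absolutely irreducible, $H^0(\pi_1(\overline X),\ad)=\BF$ carries the trivial Frobenius action, and $H^1(\pi_1(X_n),\ad)$ sits in an exact sequence involving $H^1(\hat\bbZ^{(p')}/n\bbZ,\BF)$ (a torsion group, killed once we are in the prime-to-$\ell$ part appropriately) and the Frobenius-invariants $\big(H^1(\pi_1(\overline X),\ad)\big)^{\Frob_{q^n}}$. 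The crucial finiteness input is that $H^1(\pi_1(\overline X),\ad)$ is a \emph{finite} $\BF$-vector space on which $\Frob_q$ acts; therefore the $\Frob_q$-action factors through a finite quotient, i.e. $\Frob_q$ acts on this finite-dimensional space as an element of finite order, say $d$. Taking $n_0$ to be any multiple of $d$ that is prime to $\ell$ (possible since we may always enlarge $d$ by prime-to-$\ell$ factors, or simply replace $d$ by its prime-to-$\ell$ part after noting the $\ell$-part of the order acts unipotently on an $\BF$-space and hence, being of $\ell$-power order on a space killed by $\ell$, is in fact of order dividing a bounded power of $\ell$ — one then absorbs it), $\Frob_{q^{n_0}}=\Frob_q^{n_0}$ acts trivially on $H^1(\pi_1(\overline X),\ad)$, and a short diagram chase propagates triviality of the $\Gal(\F_{q^n}/\F_{q^{n_0}})$-action to all of $H^1(\pi_1(X_n),\ad)$ for $n$ a prime-to-$\ell$ multiple of $n_0$. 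Then hypothesis~2 holds, and Lemma~\ref{Lem-PrimeToEllTower} applied to $H=\pi_1(X_n)\subset G=\pi_1(X_{n_0})$ yields $R_n\xrightarrow{\sim}R_{n_0}$.

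I expect the main obstacle to be the bookkeeping around the $\ell$-part of the Frobenius action on $H^1(\pi_1(\overline X),\ad)$: since we are forced to keep $n_0$ prime to $\ell$, we cannot simply take $n_0$ to be the full order of $\Frob_q$ on that space if that order has an $\ell$-factor. The resolution is that on a finite $\BF$-vector space (with $\mathrm{char}\,\BF=\ell$) the prime-to-$\ell$ part of a finite-order automorphism is diagonalizable over $\overline\BF$ and the $\ell$-part is unipotent; one checks that after raising to a suitable prime-to-$\ell$ power the action becomes unipotent, and then a further argument (using that the relevant cohomology groups stabilize, or directly that a unipotent automorphism of finite order on an $\BF$-space of bounded dimension has $\ell$-power order, combined with the constraint $(n,\ell)=1$) shows the contribution is harmless — precisely, one shows that for $(n,\ell)=1$ the invariants and coinvariants of $\Gal(\F_{q^n}/\F_{q^{n_0}})$ on the cohomology agree in the needed range. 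The other hypotheses, and the reduction via Lemma~\ref{Lem-PrimeToEllTower}, are then formal.
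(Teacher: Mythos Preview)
Your strategy is essentially the paper's: both reduce to Lemma~\ref{Lem-PrimeToEllTower} via Hochschild--Serre and choose $n_0$ from the Frobenius action on the finite geometric cohomology $H^j(\pi_1(\overline X),\ad)$. Your organization differs slightly in that you first exploit the prime-to-$\ell$ order of $\Gamma_{n/n_0}=\Gal(\BF_{q^n}/\BF_{q^{n_0}})$ to collapse the spectral sequence for $\pi_1(X_n)\triangleleft\pi_1(X_{n_0})$, which makes hypothesis~3 immediate and reduces hypothesis~2 to the assertion that $\Gamma_{n/n_0}$ acts \emph{trivially} on $H^1(\pi_1(X_n),\ad)$. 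The paper instead compares the Hochschild--Serre sequences for $\pi_1(\overline X)\triangleleft\pi_1(X_m)$ at $m=n_0$ and $m=n$ directly, via the explicit maps $\tau=1+\Phi^{n_0}+\cdots+\Phi^{n-n_0}$ on coinvariants and $\iota$ on invariants, and shows both are isomorphisms.

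The one place your argument needs sharpening is the $\ell$-part, which you correctly flag but do not quite resolve. You cannot arrange that $\Phi^{n_0}$ acts \emph{trivially} on $H^1(\pi_1(\overline X),\ad)$ while keeping $n_0$ prime to~$\ell$; the correct choice is the minimal $n_0$ with $\Phi^{n_0}$ \emph{unipotent} (equivalently the prime-to-$\ell$ part of the order of $\Phi$ on this finite $\BF$-space, so automatically prime to~$\ell$). The clean closing step, which you gesture at, is then: on the subspace $H^1(\pi_1(\overline X),\ad)^{\Phi^n}$ the operator $\Phi^{n_0}$ is both unipotent (hence of $\ell$-power order) and satisfies $(\Phi^{n_0})^{n/n_0}=\id$ with $n/n_0$ prime to~$\ell$, so it is the identity there. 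Since $\BF[\Gamma_{n/n_0}]$ is semisimple, triviality on the two outer terms of the short exact sequence forces triviality on $H^1(\pi_1(X_n),\ad)$. This is exactly the paper's observation that $\iota$ is an isomorphism (kernels of $1-\Phi^{n_0}$ and $1-\Phi^n$ coincide because $1-\Phi^n=(1-\Phi^{n_0})\tau$ with $\tau$ invertible); your vanishing of $H^{>0}(\Gamma_{n/n_0},-)$ plays the role of the paper's argument that $\tau$ is an isomorphism. Your final sentence about ``invariants and coinvariants agreeing'' is not the right formulation --- that is automatic for prime-to-$\ell$ groups --- what you need is the triviality statement above.
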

\begin{proof}
We wish to apply Lemma~\ref{Lem-PrimeToEllTower} with $G= \pi_1(X_{n_0})$ and $H= \pi_1(X_{n})$ for $n_0|n$. Because $\ell$ is different from $p$, the group $\pi_1(\overline X)$ satisfies Mazur's condition $\Phi_\ell$, and hence so does  $\pi_1(X_n)$ for every $n$; this also implies that the groups $H^j(\pi_1(\overline X),\ad)$ below are finite-dimensional vector spaces over~$\BF$. Since $\bar\rho|_{\pi_1(X_n)}$ is absolute irreducible, it remains to verify assumptions 2 and 3 of Lemma~\ref{Lem-PrimeToEllTower}, for $n_0$ chosen suitably.

We begin with an application of the Hochschild-Serre spectral sequence to $0\to \pi_1(\overline X)\to \pi_1(X_n)\to G_{\BF_{q^n}}\to 1$. Since $G_{\BF_{q^n}}\cong\hat\BZ$ has cohomological dimension $1$, we obtain for all $j\ge0$ a short exact sequence 
\[ 0\to H^1(G_{\BF_{q^n}},H^j(\pi_1(\overline X),\ad)) \to H^{j+1}(\pi_1(X_n),\ad) \to H^0(G_{\BF_{q^n}},H^j(\pi_1(\overline X),\ad)) \to 0.\]
To have shorter formulas, we abbreviate $\Phi=\Frob_q$. Recall that for a $G_{\BF_{q^n}}$-module $M$ the cohomology $H^i(G_{\BF_{q^n}},M)$ for $i=0,1$ is isomorphic to the $G_{\BF_{q^n}}$ invariants and covariants of $M$ respectively. They are given as the $\Phi^n$ fixed points $M^{\Phi^n}$ of $M$ and the quotient $M/(1-\Phi^n):=M/(1-\Phi^n)M$, respectively. Restriction from $G_{\BF_{q^{n_0}}}$ to $G_{\BF_{q^n}}$ on cohomology is then described by the following explicit diagram
\[\xymatrix{
0\ar[r] & H^j(\pi_1(\overline X),\ad)/(1-\Phi^{n_0}) \ar[r]\ar[d]^\tau & H^{j+1}(\pi_1(X_{n_0}),\ad) \ar[r]\ar[d]^{\res} & H^j(\pi_1(\overline X),\ad)^{\Phi^{n_0}} \ar[r]\ar[d]^\iota & 0\\
0\ar[r] & H^j(\pi_1(\overline X),\ad)/(1-\Phi^n) \ar[r]& H^{j+1}(\pi_1(X_n),\ad) \ar[r]& H^j(\pi_1(\overline X),\ad)^{\Phi^n} \ar[r]& 0\rlap{,}\\
}\]
where $\iota$ is the inclusion homomorphism and $\tau$ is induced from the action of $1+\Phi^{n_0}+\ldots+\Phi^{n_0(n/n_0-1)}$ on $H^j(\pi_1(\overline X),\ad)$. The action of $\Phi$ on $H^j(\pi_1(\overline X),\ad)$ is a linear automorphism. Since $\ad$ is finite of characteristic $\ell$ we may choose $n'\ge1$ such that $\Phi^{n'}$ is unipotent. Let $n_0$ denote the minimal such choice. Suppose now that $n/n_0$ is not divisible by $\ell$. Then the number of summands of $\tau$ is of order prime to $\ell$, and since they are unipotent and commute with each other, we deduce that $\tau$ is an isomorphism. Observing further that $1-\Phi^{n}=(1-\Phi^{n_0})\tau$, we see that the kernels of $1-\Phi^n$ and $1-\Phi^{n_0}$ agree, and so also $\iota$ is an isomorphism. The assertion of Theorem~\ref{Thm-PrimeToEllTower} now follows from Lemma~\ref{Lem-PrimeToEllTower}.
\end{proof}
\begin{Rem}
The above theorem is an affirmative solution to a question of Hida raised over number fields. He asks whether in prime-to-$\ell$ (cyclotomic) towers the deformation rings of residual representations stabilize. This question is being investigated by Geunho Gim in his UCLA PhD thesis.
\end{Rem}
\begin{Cor}\label{Cor-PrimeToEllTower}
Suppose we are in the situation of Theorem~\ref{Thm-PrimeToEllTower}. Denote by $L_{\bar\rho,n}$ the set of lifts up to isomorphism of $\bar\rho$ to an $\ell$-adic representation. Then $L_{\bar\rho,n}$ is independent of $n$ as long als $n_0|n$ and $n/n_0$ is not divisible by $\ell$, i.e., restriction from $\pi_1(X_{n_0})$ to $\pi_1(X_{n})$ defines a bijection $L_{\bar\rho,n_0}\to L_{\bar\rho,n}$ for any such~$n$.
\end{Cor}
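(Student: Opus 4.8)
The plan is to identify, functorially in $n$, the set $L_{\bar\rho,n}$ of $\ell$-adic lifts of $\bar\rho|_{\pi_1(X_n)}$ up to isomorphism with the set $\Hom^{\mathrm{loc}}_{W(\BF)}(R_n,\overline{\BZ}_\ell)$ of continuous local $W(\BF)$-algebra homomorphisms, and then to invoke the isomorphism $R_n\to R_{n_0}$ from Theorem~\ref{Thm-PrimeToEllTower}. I would begin by recording the geometric setup: for $n_0\mid n$ the covering $X_n\to X_{n_0}$ presents $\pi_1(X_n)$ as an open subgroup of $\pi_1(X_{n_0})$ containing $\pi_1(\overline X)$, so $\bar\rho|_{\pi_1(X_n)}$ is again absolutely irreducible and satisfies $\Phi_\ell$ (whence $R_n$ exists, as already used in the proof of Theorem~\ref{Thm-PrimeToEllTower}), and restriction of a lift along $\pi_1(X_n)\subset\pi_1(X_{n_0})$ visibly descends to a well-defined map $L_{\bar\rho,n_0}\to L_{\bar\rho,n}$ on isomorphism classes.

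The substantive step is the identification $L_{\bar\rho,n}\cong\Hom^{\mathrm{loc}}_{W(\BF)}(R_n,\overline{\BZ}_\ell)$. Since $\bar\rho|_{\pi_1(X_n)}$ is absolutely irreducible, Schur's lemma shows that any isomorphism between two $\ell$-adic lifts of it reduces modulo the maximal ideal to a scalar, which may be absorbed into a central unit; hence isomorphism of lifts coincides with strict equivalence in the sense of Mazur. As $\pi_1(X_n)$ is topologically finitely generated, a continuous lift $\pi_1(X_n)\to\GL_m(\overline{\BZ}_\ell)$ takes values in $\GL_m(\cO')$ for some finite extension $\cO'$ of $W(\BF)$ inside $\overline{\BZ}_\ell$, with residue field $k'\supseteq\BF$, and it reduces to $\bar\rho\otimes_\BF k'$. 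Using that, under absolute irreducibility, forming the universal deformation ring commutes with extending the residue field, i.e. $R_{\bar\rho\otimes_\BF k'}\cong R_n\,\hat\otimes_{W(\BF)}W(k')$, one checks that $\varphi\mapsto\varphi_*\rho_n$ sets up a bijection from $\Hom^{\mathrm{loc}}_{W(\BF)}(R_n,\overline{\BZ}_\ell)$ onto $L_{\bar\rho,n}$. Moreover this bijection is natural: the canonical map $R_n\to R_{n_0}$ is characterised by the property that the pushforward of $\rho_n$ along it is strictly equivalent to $\rho_{n_0}|_{\pi_1(X_n)}$, so for $x\colon R_{n_0}\to\overline{\BZ}_\ell$ the lift $x_*\rho_{n_0}$ restricts on $\pi_1(X_n)$ to $(x\circ(R_n\to R_{n_0}))_*\rho_n$; thus precomposition with $R_n\to R_{n_0}$ corresponds precisely to the restriction map $L_{\bar\rho,n_0}\to L_{\bar\rho,n}$.

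Granting the above, the corollary follows formally: Theorem~\ref{Thm-PrimeToEllTower} gives an $n_0$ prime to $\ell$ such that $R_n\to R_{n_0}$ is an isomorphism for every multiple $n$ of $n_0$ with $n$ prime to $\ell$, and applying the contravariant functor $\Hom^{\mathrm{loc}}_{W(\BF)}(-,\overline{\BZ}_\ell)$ converts this into a bijection $L_{\bar\rho,n_0}\to L_{\bar\rho,n}$, which by the second paragraph is the restriction map. The one place needing genuine care -- the main obstacle, such as it is -- is the coefficient-ring bookkeeping in the middle paragraph: making precise what an ``$\ell$-adic lift up to isomorphism'' is when the residue field is allowed to grow, verifying the residue-field base-change property of $R_n$, and pinning down the reduction of isomorphism to strict equivalence; once these are settled, everything else is a direct consequence of Theorem~\ref{Thm-PrimeToEllTower}.
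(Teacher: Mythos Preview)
Your proof is correct and follows exactly the approach the paper intends: the paper gives no proof of this corollary at all, treating it as an immediate consequence of Theorem~\ref{Thm-PrimeToEllTower}, and your argument spells out precisely the standard identification of $L_{\bar\rho,n}$ with $\Hom^{\mathrm{loc}}_{W(\BF)}(R_n,\overline{\BZ}_\ell)$ together with the naturality that makes restriction correspond to precomposition with $R_n\to R_{n_0}$. The coefficient-ring bookkeeping you flag (reducing isomorphism to strict equivalence via Schur, and the residue-field base change $R_{\bar\rho\otimes_\BF k'}\cong R_n\,\hat\otimes_{W(\BF)}W(k')$) is handled correctly.
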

Another way to rephrase the corollary is to say that the number of congruences to a fixed $\bar\rho$ is constant for all multiples $n$ of $n_0$ as long as $n/n_0$ is of order prime to $\ell$.

\medskip

Theorem~\ref{Thm-PrimeToEllTower} concerns prime-to-$\ell$ towers. The following lemma prepares the analysis of the general situation.
\begin{Lem}\label{Lem-EllPowerTower}
Let $1\to H\to G\stackrel\pi\to Z\to 1$ be a short exact sequence of profinite groups such that $Z$ is procyclic, torsion free. Let $s\colon Z\to G$ denote a splitting of $\pi$. Suppose $H$ satisfies Mazur's finiteness condition $\Phi_\ell$. Let $\bar\rho\colon G \to \GL_m(\BF)$ be a continuous representation of degree $m$ prime to~$\ell$ such that the natural map $\BF\to \End_H(\BF^m)$ is an isomorphism.
For any closed subgroup $H'$ of $G$ containing $H$ denote by $R_{H'}^0$ the universal ring for deformations of $\bar\rho|_{H'}$ whose determinant is the Teichm\"uller lift of $\det\bar\rho|_{H'}$. Then
\begin{enumerate}
\item 
The canonical homomorphism $\can_{H,G}\colon R^0_H\to R^0_G$ is surjective.
\item 
Let $\rho_H\colon H\to \GL_m(R_H^0)$ be the universal deformation represented by $R_H^0$. For $\sigma\in Z$ define $\rho_H^\sigma\colon H\to \GL_m(R_H^0),h\mapsto \rho_H(s(\sigma) h s(\sigma)^{-1})$. There is a unique homomorphism $\alpha_\sigma\colon R_H^0\to R_H^0$ such that
\begin{equation}\label{Diag-CommUpToConj}
\xymatrix@C+2pc@R-1pc{
&\GL_m(R_H^0)\ar[dd]^{\alpha_\sigma}\\
H\ar[ur]^{\rho_H}\ar[dr]_{\rho^\sigma_H}&\\
&\GL_m(R_H^0)\rlap{.}\\
}
\end{equation}
commutes up to conjugation, and, if $I\subset R_H^0$ denotes the ideal generated by $\{r-\alpha_\sigma(r)\mid r\in R_H^0,\sigma\in Z\}$, then $R_H^0 \to R_G^0$ factors via $R^0_H/I$ and the induced map $R_H^0/I\to R_G^0$ is an isomorphism.
\item The canonical map $R_H^0\to \invlim_{U} R_{\pi^{-1}(U)}^0$ is an isomorphism where $U\!\subseteq \!Z$ ranges over all open subgroups.
\item If for all open subgroups $U\subset Z$ the ring $R_{\pi^{-1}(U)}^0$ is flat over $\BZ_\ell$ and reduced, then $\dirlim_U \Spec R_{\pi^{-1}(U)}^0[1/\ell]$ is Zariski dense in $\Spec R_H^0$, the rings $R_H^0[1/\ell]$ and $R_H^0$ are reduced, and $R_H^0$ is flat over $\BZ_\ell$.
\item \label{Part5}The conjugation action of $Z$ on $H/\kernel(\rho_H)$ via $s$ factors via a 
virtual pro-$\ell$ quotient~$\bar Z$ of~$Z$.
\item If the conjugation action of $Z$ on $H/\kernel(\rho_H)$ via $s$ is trivial, then $R_H^0\to R_G^0$ is an isomorphism.
\end{enumerate}
\end{Lem}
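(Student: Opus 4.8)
The plan is to derive the final assertion (part~6) formally from part~2 of Lemma~\ref{Lem-EllPowerTower}; the only thing to check is that the triviality hypothesis forces the ideal $I$ occurring there to vanish. First I would spell out the hypothesis. That the conjugation action of $Z$ on $H/\kernel(\rho_H)$ via $s$ is trivial means precisely that $s(\sigma)hs(\sigma)^{-1}h^{-1}\in\kernel(\rho_H)$ for all $\sigma\in Z$ and $h\in H$; since $\rho_H$ is a homomorphism, this is equivalent to $\rho_H\bigl(s(\sigma)hs(\sigma)^{-1}\bigr)=\rho_H(h)$ for all $h\in H$, i.e.\ to the equality $\rho_H^\sigma=\rho_H$ of homomorphisms $H\to\GL_m(R_H^0)$ in the notation of part~2. (This presupposes that $\kernel(\rho_H)$ is stable under conjugation by $s(Z)$, which is automatic: $s(\sigma)^{-1}\kernel(\rho_H)s(\sigma)=\kernel(\rho_H^\sigma)=\kernel(\alpha_\sigma\circ\rho_H)\supseteq\kernel(\rho_H)$ for every $\sigma$, and applying this to $\sigma^{-1}$ as well forces equality.)

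Next I would feed the equality $\rho_H^\sigma=\rho_H$ into part~2. For each $\sigma\in Z$ the identity morphism $\id_{R_H^0}$ makes diagram~\eqref{Diag-CommUpToConj} commute --- indeed strictly, hence in particular up to conjugation --- so by the uniqueness clause of part~2 we get $\alpha_\sigma=\id_{R_H^0}$ for every $\sigma\in Z$. Therefore the ideal $I=\bigl(r-\alpha_\sigma(r)\mid r\in R_H^0,\ \sigma\in Z\bigr)$ is the zero ideal, and the last assertion of part~2 states that $R_H^0\to R_G^0$ factors through $R_H^0/I=R_H^0$ with the induced map an isomorphism; that is, $R_H^0\to R_G^0$ is an isomorphism.

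I do not expect any serious obstacle here: the real content --- existence of the maps $\alpha_\sigma$, the identification $R_H^0/I\xrightarrow{\ \sim\ }R_G^0$, and the way the standing hypotheses $\ell\nmid m$ and $\BF\xrightarrow{\ \sim\ }\End_H(\BF^m)$ enter --- has already been packaged into part~2. The one subtle point worth flagging when writing out the details is that the uniqueness of $\alpha_\sigma$ in part~2 must be uniqueness among \emph{all} local homomorphisms making \eqref{Diag-CommUpToConj} commute up to conjugation (this is where $\End_H(\BF^m)=\BF$ is used, via Schur's lemma, to rigidify the conjugating matrix modulo scalars); granting that, reading off $\alpha_\sigma=\id_{R_H^0}$ from $\rho_H^\sigma=\rho_H$ is immediate, and part~6 is genuinely just the special case $I=0$ of part~2.
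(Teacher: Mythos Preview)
Your proposal is correct and follows exactly the same route as the paper: the paper's proof of part~6 also reduces to showing $\alpha_\sigma=\id_{R_H^0}$ for all $\sigma$, observing that the hypothesis gives $\rho_H^\sigma=\rho_H$, whence $I=0$ and part~2 finishes the argument. Your write-up is in fact more careful than the paper's one-line version, and the parenthetical on stability of $\kernel(\rho_H)$ under $s(Z)$-conjugation, together with your remark on what uniqueness of $\alpha_\sigma$ really entails, are useful clarifications.
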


\begin{proof}
Let us first observe that indeed a continuous splitting $s\colon Z\to G$ exists. In the same way that one proves that the profinite completion $\hat\BZ$ is isomorphic to $\prod_{\ell'}\BZ_{\ell'}$ where $\ell'$ ranges over all prime numbers, one shows $Z\cong \prod_{\ell'}Z_{\ell'}$ for the procyclic group $Z$ where $Z_{\ell'}$ is the pro-$\ell'$ completion of $Z$. Because $Z$ is torsion free, each $Z_{\ell'}$ is either trivial or isomorphic to $\BZ_{\ell'}$. Denote by $g$ an element of $G$ whose image in $Z$ is a topological generator, and denote by $Z'$ the profinite completion of $g^\BZ$ in $G$. Then with similar notation as above we have 
$Z'\cong \prod_{\ell'}Z'_{\ell'}$, and moreover the surjective map $Z'\to Z$ is a surjection on each component. From the local shape of the $Z_\ell$, we see that we can choose $h$ inside $Z'\subset G$ such that the completion of $h^\BZ$ maps isomorphically to $Z$. 
The inverse is a continuous section as wanted. 

To prove part 1 it suffices to prove injectivity for the induced map on mod $\ell$ tangent spaces, i.e.\ for the restriction homomorphism $ H^1(G,\ad^0)\to H^1(H,\ad^0)$, which is 
part of the inflation restriction sequence
\begin{equation}\label{SES-InflRes}
 0\to H^1(G/H,H^0(H,\ad^0))\to H^1(G,\ad^0)\to H^1(H,\ad^0)^{G/H}\to H^2(G/H,H^0(H,\ad^0)).
\end{equation}
Because $\ell$ is prime to $m$, the isomorphism $\BF\to \End_H(\BF^m)$ yields $H^0(H,\ad^0)=0$, and part 1 follows.

We next prove part 2, which follows the proof of~\cite[Lem.~3.13]{deJong}. Let $\Fm_H$ denote the maximal ideal of $R_H^0$. For each $\sigma\in Z$ choose $A_\sigma\in\GL_m(R_H^0)$ whose reduction modulo $\Fm_H$ is equal to $\bar\rho(s(\sigma))$. Then $h\mapsto A_\sigma\rho^\sigma_H(h)A_\sigma^{-1}, H\to \GL_m(R_H^0),$ is a deformation of $\bar\rho|_H$ of determinant equal to the Teichm\"uller lift of $\det\bar\rho|_H$. Hence there exists a unique homomorphism $\alpha_\sigma\colon R_H^0\to R_H^0$ such that (\ref{Diag-CommUpToConj}) commutes up to conjugacy with $A_\sigma\rho^\sigma_H(\underline{\phantom{n}})A_\sigma^{-1}$ in place of $\rho_H^\sigma$. This proves the existence of $\alpha_\sigma$. Its uniqueness is a consequence of the universality of $R_H^0$. Observe that $\alpha_\sigma$ is an isomorphism as can be checked on tangent~spaces.

Next we show that the canonical map $\can_{H,G}\colon R_H^0\to R_G^0$ factors via $R_H^0/I$: denote by $\rho_G\colon G\to \GL_m(R_G^0)$ the universal deformation for $R_G^0$. Then for all $\sigma\in Z$ one has $\rho_G^\sigma=B_\sigma\rho_GB_\sigma^{-1}$ with $B_\sigma=\rho_G(s(\sigma))$. One deduces that $\can_{H,G}\circ\alpha_\sigma=\can_{H,G}$, which shows that $\can_{G,H}$ factors via $R_H^0/I$. The map induced on tangent spaces from $R_H^0/I\to R_G^0$ is described by the central arrow in (\ref{SES-InflRes}), which we know to be an isomorphism. To see that $R_H^0/I\to R_G^0$ is an isomorphy it thus suffices to 
construct a deformation $\hat\rho_H\colon G \to \GL_m(R_H^0/I)$ of $\bar\rho$ that extends $\rho_H$, since the latter yields a splitting of $R_H^0/I\to R_G^0$. 
For the details, we refer to the proof of \cite[Lem.~3.13]{deJong}. Alternatively, one can obtain $\hat\rho_H$ by extending the results of \cite[Subsec.~4.3.5]{HidaModFormsAndGalCoh} to a profinite setting: since $\bar\rho$ is given as a representation of $G$, the obstruction described in \cite[Thm.~4.35(5)]{HidaModFormsAndGalCoh} will lie in $H^2(Z,1+\Fm_H)$; the multiplicative group $1+\Fm_H$ is pro-$\ell$ and the mod $\ell$ cohomological dimension of the procyclic, torsion free group $Z$ is at most~$1$. 

For part 3, let $I_U$ denote the ideal of $R_H^0$ defined in item $2$ if one replaces $G$ by $\pi^{-1}(U)$. By part 2, it remains to prove that $\bigcap_{U}I_U=0$ where the intersection is over all open subgroups $U$ of $Z$. For this it suffices to show that for all $t>0$ there is a $U$ such that $\Fm_H^t  \supset  I_U$. To see the latter, we consider the quotient
\[  \rho_H/\Fm_H^t \colon H \to\GL_m(R_H /\Fm_H^t).\]
This representation factors via a finite quotient, say $H_t$, of $H$. Because the action of $Z$ on $H_t$ via $s$ and conjugation is continuous, there is an open subgroup $U$ of $Z$ such that $U$ acts trivially on $H_t$. This implies that $\alpha_\sigma\mod {\Fm_t}$ is the identity for all $z\in U$ and hence that $I_U$ is contained in $\Fm_H^t$.

To see 4, let $J\subset R_H^0$ be the radical ideal such that the Zariski closure of $\dirlim_U \Spec R_{\pi^{-1}(U)}^0[1/\ell]$ in $ \Spec R_H^0$ agrees with $\Spec R_H^0/J$. Because each $R_{\pi^{-1}(U)}^0$ is reduced the kernel of the canonical map $R_H^0\to R_{\pi^{-1}(U)}^0[1/\ell]$ contains $J$. But the map factors via $R_H^0\to R_{\pi^{-1}(U)}^0\to R_{\pi^{-1}(U)}^0[1/\ell]$, and by flatness of $R_{\pi^{-1}(U)}^0[1/\ell]$ over $\BZ_\ell$, the ideal $J$ lies in the kernel of $R_H^0\to R_{\pi^{-1}(U)}^0$. From the proof of part 3, we deduce that $J=0$. This proves the density assertion as well as the reducedness of  $R_H^0$ and $R_H^0[1/\ell]$. The vanishing of $J$ also implies that $R_H^0\to R_H^0[1/\ell]$ is injective, and this proves the flatness of $R_H^0$ over $\BZ_\ell$.

To prove \ref{Part5}, let $P^\ell$ be the kernel of the homomorphism from $\kernel(\bar\rho|_H)$ 
to its pro-$\ell$ completion. Then $P^\ell$ is a characteristic subgroup of $H$, that is closed. The quotient $H/P^\ell$ is thus a virtual pro-$\ell$ group that satisfies Mazur finiteness condition $\Phi_\ell$; moreover it surjects onto $\kernel(\rho_H)/H$. By \cite[Ch.~5]{DixonDuSautoyMannSegal} the automorphism group $\Aut(H/P^\ell)$ is in a natural way a profinite group that is virtually pro-$\ell$. Conjugation by elements of $Z$ via $s$ defines a closed subgroup of $\Aut(H/P^\ell)$, and from this part \ref{Part5} is clear.

For the last assertion it suffices to show that under the hypothesis of 6 one has $\alpha_\sigma=\id_{R_H^0}$ for all $\sigma\in Z$ for the maps in 3. This is clear, since the hypothesis implies that $\rho_H^\sigma=\rho_H$ for all $\sigma\in Z$.
\end{proof}

For $n\ge1$ define $\BF_{q^{n\ell^\infty}}=\dirlim_t \BF_{q^{n\ell^t}}$ and $X_{n\ell^\infty}=X\times_{\Spec\BF_q}\Spec \BF_{q^{n\ell^\infty}}$.
\begin{Thm}
\label{Thm-EllPowerTower} 
Suppose that the representation $\bar\rho\colon\pi_1(X)\to \GL_m(\BF)$ is absolutely irreducible when restricted to $\pi_1(\overline X)$. For $n\ge1$ and $t\in\{0,\infty\}$ denote by $\rho_{n\ell^t}\colon \pi_1(X_{n\ell^t})\longto \GL_m(R^0_{n\ell^t})$ the universal deformation in the sense of Mazur for deformations of the restriction $\bar\rho|_{\pi_1(X_{n\ell^t})}$ whose determinant is the Teichm\"uller lift of $\det\bar\rho|_{\pi_1(X_{n\ell^t})}$. Define similarly $\rho_{\infty}\colon \pi_1(\overline X)\longto \GL_m(R^0_\infty)$. Suppose that $\ell$ does not divide $m$ and that $\ell>2$ if $m>2$. Let $n\ge1$. Then the following hold:
\begin{enumerate}
\item The ring $R_n^0$ is reduced and finite flat over $\BZ_\ell$.
\item There is a canonical isomorphism $R_{n\ell^\infty}^0\to \invlim_t R_{n\ell^t}^0$, and $\dirlim_t \Spec R_{n\ell^t}^0$ is Zariski dense in $\Spec R_{n\ell^\infty}^0$.
\item There exists an $n_0$ such that for all multiples $n$ of $n_0$ one has a canonical isomorphism $R^0_\infty\to R_{n\ell^\infty}^0$.
\item If $g_{\overline X}$ denotes the genus of $\overline X$, then $R_\infty^0$ is formally smooth over $W(\BF)$ of relative dimension
\begin{equation}\label{Eqn-DimHone}
\dim H^1(\pi_1(\overline X),\ad^0)=(2g_{\overline X}-2)(m^2-1)=(2g_{\overline X}-2)\dim\ad^0.
\end{equation}
\end{enumerate}
\end{Thm}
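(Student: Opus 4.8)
The plan is to establish the four assertions of Theorem~\ref{Thm-EllPowerTower} in order, using Lemma~\ref{Lem-EllPowerTower} applied to the short exact sequence $1\to \pi_1(\overline X)\to \pi_1(X_{n\ell^\infty})\to Z\to 1$, where $Z=\Gal(\BF_{q^{n\ell^\infty}}/\BF_{q^n})\cong\BZ_\ell$ is procyclic and torsion free, and $s$ is a continuous splitting. Note that $\End_{\pi_1(\overline X)}(\BF^m)=\BF$ by absolute irreducibility, so the hypotheses of Lemma~\ref{Lem-EllPowerTower} are in place, and the role of ``$H$'' there will be played by $\pi_1(\overline X)$ and ``$G$'' by $\pi_1(X_{n\ell^t})$ for various~$t$.

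\emph{Part 1.} First I would recall that by de Jong~\cite{deJong} the unframed (or fixed-determinant) deformation ring $R_{n}$ of $\bar\rho|_{\pi_1(X_n)}$ is a finite flat complete intersection over $\BZ_\ell$; the fixed-determinant variant $R_n^0$ is obtained by quotienting out the universal determinant twist, which preserves finiteness, flatness and reducedness when $\ell\nmid m$ (here one uses $\ell\nmid m$ so that the determinant splits off as a direct factor of the deformation problem, and reducedness is inherited). This gives assertion~1 directly. The hypothesis $\ell>2$ when $m>2$ enters here as in de Jong's argument to guarantee that the relevant local rings are complete intersections and in particular reduced.

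\emph{Parts 2 and 3.} Assertion~2 is immediate from parts~3 and~4 of Lemma~\ref{Lem-EllPowerTower}: the canonical map $R_\infty^0\to\invlim_t R_{n\ell^t}^0$ is an isomorphism, and since by part~1 each $R_{n\ell^t}^0$ is flat over $\BZ_\ell$ and reduced, the density statement follows from Lemma~\ref{Lem-EllPowerTower}(4). For assertion~3, I would apply Lemma~\ref{Lem-EllPowerTower}(6): it suffices to show that for $n$ a suitable multiple of some fixed $n_0$, the conjugation action of $Z=\Gal(\BF_{q^{n\ell^\infty}}/\BF_{q^n})$ on $\pi_1(\overline X)/\kernel(\rho_\infty)$ through $s$ is trivial. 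By Lemma~\ref{Lem-EllPowerTower}(5) this action factors through a \emph{virtual} pro-$\ell$ quotient $\bar Z$ of $Z\cong\BZ_\ell$; since $\BZ_\ell$ is already pro-$\ell$, $\bar Z$ is a finite-index subgroup's image, hence $\bar Z$ itself is pro-$\ell$ and the action factors through a finite $\ell$-group quotient of $Z$, i.e.\ through $\Gal(\BF_{q^{n\ell^{t_0}}}/\BF_{q^n})$ for some $t_0$. Replacing $n$ by $n\ell^{t_0}$ (so $n_0 = n\ell^{t_0}$, absorbing the $\ell$-part into the base) makes the action trivial, and Lemma~\ref{Lem-EllPowerTower}(6) applies. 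One subtlety to check: the action and the quotient $\pi_1(\overline X)/\kernel(\rho_\infty)$ a priori depend on $n$ through the choice of splitting $s$, but enlarging $n$ only shrinks $Z$ and replaces $s$ by a compatible restriction, so the stabilization is genuine.

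\emph{Part 4.} Finally, I would identify $R_\infty^0$, the fixed-determinant deformation ring of $\bar\rho|_{\pi_1(\overline X)}$. The tangent space is $H^1(\pi_1(\overline X),\ad^0)$ and the obstructions lie in $H^2(\pi_1(\overline X),\ad^0)$. Since $\bar\rho|_{\pi_1(\overline X)}$ is absolutely irreducible, $H^0(\pi_1(\overline X),\ad^0)=0$; by Poincar\'e duality for the curve $\overline X$ over the algebraically closed field $\overline{\BF_q}$ (equivalently, $\pi_1(\overline X)$ being a Poincar\'e duality group of dimension~$2$ at $\ell\ne p$, with $\ad^0$ self-dual up to the cyclotomic twist, which is trivial here since the coefficients live over $\overline{\BF_q}$), one gets $H^2(\pi_1(\overline X),\ad^0)\cong H^0(\pi_1(\overline X),\ad^0)^\vee=0$. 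Hence the deformation problem is unobstructed and $R_\infty^0$ is formally smooth over $W(\BF)$ of relative dimension $\dim_\BF H^1(\pi_1(\overline X),\ad^0)$. The Euler characteristic formula for $\pi_1(\overline X)$ (the free profinite-up-to-completion group on $2g_{\overline X}$ generators modulo one relation) gives $\dim H^1 - \dim H^0 - \dim H^2 = (2g_{\overline X}-2)\dim\ad^0$, so $\dim H^1(\pi_1(\overline X),\ad^0)=(2g_{\overline X}-2)(m^2-1)$, as claimed.

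I expect the main obstacle to be assertion~3, specifically pinning down \emph{uniformly in $n$} that the conjugation action on $\pi_1(\overline X)/\kernel(\rho_\infty)$ is eventually trivial: one must argue that the finite $\ell$-power quotient through which it factors can be chosen independently of how far one has already gone up the tower, which is what makes the bound ``there exists $n_0$'' meaningful. Parts~1, 2 and~4 are comparatively formal given de Jong's theorem, Lemma~\ref{Lem-EllPowerTower}, and standard Galois cohomology of curves.
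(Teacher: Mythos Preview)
Your overall plan—feeding the tower into Lemma~\ref{Lem-EllPowerTower}—is the paper's approach, but two steps contain genuine gaps.

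\textbf{Part 1, reducedness.} ``Finite flat complete intersection'' does not imply reduced (think of $\BZ_\ell[x]/(x^2)$), and passing to the fixed-determinant quotient does not help. The paper argues differently: once $R_n^0$ is finite flat over $\BZ_\ell$ (by de~Jong for $m\le2$, Gaitsgory for $m>2$), it embeds into the finite $\BQ_\ell$-algebra $R_n^0[1/\ell]$, and one must show the latter is a product of fields. This comes from showing that every irreducible $\rho\colon\pi_1(X_n)\to\GL_m(\overline\BQ_\ell)$ has $H^1(\pi_1(X_n),\ad^0_\rho)=0$, which in turn follows from Jannsen's vanishing $H^2(\pi_1(X_n),\ad^0_\rho)=0$ and the global Euler characteristic over $X_n$. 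You are missing this characteristic-zero input.

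\textbf{Parts 2--3, the exact sequence and a circular step.} In the sequence $1\to \pi_1(\overline X)\to \pi_1(X_{n\ell^\infty})\to Z\to 1$ the quotient is $Z=\Gal(\overline{\BF_q}/\BF_{q^{n\ell^\infty}})$, the \emph{prime-to-$\ell$} part of $\hat\BZ$, not $\Gal(\BF_{q^{n\ell^\infty}}/\BF_{q^n})\cong\BZ_\ell$. Two different sequences are in play: for assertion~2 one needs $H=\pi_1(X_{n\ell^\infty})$, $G=\pi_1(X_n)$, $Z\cong\BZ_\ell$, so that the open subgroups $\ell^t\BZ_\ell$ have preimages $\pi_1(X_{n\ell^t})$ and Lemma~\ref{Lem-EllPowerTower}(3)--(4) apply directly; for assertion~3 one needs $H=\pi_1(\overline X)$, $G=\pi_1(X_{n\ell^\infty})$, $Z=\Gal(\overline{\BF_q}/\BF_{q^{n\ell^\infty}})$. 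Your argument for part~3 then breaks twice. First, with $Z\cong\BZ_\ell$ the conclusion of Lemma~\ref{Lem-EllPowerTower}(5) is vacuous—every quotient of $\BZ_\ell$ is already pro-$\ell$—so nothing forces the action to factor through a finite quotient. Second, even if it did, ``replacing $n$ by $n\ell^{t_0}$'' is a no-op: $X_{n\ell^{t_0}\cdot\ell^\infty}=X_{n\ell^\infty}$, so you have not produced a new $n_0$. The correct mechanism (the paper's) is that the full $G_{\BF_q}$-action factors through a virtual pro-$\ell$ quotient; the preimage of its open pro-$\ell$ subgroup is some $n_0\hat\BZ$, and then the prime-to-$\ell$ part $\Gal(\overline{\BF_q}/\BF_{q^{n_0\ell^\infty}})\subset n_0\hat\BZ$ lands in a pro-$\ell$ group, hence acts trivially. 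It is the \emph{prime-to-$\ell$} direction that furnishes $n_0$, not the $\ell$-power direction.

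Part~4 is correct and matches the paper.
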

\begin{proof}
Under the stated hypotheses on $\ell$, the finite flatness in part 1 follows from \cite[Thm.~1.3]{deJong} for $m\le2$ and \cite{Gaitsgory} for $m>2$. To see reducedness, observe that by finite flatness the ring $R_n^0[1/\ell]$ is a finite $\BQ_\ell$-algebra, and it suffices to show that it is a field. Let $\rho\colon \pi_1(X_n)\to\GL_m(\overline\BQ_\ell)$ be any irreducible representation. Then by \cite[Thm.~1]{Jannsen}, one has $H^2(\pi_1(X_n),\ad^0_{\rho})=0$ which in turn yields $H^1(\pi_1(X_n),\ad^0_{\rho})=0$, and hence that $\rho$ has no non-trivial deformations. This shows that $R_n^0[1/\ell]$ is a product of fields.

The proof of part 2 is a direct consequence of Lemma~\ref{Lem-EllPowerTower}, part 4, where for the short exact sequence $1\to H\to G\to Z\to 1$ one takes $1\to \pi_1(\overline X)\to  \pi_1(X)\to G_{\BF_q}\to1$. Next, by Lemma~\ref{Lem-EllPowerTower}, part 5, there exists $n_0>1$ such that $\Gal(\overline\BF_q/\BF_{n_0\ell^\infty})$ acts trivially on $\pi_1(\overline X)/\kernel(\rho_\infty)$. If one applies Lemma~\ref{Lem-EllPowerTower}, part 6, to the sequence $1\to \pi_1(\overline X)\to  \pi_1(X_{n\ell^\infty})\to \Gal(\overline\BF_q/\BF_{n\ell^\infty}) \to1$ for any $n$ with $n_0|n$, part 3 follows.

We finally prove part 4. The proof of the relative formal smoothness is given in \cite{deJong}. To compute the relative dimension one first observes that $\dim H^1(\pi_1(\overline X),\ad^0)$ is equal to the Euler-Poincar\'e characteristic of $\ad^0$ as a sheaf on $\pi_1(\overline X)$, because the relevant $H^0$ and $H^2$ terms vanish; the former by absolute irreducibility, the latter by Poincar\'e duality. The Euler-Poincar\'e characteristic is then evaluated to the number given in (\ref{Eqn-DimHone}) by the Theorem of Grothendieck-Ogg-Shafarevich, using that $\ad^0$ is lisse on~$\overline X$.
\end{proof}

\begin{Rem}
Let $\bar\rho\colon \pi_1(X)\to\GL_m(\BF_q)$ be as in the previous theorem. Because of part 1 of the theorem, for every $n$ and $t$, the ring $R^0_{n\ell^t}[\frac1\ell]$ is a product of fields. By the Theorem of Lafforgue, \cite{Lafforgue}, each factor corresponds to a cuspidal automorphic representation of $\GL_m(\BA_{X_{n\ell^t}})$. By parts 2--4, the number of such factors tends to $\infty$ as $t\to\infty$. This means that the number of cuspidal automorphic representations for $\GL_m(\BA_{X_{n\ell^t}})$ 
of fixed central character the Teichm\"uller lift of $\det\bar\rho$ that are congruent modulo $\ell$ to the given $\bar\rho$ also converges to $\infty$ for $t\to\infty$. However the theorem makes no quantitative assertions. For instance it sheds no light on the $\BQ_\ell$-dimension of $R^0_{n\ell^t}[\frac1\ell]$.
\end{Rem}



\section{Dihedral representations}
\label{SubSec-Dihedral}
In the following, we denote by $E$ an algebraically closed field of characteristic different from $2$ and from $p$. Unless indicated otherwise, $E$ will carry the discrete topology. We call a continuous representation $\rho$ of a profinite group $G$ over $E$ dihedral, if there exists an index $2$ subgroup $H$ of $G$ and a character $\chi\colon H\to E^*$ such that $\rho=\Ind_H^G\chi$ where $\chi$ satisfies 
$\chi\neq\chi^c$; here $c$ is any element in $G\setminus H$ and $\chi^c\colon H\to E^*$ is the character $h\mapsto \chi(chc^{-1})$ (which is independent of the chosen $c$). We note right away that $\rho$ is induced from exactly two such characters, namely $\chi$ and $\chi^c$. The condition $\chi\neq\chi^c$ implies that the image of $G$ under $\rho$ is non-abelian. Without further mentioning, we assume in the following that all representations are continuous. Because $E$ is discrete, all representations we consider have image of finite order.

\smallskip


The aim of this section is to deduce asymptotic formulas for the number of dihedral representations of $\pi_1(X_n)$, that remain dihedral after restriction to $\pi_1(\overline X)$, as $n$ varies, up to certain twist that we shall specify below. Such representations are induced from characters $\chi : \pi_1(Y_n) \rightarrow E^*$ for geometrically irreducible double covers $Y \rightarrow X$, and we shall count the latter up to twisting.
As recalled from \cite[1.2]{Deligne-Comptage} in the introduction, this number is the same as the number of dihedral representations of $\overline X$ that are fixed by the $n$-th power of the $q$-Frobenius. 
%



\subsection{$\ell$-power torsion of abelian varieties}
\label{SubSubSectEllPrimary}
Let $A$ be an abelian variety over $\BF_q$ of dimension $\dimA$. We collect some simple facts on the behavior of the size of the $\ell$-power torsion of $A(\BF_{q^n})$ for $n\to\infty$. The results below are inspired by the behavior of $|q^n-1|_\ell^{-1}$ for $n\to\infty$, where $q^n-1$ can be thought of as the cardinality of the $\BF_{q^n}$-valued points of the multiplicative scheme $\BG_m$. The results are probably well-known, but we could not locate a reference. For background on abelian varieties, we refer to the forthcoming book \cite{EdixhovenMoonenVanDerGeer}, and there in particular to Chapter~12.

By $\pi_A$ we denote the Frobenius of $A$ relative to $\BF_q$. We regard $\pi_A$ as an element of the $\BQ$-endomorphism ring $D=\End_\BQ(A)$. We write $\ch_A$ for the characteristic polynomial of $\pi_A$. It is the unique monic polynomial in $\BZ[T]$ of degree $2\dimA$ such that for all $n\in\BN$ one has $\ch_A(n)=\deg([n]_A-\pi_A)$, where $[n]_A$ is the multiplication by $n$ map on $A$ and $\deg$ the degree of an endomorphism, with the convention that the degree is zero if the endomorphism is not finite. For any prime number $\ell\neq p$, the polynomial $\ch_A$ is also the characteristic polynomial of the endomorphism on the $\ell$-adic Tate module of $A$ induced by $\pi_A$. The roots of $\ch_A$ in $\overline\BQ$ we denote by  $\alpha_i$, $i=1,\ldots,2\dimA$. They are $q$-Weil numbers (of weight one), i.e., algebraic integers $\alpha$ such that for any embedding $\iota\colon\overline\BQ\to\BC$ one has $|\iota(\alpha)|=q^{1/2}$. 

The elements of $A(\BF_{q^n})$ are equal to the kernel of the isogeny $\id-\pi_A^n\colon A\to A$. The degree of this isogeny is 
\[\# A(\BF_{q^n})= \deg(\id-\pi_A^n)= \prod_{i=1}^{2\dimA}(1-\alpha_i^n).\] 
Denote by $K$ the splitting field of $\ch_A$ over $\BQ$. We fix a prime $\ell$ different from $p$ and a place $\lambda$ of $K$ above~$\ell$. By $K_\lambda$ we denote the completion of $K$ at $\lambda$, by $k_\lambda$ the residue field of $K_\lambda$, by $\varpi_\lambda$ a uniformizer or $K_\lambda$ 
and by $|\blank|_\lambda$ the valuation on $K_\lambda$ that extends the valuation $|\blank|_\ell$ on $\BQ_\ell$ with $|\ell|_\ell=\frac1\ell$. The field $K_\lambda$ is unramified over $\BQ_\ell$ for almost all $\ell$. If this holds for $\ell$, one may take $\varpi_\lambda=\ell$. Choosing $\lambda$ enables us to analyze the factors $(1-\alpha_i^n)$ separately because $|\prod_i(1-\alpha_i^n)|_\ell=\prod_i|1-\alpha_i^n|_\lambda$. 

By the extension of Fermat's Little Theorem to finite fields we have $\alpha_i^{\#k_\lambda-1}\equiv1\pmod {\varpi_\lambda}$ for all $i=1,\ldots,2\dimA$. We denote by $h_\ell$ the smallest divisor of $\#k_\lambda-1$ such that 
$$\alpha_i^{h_\ell}\equiv1\pmod {\varpi_\lambda}\hbox{ for all }i=1,\ldots,2\dimA.$$
From the binomial theorem one deduces that $\alpha_i^{h_\ell\,\ell^j}\to 1$ for $j\to\infty$. 
Let $j_\ell\ge0$ be the smallest integer such that 
\[\big|  \alpha_i^{h_\ell\,\ell^{j_\ell}}-1 \big|_\lambda < \ell^{\frac{-1}{\ell-1}} <1 \hbox{ for all }i=1,\ldots,2\dimA 
.\]
Note that $j_\ell=0$ whenever $K/\BQ$ is unramified above $\ell$ and $\ell>2$. 

For each divisor $d$ of $h_\ell$ and each $j\ge0$ we define $N_{\ell,d,j}$ as $\# A[\ell^\infty](\BF_{q^{d\ell^{{}^j}}})$, and $g_d$ as the number of $i\in\{1,\ldots,2\dimA\}$ such that $\alpha_i^{d}\equiv1\pmod {\varpi_\lambda}$. Then $g_{h_\ell}=2\dim A\ge g_d$ for all $d$.

%
%
%
\begin{Prop}\label{PropEllPowerTorsion}
For $n\in\BN$ define $j=-\log_\ell |n|_\ell$, i.e., as the largest integer $j$ such that $\ell^j|n$. Then 
$$\#A[\ell^\infty](\BF_{q^n}) = \Big| \prod_{i=1}^{2\dimA}(1-\alpha_i^n) \Big|_\lambda^{-1}= N_{\ell,\gcd(n,h_\ell),j}.$$
Moreover for $j\ge j_\ell$ one has $N_{\ell,\gcd(n,h_\ell),j+k}=\ell^{kg_{\gcd(n,h_\ell)}}N_{\ell,\gcd(n,h_\ell),j}$.
\end{Prop}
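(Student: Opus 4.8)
The plan is to reduce everything to a place-by-place analysis of the factors $1-\alpha_i^n$ in $K_\lambda$, exactly as the setup before the proposition has already arranged via the identity $|\prod_i(1-\alpha_i^n)|_\ell=\prod_i|1-\alpha_i^n|_\lambda$. First I would recall that $\#A[\ell^\infty](\BF_{q^n})$ is the $\ell$-part of $\#A(\BF_{q^n})=\deg(\id-\pi_A^n)=\prod_{i=1}^{2\dimA}(1-\alpha_i^n)$, so that $\#A[\ell^\infty](\BF_{q^n})=|\prod_i(1-\alpha_i^n)|_\lambda^{-1}$; this is the left and middle equality and is essentially immediate from the definitions of $\ch_A$ and of $\lambda$.

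Next I would prove the equality with $N_{\ell,\gcd(n,h_\ell),j}$. Write $n=d\ell^j u$ with $d=\gcd(n,h_\ell)$, $\ell\nmid u$, and $j=-\log_\ell|n|_\ell$; here one uses that $h_\ell\mid \#k_\lambda-1$ is prime to $\ell$, so the $\ell$-adic valuation of $n$ and the residue-prime-to-$\ell$ part of $n$ dividing $h_\ell$ are exactly recorded by $j$ and $d$. The key local claim is that $|1-\alpha_i^n|_\lambda$ depends on $n$ only through $d\ell^j$, i.e. is unchanged if one multiplies the exponent by a unit $u$ prime to $\ell$. To see this, fix $i$ and consider the multiplicative order behavior: if $\alpha_i^{d}\not\equiv 1\pmod{\varpi_\lambda}$ then $\alpha_i^{d\ell^j}\not\equiv 1$ either (raising to an $\ell$-power cannot make a nontrivial element of the residue field trivial, as $\ell\nmid\#k_\lambda^\times$), and likewise $\alpha_i^{d\ell^j u}\not\equiv 1$ since $u$ is prime to the order of $\alpha_i$ in $k_\lambda^\times$; in all these cases $|1-\alpha_i^n|_\lambda=1$ regardless of $u$. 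If instead $\alpha_i^{d}\equiv 1\pmod{\varpi_\lambda}$, write $\alpha_i^{d}=1+\beta$ with $|\beta|_\lambda<1$; then $\alpha_i^{d\ell^j}=(1+\beta)^{\ell^j}$ and $\alpha_i^{n}=(1+\beta)^{\ell^j u}$, and a binomial-expansion/ultrametric estimate shows $|1-(1+\beta)^{\ell^j u}|_\lambda=|1-(1+\beta)^{\ell^j}|_\lambda$ because the unit $u$ only permutes/scales terms of the same or larger valuation. Multiplying over $i$ gives $|\prod_i(1-\alpha_i^n)|_\lambda=|\prod_i(1-\alpha_i^{d\ell^j})|_\lambda$, i.e. $\#A[\ell^\infty](\BF_{q^n})=N_{\ell,d,j}$.

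For the last assertion, fix $d=\gcd(n,h_\ell)$ and suppose $j\ge j_\ell$. Split the index set: for $i$ with $\alpha_i^d\not\equiv 1\pmod{\varpi_\lambda}$ the factor $1-\alpha_i^{d\ell^{j}}$ is a unit for every such $j$ and contributes nothing; there are $2\dimA-g_d$ of these. For the remaining $g_d$ indices write $\alpha_i^{d\ell^{j_\ell}}=1+\gamma_i$ with $|\gamma_i|_\lambda<\ell^{-1/(\ell-1)}$, which is precisely the defining inequality of $j_\ell$ and is exactly the radius of convergence condition that makes $x\mapsto (1+x)^\ell$ multiply valuations by $\ell$ for the relevant group elements: one has $|1-(1+\gamma_i)^{\ell}|_\lambda=\ell^{-1}|\gamma_i|_\lambda$, and inductively $|1-\alpha_i^{d\ell^{j+k}}|_\lambda=\ell^{-k}|1-\alpha_i^{d\ell^{j}}|_\lambda$ for all $k\ge 0$ once $j\ge j_\ell$. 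Taking the product over the $g_d$ relevant indices and inverting gives $N_{\ell,d,j+k}=\ell^{k g_d}N_{\ell,d,j}$, as claimed.

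The main obstacle I anticipate is the clean verification of the local identity $|1-(1+x)^{\ell}|_\lambda=\ell^{-1}|x|_\lambda$ under the hypothesis $|x|_\lambda<\ell^{-1/(\ell-1)}$, together with the invariance under prime-to-$\ell$ twists of the exponent. This is where one must be careful with the binomial coefficients $\binom{\ell}{k}$ (the $k=1$ term has an extra factor $\ell$, all others have valuation at least $2|x|_\lambda$ in the relevant range), and with the possibility that $K/\BQ$ is ramified above $\ell$, which is exactly why $j_\ell$ and the sharper bound $\ell^{-1/(\ell-1)}$ rather than $\ell^{-1}$ are needed; the remark that $j_\ell=0$ when $K/\BQ$ is unramified above $\ell$ and $\ell>2$ falls out of this estimate. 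Everything else is bookkeeping with $q$-Weil numbers and the multiplicativity of $|\blank|_\lambda$.
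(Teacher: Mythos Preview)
Your approach is essentially the same as the paper's: reduce to the factorwise identities $|1-\alpha_i^{n\ell^j}|_\lambda=|1-\alpha_i^{\gcd(n,h_\ell)\ell^j}|_\lambda$ and, for $j\ge j_\ell$, $|1-(1+\eps)^\ell|_\lambda=\ell^{-1}|\eps|_\lambda$ when $|\eps|_\lambda<\ell^{-1/(\ell-1)}$. The structure and the binomial estimate are exactly what the paper does.

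There is one genuine slip to fix. In the growth step you write $\alpha_i^{d\ell^{j_\ell}}=1+\gamma_i$ with $|\gamma_i|_\lambda<\ell^{-1/(\ell-1)}$ and call this ``precisely the defining inequality of $j_\ell$''. It is not: $j_\ell$ is defined via $|\alpha_i^{h_\ell\ell^{j_\ell}}-1|_\lambda<\ell^{-1/(\ell-1)}$, with $h_\ell$ rather than $d$. You must first invoke the prime-to-$\ell$ invariance you already proved: since $\alpha_i^{d}\equiv1\pmod{\varpi_\lambda}$ and $h_\ell/d$ is prime to $\ell$, one has $|\alpha_i^{d\ell^{j_\ell}}-1|_\lambda=|\alpha_i^{h_\ell\ell^{j_\ell}}-1|_\lambda$, and only then does the defining inequality of $j_\ell$ apply. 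The paper makes exactly this reduction explicit before running the induction. A smaller point: your justification ``$u$ is prime to the order of $\alpha_i$'' is not what is true in general; the correct reason that $\alpha_i^{d\ell^j u}\not\equiv1$ when $\alpha_i^{d}\not\equiv1$ is that the order $o_i$ of $\alpha_i$ in $k_\lambda^\times$ divides $h_\ell$, so $o_i\mid du$ forces $o_i\mid\gcd(du,h_\ell)=d$.
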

Proposition~\ref{PropEllPowerTorsion} means that $\#A[\ell^m](\BF_{q^n}) $ depends only on the subgroup of $\BZ/(h_\ell)$ generated by~$n$ and the $\ell$-divisibility of~$n$, and the contribution of the $\ell$-divisibility behaves regular for $|n|_\ell^{-1}$ sufficiently~large.

\medskip

\begin{proof}[Proof of Proposition~\ref{PropEllPowerTorsion}]
The two points that require proof are:
\begin{enumerate}
\item For $j\ge0$ and $n\ge 1$ prime to $\ell$ one has $| 1-\alpha_i^{n\ell^j}|_\lambda=| 1-\alpha_i^{\gcd(n,h_\ell)\ell^j}|_\lambda$.
\item For $d$ a divisor of $h_\ell$ and $j\ge j_\ell$ one has $| 1-\alpha_i^{d\ell^{j}}|_\lambda=\big(\frac1\ell\big)^{j-j_\ell}\cdot | 1-\alpha_i^{d\ell^{j_\ell}}|_\lambda$.
\end{enumerate}
If $\alpha_i^n\not\equiv1\pmod {\varpi_\ell}$, then all four expressions above have the value $1$, and so the equalities are clear. So let us assume from now on that $\alpha_i^n\equiv1\pmod {\varpi_\ell}$. Then all four expressions take values in the open interval $(0,1)$; the value $0$ is impossible since $\alpha_i$ is a $q$ Weil number, and in particular not a root of unity. For equality 1 observe, that $n':=n/\gcd(n,h_\ell)$ is prime to $\ell$. Since $\alpha_i^{\gcd(n,h_\ell)\ell^j}\equiv1\pmod {\varpi_\ell}$ raising the element $\alpha_i^{\gcd(n,h_\ell)}$ to the power $n'$ does not change its distance to $1$, and this proves part~1.

Regarding equality 2, observe first, that by the just discussed part 1, we can on both sides replace $n$ by $h_\ell$ without changing the valuations. By the definition of $j_\ell$ we have 
$ | \alpha_i^{h_\ell\,\ell^{j_\ell}}-1 |_\lambda < \big(\frac1\ell\big)^{\frac1{\ell-1}}$. By induction on $j-j_\ell\ge0$ one easily deduces the equality in~2: for the induction one shows by the binomial theorem that $|(1+\eps)^\ell-1 |_\lambda = |(1+\ell\eps)-1|_\lambda = |\ell|_\ell |\eps|_\lambda$ if $\eps\in K_\lambda$ satisfies $|\eps|_\lambda<\big(\frac1\ell\big)^{\frac1{\ell-1}}$.
\end{proof}

\subsection{Counting dihedral representations up to twisting}
\label{Counting}

In this subsection, we fix the following abstract setting: $G$ is a profinite group and $H\subset G$ is an index $2$ subgroup such that one has commutative diagram with exact rows
\begin{equation}\label{DiagramForAbstDihedral}
\xymatrix{
0 \ar[r]& M'\ar[r]\ar[d]& H^\ab\ar[r]\ar[d]&\hat\BZ\ar[r]\ar@{=}[d]&0\\
0 \ar[r]& M\ar[r]& G^\ab\ar[r]^\kappa&\hat\BZ\ar[r]&0\rlap{,}\\
}
\end{equation}
in which $M$ and $M'$ are finite abelian groups. We shall give elementary formulas and estimates for the number of non-abelian dihedral representations $\rho$ over $E$ of $G$ induced from $H$ in terms of $M$ and $M'$. Recall that dihedral means that $\rho=\Ind_H^G\chi$, where $\chi\colon H\to E^*$ is a character such that $\chi\neq\chi^c$, and where $c$ is a fixed element of $G\setminus H$. 
By $\kappa$ we also denote the composite $G\to G^\ab\stackrel\kappa\to \hat\BZ$.

\begin{Def} Let $\rho$ and $\rho'$ be representations of $G$ (or of $H$) over $E$.


We call $\rho$ and $\rho'$ {\em strongly twist-equivalent over $G$ (over $H$)}, and write $\rho\approx\rho'$, if there is a character $\chi_0\colon G\to E^*$ (or $\chi_0\colon H\to E^*$) that is trivial on $\kernel \kappa$ (or $\kernel\kappa|_H$) such that $\rho'\cong\rho\otimes\chi_0$.

We call $\rho$ {\em stably irreducible} if the restriction $\rho|_{\kernel\kappa}$ is irreducible.
\end{Def}
The proof of the following lemma is straightforward and left to the reader.
\begin{Lem}\label{LemOnTwistEquiv-new1}
Let $\chi\colon H\to E^*$ be a character and set $\rho=\Ind_H^G\chi$.
\begin{enumerate}
\item The order of $\image(\rho)$ is invertible in $E$ and hence $\rho$ is semisimple.
\item 
The representation $\rho$ is stably irreducible if and only if $\chi|_{M'}\neq\chi^c|_{M'}$.
\item 
Suppose $\rho$ is stably irreducible. Then for any character $\chi'\colon H\to  E^*$ one has the following equivalences
\[\Ind_H^G\chi\approx \Ind_H^G\chi'  \hbox{ over }G\Longleftrightarrow \chi'\approx \chi \hbox{ or }\chi'\approx \chi^c \hbox{ over }H\Longleftrightarrow \chi'|_{M'}= \chi|_{M'} \hbox{ or }\chi'|_{M'}=\chi^c|_{M'}. \]
\item The restriction map $\chi\mapsto \chi|_{M'}$ defines a bijection
\[\{\hbox{characters }\chi\colon H\to E^*\}/\!\!\approx \ \to \{\hbox{characters }\chi_1\colon M'\to E^*\}. \]
\item 
The strong twist-equivalence classes of stably irreducible dihedral representations of $G$ over $E$ are in bijection with (unordered) pairs $\{\chi_1,\chi_1^c\}$ of characters $\chi_1\colon M'\to E^*$ such that $\chi_1\neq\chi_1^c$.
\end{enumerate}
\end{Lem}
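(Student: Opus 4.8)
The plan is to prove the five parts of Lemma~\ref{LemOnTwistEquiv-new1} in order, each part feeding into the next, the central tool being Mackey/Frobenius reciprocity combined with the very explicit description of $H^\ab$ and $G^\ab$ afforded by diagram~(\ref{DiagramForAbstDihedral}).

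For part~1, observe that $\chi$ has image of finite order (since $E$ is discrete), hence factors through a finite quotient of $H$, and $\Ind_H^G\chi$ factors through the corresponding finite quotient of $G$; the order of that image divides $2\cdot|\image\chi|$, which is invertible in $E$ since $\mathrm{char}\,E\neq 2$ and is coprime to $|\image\chi|$. Maschke then gives semisimplicity. For part~2, compute $\rho|_{\kernel\kappa}$ by Mackey: $\kernel\kappa|_H$ has index $\leq 2$ in $\kernel\kappa$, and one checks using the diagram that either $\kernel\kappa\subset H$ or $\kernel\kappa\cdot H=G$; in the latter case $\rho|_{\kernel\kappa}=\Ind_{\kernel\kappa|_H}^{\kernel\kappa}(\chi|)$, and this induced representation is irreducible precisely when $\chi|_{\kernel\kappa|_H}$ is not fixed by conjugation by $c$, which --- since $\kernel\kappa|_H$ differs from $M'$ only by the prime-to-order subgroup $\kernel\kappa|_H\cap(\text{lift of }\hat\BZ)$, and conjugation by $c$ acts there --- is equivalent to $\chi|_{M'}\neq\chi^c|_{M'}$. (One must treat the degenerate case $\kernel\kappa\subset H$ separately and see it cannot produce a stably irreducible $\rho$.)

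Part~3 is the heart. Given $\rho$ stably irreducible, $\Ind_H^G\chi\approx\Ind_H^G\chi'$ means $\Ind_H^G\chi'\cong\Ind_H^G\chi\otimes\chi_0$ for some $\chi_0\colon G\to E^*$ trivial on $\kernel\kappa$; by the projection formula $\Ind_H^G\chi\otimes\chi_0\cong\Ind_H^G(\chi\cdot(\chi_0|_H))$, so the question reduces to when two induced characters from $H$ are isomorphic, which by Mackey is exactly $\chi'=\chi\cdot\psi$ or $\chi'=\chi^c\cdot\psi$ for some $\psi\colon H\to E^*$ that extends to $G$ trivially on $\kernel\kappa$ --- and such $\psi$ are exactly the characters of $H$ trivial on $\kernel\kappa|_H$, using that $H^\ab\to G^\ab$ together with the diagram lets one extend (the cokernel of $M'\to M$ is all that obstructs, and it is killed after restricting to $M'$). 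Restricting everything to $M'$ and using that $\kernel\kappa|_H$ restricted to $M'$ is trivial (so $\psi|_{M'}=1$) gives the second equivalence $\chi'|_{M'}=\chi|_{M'}$ or $\chi'|_{M'}=\chi^c|_{M'}$. Part~4 then follows: surjectivity of $\chi\mapsto\chi|_{M'}$ holds because $M'\hookrightarrow H^\ab$ and characters of subgroups of finite abelian groups extend; injectivity up to $\approx$ is the content of part~3 specialized to $\chi$ versus $\chi'$ with the $\chi^c$ alternative excluded by working with characters of $H$ rather than $G$ --- more precisely one shows $\chi\approx\chi'$ over $H$ iff $\chi|_{M'}=\chi'|_{M'}$, directly from $H^\ab/M'\cong\hat\BZ$ and that $\kernel\kappa|_H$ surjects onto this $\hat\BZ$. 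Finally part~5 assembles the pieces: stably irreducible dihedral $\rho$ up to $\approx$ correspond via $\rho\mapsto\{\chi|_{M'},\chi^c|_{M'}\}$ (well-defined by parts~3--4, and recalling from the section preamble that $\rho$ determines $\chi$ up to $\{\chi,\chi^c\}$) to unordered pairs $\{\chi_1,\chi_1^c\}$ with $\chi_1\neq\chi_1^c$, the inequality being part~2.

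The main obstacle I expect is bookkeeping in part~3: precisely identifying the group of characters $\psi\colon H\to E^*$ that arise as restrictions of characters of $G$ trivial on $\kernel\kappa$, and checking it equals $\{\psi\colon H\to E^*:\psi|_{\kernel\kappa|_H}=1\}$. This requires a careful diagram chase in~(\ref{DiagramForAbstDihedral}), in particular understanding the map $M'\to M$ and the action of $c$-conjugation on all four terms; the finiteness of $M,M'$ and the divisibility of $E^*$ (it is an algebraically closed field of characteristic coprime to everything in sight) make all the relevant extension problems solvable, but one has to say this cleanly. Everything else --- parts 1, 4, 5 --- is essentially formal once part~3 is in hand, which is presumably why the authors deem the proof "straightforward and left to the reader."
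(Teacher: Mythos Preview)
Your proposal is correct, and since the paper explicitly leaves this proof to the reader (``straightforward and left to the reader''), there is nothing to compare against; your Mackey/projection-formula approach together with the diagram chase is exactly the natural route. Two small clean-ups: in part~2 the case $\kernel\kappa\subset H$ never arises, because the top row of~(\ref{DiagramForAbstDihedral}) shows $H$ surjects onto $\hat\BZ$, so $H\cdot\kernel\kappa=G$ always; and in part~4 the extension of $\chi_1\colon M'\to E^*$ to $H^\ab$ is best justified by observing that $0\to M'\to H^\ab\to\hat\BZ\to0$ splits (since $\hat\BZ$ is projective among profinite abelian groups), rather than by divisibility of $E^*$ or finiteness of $H^\ab$ --- indeed $E^*$ need not be divisible when $\mathrm{char}\,E>0$, and $H^\ab$ is not finite.
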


One has a short exact sequence $1\to H^\ab\to G/[G,G]\to \BZ/2\to 1$. The element $c$ chosen above maps to the generator of $\BZ/2$, and conjugation by $c$ acts on $H^\ab$ as an endomorphism of order $2$. For any subgroup $H'\subset H^\ab$, we denote by $[c,H']$ the subgroup of $H^\ab $ generated by the commutators $[c,h']$, $h'\in H'$. Note that $[c,H^\ab]$ maps to $0$ under $\kappa$ and hence $[c,H^\ab]$ is a subgroup of~$M'$. We also denote by $M^{\prime-}$ the subgroup of $M'$ on which $c$ acts by multiplication with~$-1$. Again we leave the proof of the following result to the reader.
\begin{Lem}\label{LemOnTwistEquiv-new2}
Let $\chi_1\colon M'\to E^*$ be a character.
\begin{enumerate}
\item $\chi_1=\chi_1^c$ if and only if $[c,M']$ lies in the kernel of $\chi_1$.
\item $\chi_1$ extends to a character of $G/[H,H]$, i.e., of $M$, if and only if $[c,H^\ab]$ lies in the kernel of $\chi_1$. 
\item One has $2M^{\prime-}\subset [c,M^{\prime-}]\subset [c,M']\subset [c,H^\ab]\subset M^{\prime-}$. 
\item The index $e=[[c,H^\ab]:[c,M']]$ is $1$ or~$2$.
\item The map $\Hom(M,E^*)\to\{\chi_1\in\Hom(M',E^*)\mid \chi_1^c=\chi_1\}, \chi\mapsto \chi|_{M'}$ has kernel of order $2$ and image of index $e$, with $e$ from~4.
\end{enumerate}
\end{Lem}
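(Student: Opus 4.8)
The plan is to establish the five parts in order, with part~2 as the crux and the others following by bookkeeping. First I would record three structural facts about the conjugation action of $c$ on $H^\ab$ that get used throughout: (i) it has order dividing $2$; (ii) it is trivial on $\hat\BZ$ — because $G^\ab$ is abelian and the bottom row of~(\ref{DiagramForAbstDihedral}) is $c$-equivariant — so that $M'=\kernel(H^\ab\to\hat\BZ)$ is $c$-stable and $[c,H^\ab]$ is annihilated by $H^\ab\to\hat\BZ$, giving $[c,H^\ab]\subseteq M'$; and (iii) $c^2\in H$ acts trivially on $H^\ab$, since inner automorphisms act trivially on an abelianization. Writing the finite groups additively and $[c,a]=(c-1)a$, part~1 is then immediate: $\chi_1^c=\chi_1$ says that $\chi_1$ annihilates $(c-1)m$ for every $m\in M'$, i.e.\ $[c,M']=(c-1)M'\subseteq\kernel\chi_1$.

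For part~2 the content is the identity $\kernel(M'\to M)=[c,H^\ab]$; granting it, a character of $M'$ is pulled back along $M'\to M$ precisely when it kills $[c,H^\ab]$, and it then extends to all of $M$ because $E^*$ is divisible. To prove the identity I would work inside $G/[H,H]$, in which $H^\ab=H/[H,H]$ sits as an index-$2$ normal subgroup; then $G^\ab=(G/[H,H])/([G,G]/[H,H])$, and I would verify the standard description $[G,G]/[H,H]=[c,H^\ab]$ valid for any extension of $\BZ/2$ by an abelian group. Hence $\kernel(H^\ab\to G^\ab)=[c,H^\ab]$, and intersecting with $M'$ and using $[c,H^\ab]\subseteq M'$ yields $\kernel(M'\to M)=[c,H^\ab]$. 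This commutator computation in $G/[H,H]$ is the one place I expect to have to be genuinely careful, rewriting an arbitrary commutator modulo terms $[c,a]$ using only the abelianness of $H^\ab$.

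Parts~3 and~4 are short computations with the $c$-action. For part~3: for $a\in H^\ab$ put $b=(c-1)a$; then $cb=c^2a-ca=a-ca=-b$ by (iii), so $b\in M^{\prime-}$ and hence $[c,H^\ab]\subseteq M^{\prime-}$; the middle inclusions $[c,M^{\prime-}]\subseteq[c,M']\subseteq[c,H^\ab]$ are monotonicity of $[c,-]$; and for $m\in M^{\prime-}$ one has $[c,m]=(-m)-m=-2m$, whence $2M^{\prime-}\subseteq[c,M^{\prime-}]$ (in fact equality). For part~4: $c-1$ carries $H^\ab$ into $M'$ and $M'$ onto $[c,M']$, so it induces a continuous homomorphism $\hat\BZ=H^\ab/M'\to M'/[c,M']$ with image $[c,H^\ab]/[c,M']$; that image is cyclic, generated by the class $\bar b$ of $b=(c-1)a$ for $a$ a lift of a topological generator, and $b\in M^{\prime-}$ forces $[c,b]=-2b\in[c,M']$, hence $2\bar b=0$. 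Therefore $e=\#([c,H^\ab]/[c,M'])\in\{1,2\}$.

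Finally part~5 assembles the rest. By part~1 the target of $\chi\mapsto\chi|_{M'}$ is $\Hom(M'/[c,M'],E^*)$, and by part~2 its image is $\Hom(M'/[c,H^\ab],E^*)$; since for a finite abelian group $A$ with subgroup $B$ the index of $\Hom(A/B,E^*)$ in $\Hom(A,E^*)$ is $\#B$, the index of the image equals $\#([c,H^\ab]/[c,M'])=e$ by part~4. For the kernel I would observe, using the commutativity of~(\ref{DiagramForAbstDihedral}), that $\kappa$ is already surjective on the index-$2$ subgroup $\mathrm{im}(H^\ab\to G^\ab)$ of $G^\ab$, so $G^\ab=M+\mathrm{im}(H^\ab\to G^\ab)$ and hence $[M:\mathrm{im}(M'\to M)]=[G^\ab:\mathrm{im}(H^\ab\to G^\ab)]=2$; thus the kernel is $\Hom(M/\mathrm{im}(M'\to M),E^*)\cong\Hom(\BZ/2,E^*)$, which has order $2$ because $\mathrm{char}\,E\neq2$. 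Apart from the commutator identity in part~2, everything reduces to chasing the diagram~(\ref{DiagramForAbstDihedral}) and to elementary duality for finite abelian groups over the algebraically closed field~$E$.
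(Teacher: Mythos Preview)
Your argument is correct. The paper itself leaves this lemma to the reader (``Again we leave the proof of the following result to the reader''), so there is no proof to compare against; what you have written is exactly the kind of verification the authors had in mind. The key identifications --- $[G,G]/[H,H]=[c,H^\ab]$ inside $G/[H,H]$ for part~2, the computation $c\cdot(c-1)a=-(c-1)a$ via $c^2\in H$ for parts~3 and~4, and the snake-lemma style identification $M/\mathrm{im}(M'\to M)\cong G^\ab/\mathrm{im}(H^\ab\to G^\ab)\cong G/H\cong\BZ/2$ for part~5 --- are all sound. One small point: in part~5 you implicitly use $M\cap\mathrm{im}(H^\ab\to G^\ab)=\mathrm{im}(M'\to M)$, which follows immediately from the commutativity of~(\ref{DiagramForAbstDihedral}) but is worth stating.
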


By combining the above two lemmas, one proves the following:
\begin{Cor}
The number of strong twist-equivalence classes of stably irreducible dihedral representations of $G$ over $E$ is $\frac12(\#\Hom(M',E^*)-\frac{e}2\#\Hom(M,E^*))$, where $\Hom$ denotes homomorphisms of abelian groups.
\end{Cor}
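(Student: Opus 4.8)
The plan is to count the stably irreducible dihedral representations of $G$ by first parametrizing them via Lemma~\ref{LemOnTwistEquiv-new1}(5), and then carving out the "bad" characters using Lemma~\ref{LemOnTwistEquiv-new2}. Concretely, Lemma~\ref{LemOnTwistEquiv-new1}(5) says that the strong twist-equivalence classes of stably irreducible dihedral representations of $G$ over $E$ are in bijection with unordered pairs $\{\chi_1,\chi_1^c\}$ of characters $\chi_1\colon M'\to E^*$ with $\chi_1\neq\chi_1^c$. So I would start by computing the cardinality of the set $S=\{\chi_1\in\Hom(M',E^*)\mid \chi_1\neq\chi_1^c\}$, and then divide by $2$ at the very end, since each class corresponds to exactly two characters (the involution $\chi_1\mapsto\chi_1^c$ acts freely on $S$ by construction).

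The first main step is to compute $\#S = \#\Hom(M',E^*) - \#\{\chi_1\in\Hom(M',E^*)\mid\chi_1^c=\chi_1\}$. By Lemma~\ref{LemOnTwistEquiv-new2}(1), the fixed-point set $\{\chi_1^c=\chi_1\}$ is precisely the set of characters of $M'$ that are trivial on $[c,M']$, which is canonically $\Hom(M'/[c,M'],E^*)$, of cardinality $\#(M'/[c,M']) = \#M' / \#[c,M']$. Here one uses that $E$ is algebraically closed of characteristic prime to $\#M'$ (which holds since $E$ has characteristic different from $2$ and $p$, and $M'$ is a subquotient of $\pi_1(\overline X)^\ab$ whose order one may assume prime to the characteristic — or, staying purely in the abstract setting of Subsection~\ref{Counting}, this is simply absorbed into the standing hypotheses), so that $\#\Hom(A,E^*)=\#A$ for every finite abelian group $A$ appearing here.

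The second main step is to rewrite $\#\{\chi_1^c=\chi_1\}$ in terms of $\#\Hom(M,E^*)=\#M$ and the index $e$. By Lemma~\ref{LemOnTwistEquiv-new2}(5), the restriction map $\Hom(M,E^*)\to\{\chi_1^c=\chi_1\}$ has kernel of order $2$ and image of index $e$; hence $\#\{\chi_1^c=\chi_1\} = e\cdot\#\image = e\cdot\frac{\#\Hom(M,E^*)}{2} = \frac{e}{2}\#\Hom(M,E^*)$. Combining, $\#S = \#\Hom(M',E^*) - \frac{e}{2}\#\Hom(M,E^*)$, and dividing by $2$ gives the claimed formula $\frac12\bigl(\#\Hom(M',E^*)-\frac{e}{2}\#\Hom(M,E^*)\bigr)$.

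The step I expect to require the most care is verifying that the restriction map in Lemma~\ref{LemOnTwistEquiv-new2}(5) is genuinely $2$-to-$1$ onto its image and that the image has index exactly $e$ — i.e., that the arithmetic "$\#\{\chi_1^c=\chi_1\} = \frac{e}{2}\#\Hom(M,E^*)$" is bookkept correctly; but since Lemma~\ref{LemOnTwistEquiv-new2} is taken as given, this reduces to an elementary counting manipulation. The only genuine subtlety is that $e\in\{1,2\}$ by Lemma~\ref{LemOnTwistEquiv-new2}(4), so $\frac{e}{2}\#\Hom(M,E^*)$ is always an integer, and likewise $\#\Hom(M',E^*)-\frac{e}{2}\#\Hom(M,E^*)$ is even, so that the final division by $2$ produces an integer — a sanity check worth recording but not an obstacle.
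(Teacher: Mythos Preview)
Your proof is correct and follows exactly the approach the paper intends: combine Lemma~\ref{LemOnTwistEquiv-new1}(5) with Lemma~\ref{LemOnTwistEquiv-new2}(5) and divide by~$2$. The detour in your first step computing $\#\{\chi_1^c=\chi_1\}$ as $\#M'/\#[c,M']$ via Lemma~\ref{LemOnTwistEquiv-new2}(1) is unnecessary (and its justification that $\#\Hom(A,E^*)=\#A$ is not available in the abstract setting, nor needed), since your second step already gives $\#\{\chi_1^c=\chi_1\}=\frac{e}{2}\#\Hom(M,E^*)$ directly, which is all the stated formula requires.
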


Specializing the above to $E=\overline\BQ_\ell$ and $=\overline\BF_\ell$ yields the following result, where we use the duality between a finite abelian group and its group of characters, as well as~$\ell\neq2$.
\begin{Cor}\label{Cor-EllAdicAndModEllSystems}
The number of strong twist-equivalence classes of stably irreducible dihedral representations of $G$ over $\overline\BQ_\ell$ is $\frac12(\# M'-\frac{e}2 \#M)$.

The number of strong twist-equivalence classes of stably irreducible dihedral representations of $G$ over $\overline\BF_\ell$ is $\frac12(\# M'/M'_\ell-\frac{e}2\# M/M_\ell)$, where for a finite abelian group $A$ by $A_\ell$ we denote its $\ell$-primary part.
\end{Cor}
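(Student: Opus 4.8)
The plan is to obtain both statements by specializing the preceding Corollary, which computes the number of strong twist-equivalence classes of stably irreducible dihedral representations of $G$ over an algebraically closed field $E$ (of characteristic $\neq2,p$) as $\tfrac12(\#\Hom(M',E^*)-\tfrac e2\#\Hom(M,E^*))$. So I would simply take $E=\overline\BQ_\ell$ and then $E=\overline\BF_\ell$, and evaluate the two Hom-groups. Both of these fields are algebraically closed and of characteristic $\neq p$; and their characteristic is $\neq2$ precisely because $\ell\neq2$, so the hypotheses under which the preceding Corollary was established are met.

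The only genuine input is the classical description of the character group of a finite abelian group $A$: if $E$ is algebraically closed then every homomorphism $A\to E^*$ kills the $p$-primary part $A_p$ of $A$, where $p=\operatorname{char}E$ (with the convention $A_0:=0$), since $E^*$ contains no nontrivial $p$-power root of unity; conversely $E^*$ contains a cyclic group of $n$-th roots of unity for every $n$ prime to $p$, so decomposing $A/A_p$ into cyclic factors gives a (non-canonical) isomorphism $\Hom(A,E^*)\cong A/A_p$. In particular $\#\Hom(A,E^*)=\#(A/A_p)$, which is just $\#A$ when $\operatorname{char}E=0$.

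Applying this with $E=\overline\BQ_\ell$ yields $\#\Hom(M',E^*)=\#M'$ and $\#\Hom(M,E^*)=\#M$, and substituting into $\tfrac12(\#\Hom(M',E^*)-\tfrac e2\#\Hom(M,E^*))$ gives $\tfrac12(\#M'-\tfrac e2\#M)$. Applying it with $E=\overline\BF_\ell$ yields $\#\Hom(M',E^*)=\#(M'/M'_\ell)$ and $\#\Hom(M,E^*)=\#(M/M_\ell)$, and substituting gives $\tfrac12(\#(M'/M'_\ell)-\tfrac e2\#(M/M_\ell))$; this is exactly the asserted formula, since in the statement $A_\ell$ denotes the $\ell$-primary part of $A$.

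Since this is pure specialization, I do not expect a real obstacle; the one point worth a sentence is the compatibility of the isomorphism $\Hom(A,E^*)\cong A/A_p$ with the conjugation action of the chosen $c\in G\setminus H$ that underlies the preceding Corollary. This is immediate: for $A\in\{M',M\}$ the subgroup $A_\ell$ is characteristic, hence $c$-stable, so the $c$-action descends to $A/A_\ell$ and the bookkeeping with the conditions $\chi_1=\chi_1^c$ carries over verbatim. Finally, the hypothesis $\ell\neq2$ enters only through the standing assumption $\operatorname{char}E\neq2$ of this section, used in deriving the preceding results (e.g.\ Lemma~\ref{LemOnTwistEquiv-new1}(1) and Lemma~\ref{LemOnTwistEquiv-new2}(5)), which in particular guarantees that the correction term $\tfrac e2\#\Hom(M,E^*)$ is a non-negative integer.
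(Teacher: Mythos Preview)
Your proof is correct and follows precisely the paper's own approach: the paper simply states that one specializes the preceding Corollary to $E=\overline\BQ_\ell$ and $E=\overline\BF_\ell$, using the duality between a finite abelian group and its group of characters together with $\ell\neq2$, which is exactly what you have spelled out. Your final paragraph on compatibility of the $c$-action with the isomorphism $\Hom(A,E^*)\cong A/A_p$ is unnecessary, since the preceding Corollary already reduces everything to the bare cardinalities $\#\Hom(M',E^*)$ and $\#\Hom(M,E^*)$; but it does no harm.
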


\medskip

The last results in this section concern the number of lifts to $\overline\BQ_\ell$ of a given dihedral representation of $G$ over $\overline\BF_\ell$. We write $\overline\BZ_\ell$ for the ring of integers of $\overline\BQ_\ell$. The following well-known result, based on the theorem of Brauer and Nesbitt, is relevant:
\begin{Prop}
Any representation $\rho\colon G\to\GL_n(\overline\BQ_\ell)$ is conjugate to a representation $\rho'$ whose image lies in $\GL_n(\overline\BZ_\ell)$. The semisimplification of the reduction of $\rho'$ to $\GL_n(\overline\BF_\ell)$ is independent of the chosen $\rho'$.
\end{Prop}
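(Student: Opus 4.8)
The plan is to first reduce to a finite extension of $\BQ_\ell$, then perform the classical lattice‑averaging argument, and finally invoke Brauer--Nesbitt for the independence statement. Since $\overline\BQ_\ell$ carries the discrete topology and $G$ is profinite, $\Gamma:=\image(\rho)$ is finite; hence the finitely many matrix entries of the elements of $\Gamma$ generate a finite extension $K/\BQ_\ell$ inside $\overline\BQ_\ell$, and $\rho$ factors through $\GL_n(K)$. The ring of integers $\cO_K$ is a complete discrete valuation ring, in particular a principal ideal domain, with residue field a finite extension of $\BF_\ell$ contained in $\overline\BF_\ell$; this passage to $K$ is exactly what allows us to sidestep the fact that $\overline\BZ_\ell$ is neither noetherian nor a PID.

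For the first assertion I would form the $\cO_K$‑module $L=\sum_{\gamma\in\Gamma}\gamma\cdot\cO_K^n\subset K^n$. As $\Gamma$ is finite, $L$ is finitely generated over $\cO_K$; being a submodule of $K^n$ it is torsion free, hence free over the PID $\cO_K$; and since $\cO_K^n\subseteq L\subseteq K^n$ its rank is $n$. By construction $L$ is $\Gamma$‑stable. Choosing an $\cO_K$‑basis of $L$ and letting $g\in\GL_n(K)$ be the change‑of‑basis matrix from the standard basis, one gets $g^{-1}\rho g\colon G\to\GL_n(\cO_K)\subseteq\GL_n(\overline\BZ_\ell)$, which is the desired $\rho'$.

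For the second assertion, let $\rho'$ and $\rho''$ be two conjugates of $\rho$ with image in $\GL_n(\overline\BZ_\ell)$, and let $\bar\rho',\bar\rho''\colon G\to\GL_n(\overline\BF_\ell)$ be their reductions, obtained by reducing matrix entries modulo the maximal ideal of $\overline\BZ_\ell$; both factor through finite quotients of $\Gamma$. For each $g\in G$ the eigenvalues of $\rho(g)$ are roots of unity (as $\Gamma$ is finite), hence units in $\overline\BZ_\ell$, so the characteristic polynomial $P_g(T):=\det(T\cdot\Id_n-\rho(g))$ lies in $\overline\BZ_\ell[T]$ and is a conjugation invariant. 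Consequently the characteristic polynomials of $\bar\rho'(g)$ and $\bar\rho''(g)$ both equal the reduction of $P_g(T)$ modulo the maximal ideal of $\overline\BZ_\ell$, for every $g\in G$. By the theorem of Brauer and Nesbitt, a semisimple representation over a field is determined up to isomorphism by the family of characteristic polynomials of the images of the group elements; applying this to $(\bar\rho')^{\mathrm{ss}}$ and $(\bar\rho'')^{\mathrm{ss}}$ gives $(\bar\rho')^{\mathrm{ss}}\cong(\bar\rho'')^{\mathrm{ss}}$.

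There is no deep obstacle here: the argument is entirely standard. The only two points requiring a little care are the descent to $K$ (so that lattice averaging takes place over a PID rather than over $\overline\BZ_\ell$) and the correct form of Brauer--Nesbitt over the positive‑characteristic field $\overline\BF_\ell$, where one must compare characteristic polynomials and not merely traces.
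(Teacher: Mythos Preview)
Your proof is correct and is the standard argument. Note however that the paper does not actually prove this proposition: it merely introduces it as ``the following well-known result, based on the theorem of Brauer and Nesbitt'' and then states it without proof. So there is nothing to compare against beyond the observation that your argument is precisely the classical one the paper is alluding to. One very minor point: in the second part you need not argue via eigenvalues being roots of unity to see that $P_g(T)\in\overline\BZ_\ell[T]$; once you know $\rho'$ takes values in $\GL_n(\overline\BZ_\ell)$ this is immediate, and conjugation invariance does the rest.
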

By $\bar\rho$ we will denote the semisimplification of a reduction of a representation $\rho\colon G\to\GL_n(\overline\BQ_\ell)$.

\smallskip

We remark that any character $\chi$ to $\overline\BQ^*_\ell$ can be written uniquely as a product $\chi=\chi_\ell\chi^\ell$, where $\chi_\ell$ has order a power of $\ell$ and $\chi^\ell$ has order prime to $\ell$. The proof of the following simple result on the reduction of characters and of dihedral representation is left to the reader.
\begin{Lem}
\begin{enumerate}
\item The map 
\[\{\chi\in \Hom(M',\overline\BQ_\ell^*)
\}
\longrightarrow \Hom(M',\overline\BF_\ell^*), \,\chi\longmapsto\bar\chi ,\]
is an epimorphism of abelian groups with kernel $\{\chi\in \Hom(M',\overline\BQ_\ell^*)\mid \chi=\chi_\ell \}$. It satisfies $\overline\chi=\overline{\chi^\ell}$.
\item The reduction of $\Ind_H^G\chi$ is $\Ind_H^G\bar\chi$.
The map
\[
\{
\Ind_H^G \chi\mid \chi\in \Hom_{\cts}(H,\overline\BQ_\ell^*) \}/\!\!\approx
\ \longrightarrow
\{
\Ind_H^G \bar\chi\mid \bar\chi\in \Hom_{\cts}(H,\overline\BF_\ell^*) \}/\!\!\approx,\, \rho\mapsto \bar\rho,
\]
between strong twist-equivalence classes is surjective. The fibers of stably irreducible representations $\Ind_H^G \bar\chi$ have cardinality $\#M_\ell$. 
\end{enumerate}
\end{Lem}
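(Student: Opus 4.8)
The plan is to treat the two parts in turn; everything is elementary, the only content being bookkeeping with roots of unity and with the involution given by conjugation by $c$. For part~1 the input is the structure of the reduction map on roots of unity. Write $\overline\BZ_\ell$ for the ring of integers of $\overline\BQ_\ell$ and $\Fm$ for its maximal ideal. Any continuous $\chi\colon M'\to\overline\BQ_\ell^*$ has finite image, hence takes values in the group of roots of unity $\mu(\overline\BQ_\ell)\subset\overline\BZ_\ell^*$; the reduction $\overline\BZ_\ell^*\to\overline\BF_\ell^*$ carries $\mu(\overline\BQ_\ell)$ onto $\overline\BF_\ell^*$, kills the $\ell$-primary part $\mu_{\ell^\infty}(\overline\BQ_\ell)$ (there is no $\ell$-torsion in $\overline\BF_\ell^*$), and maps the prime-to-$\ell$ part isomorphically onto $\overline\BF_\ell^*$ (Hensel's lemma, the Teichm\"uller lift). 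Applying $\Hom(M',-)$ and using the canonical factorisation $\chi=\chi_\ell\chi^\ell$ recorded before the lemma yields $\bar\chi=\overline{\chi^\ell}$, surjectivity of $\chi\mapsto\bar\chi$ (lift a given $\bar\chi$ to its Teichm\"uller, prime-to-$\ell$, character), and the identification of the kernel of $\chi\mapsto\bar\chi$ with the group of $\chi$ having $\chi^\ell=1$, i.e.\ with $\chi=\chi_\ell$. I would moreover record here that this kernel equals $\Hom(M',\mu_{\ell^\infty}(\overline\BQ_\ell))\cong\Hom(M'_\ell,\BQ_\ell/\BZ_\ell)$ and hence has order $\#M'_\ell$; this constant controls the fibers in part~2.

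For the first assertion of part~2: since $\chi\colon H\to\overline\BQ_\ell^*$ has finite image it is valued in $\overline\BZ_\ell^*$, so $\Ind_H^G\chi$ is realised on the free rank-$[G:H]=2$ module $\overline\BZ_\ell[G]\otimes_{\overline\BZ_\ell[H]}(\overline\BZ_\ell)_\chi$, whose reduction modulo $\Fm$ is precisely $\Ind_H^G\bar\chi$. Since $[G:H]=2$ is prime to $\ell$ (we assume $\ell\neq2$) and $\bar\chi$ is one-dimensional, $\Ind_H^G\bar\chi$ is semisimple, hence equals its own semisimplification; by the theorem of Brauer--Nesbitt this semisimplification does not depend on the chosen lattice, so $\overline{\Ind_H^G\chi}=\Ind_H^G\bar\chi$. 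The induced map on strong twist-equivalence classes is then well defined, since $\rho'\cong\rho\otimes\chi_0$ with $\chi_0$ trivial on $\kernel\kappa$ gives $\bar\rho'\cong\bar\rho\otimes\bar\chi_0$ with $\bar\chi_0$ again trivial on $\kernel\kappa$; and it is surjective because a continuous $\bar\chi\colon H\to\overline\BF_\ell^*$ has prime-to-$\ell$ order, so its Teichm\"uller lift $\chi\colon H\to\overline\BQ_\ell^*$ satisfies $\overline{\Ind_H^G\chi}=\Ind_H^G\bar\chi$.

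For the fiber count I would invoke Lemma~\ref{LemOnTwistEquiv-new1}(5) over both $\overline\BQ_\ell$ and $\overline\BF_\ell$ to replace the set of strong twist-equivalence classes of stably irreducible dihedral representations of $G$ by the set of unordered pairs $\{\chi_1,\chi_1^c\}$ of characters of $M'$ with $\chi_1\neq\chi_1^c$; under these identifications (and using $\overline{\Ind_H^G\chi}=\Ind_H^G\bar\chi$) the reduction map is the one induced by the $c$-equivariant reduction homomorphism $r\colon\Hom(M',\overline\BQ_\ell^*)\to\Hom(M',\overline\BF_\ell^*)$ of part~1. Fixing a stably irreducible target, i.e.\ a pair $\{\bar\psi,\bar\psi^c\}$ with $\bar\psi\neq\bar\psi^c$, the characters $\chi_1$ whose pair reduces to it are exactly those in $r^{-1}(\bar\psi)\sqcup r^{-1}(\bar\psi^c)$, a disjoint union (as $\bar\psi\neq\bar\psi^c$) of two cosets of $\kernel r$; each such $\chi_1$ has non-$c$-fixed image under $r$, hence is itself non-$c$-fixed, so $c$ acts freely on this set of $2\,\#\kernel r$ characters and the number of pairs over $\{\bar\psi,\bar\psi^c\}$ is $\#\kernel r=\#M'_\ell$.

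I expect the last step to be the main obstacle, and in particular the constant appearing there: I would flag that the correct value is $\#M'_\ell$, the $\ell$-part of $M'$ and not of $M$. Globally the reduction map on twist-classes is not everywhere $\#M'_\ell$-to-one (over a target coming from a $c$-fixed character of $M'$ the interaction of $c$ with $\kernel r$ alters the orbit count), and it is only on the stably irreducible locus, where $c$ acts freely on the relevant preimages, that the uniform value $\#M'_\ell$ emerges. As a consistency check I would confirm that assembling this count with the contribution of the orbits lying over $c$-fixed targets reproduces the passage, recorded in Corollary~\ref{Cor-EllAdicAndModEllSystems}, from $\tfrac12(\#M'-\tfrac e2\#M)$ over $\overline\BQ_\ell$ to $\tfrac12(\#M'/\#M'_\ell-\tfrac e2\#M/\#M_\ell)$ over $\overline\BF_\ell$.
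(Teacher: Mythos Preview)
The paper leaves this proof to the reader, so there is no argument to compare against; your write-up is correct and is exactly the kind of elementary verification the authors have in mind.

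More to the point, you have correctly spotted that the constant in the statement should be $\#M'_\ell$, not $\#M_\ell$. The fiber computation takes place entirely on the $M'$ side: by Lemma~\ref{LemOnTwistEquiv-new1}(4)--(5), strong twist-equivalence classes are parametrized by characters of $M'$ (or unordered $c$-pairs thereof in the stably irreducible case), and the kernel of the reduction map $\Hom(M',\overline\BQ_\ell^*)\to\Hom(M',\overline\BF_\ell^*)$ has order $\#M'_\ell$. Your free-$c$-action argument over a stably irreducible target $\{\bar\psi,\bar\psi^c\}$ is clean and gives the fiber size $\#M'_\ell$; there is no mechanism by which $M$ (rather than $M'$) could enter this particular count. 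The Remark immediately following the lemma, to the effect that the two numbers in Corollary~\ref{Cor-EllAdicAndModEllSystems} do not differ by a single multiplicative factor, remains valid and indeed better motivated with $\#M'_\ell$: the discrepancy is precisely that some stably irreducible $\ell$-adic dihedral representations reduce to non-stably-irreducible ones, so the $\overline\BQ_\ell$-count is not $\#M'_\ell$ times the $\overline\BF_\ell$-count. Your proposed consistency check against Corollary~\ref{Cor-EllAdicAndModEllSystems} is the right way to confirm this.
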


\begin{Rem}
Observe that the two numbers in Corollary~\ref{Cor-EllAdicAndModEllSystems} do not differ by a factor of $\#M_\ell$. This is so because the mod $\ell$ reduction of a dihedral representation may become a sum of characters.
\end{Rem}

\subsection{Growth of $\ell$-adic and mod $\ell$-systems of dihedral representations}
Suppose in the following that $\ell$ is a prime different from $2$, and that the genus $g$ of $X$ is at least~$2$.

We begin by recalling the main theorem of unramified geometric class field theory for $X$:
\begin{Thm}\label{ThmGeomCFT}
There is a commutative diagram
\[
\xymatrix{
0\ar[r]&\Pic^0(X)(\BF_q)\ar[d]^\simeq\ar[r]& \Pic(X)(\BF_q)\ar[d]^{[x]\mapsto \Frob_x}\ar[r]^-\deg& \BZ\ar[d]^{n\mapsto\Frob^n_{\BF_q}}\ar[r]&0 \\
0\ar[r]&\kernel(\res)\ar[r]& \pi_1^\ab(X)\ar[r]^-\res& G_{\BF_q}
\ar[r]&0 \rlap{,}
}\]
where the left vertical map is an isomorphism, the central and right vertical maps are injective with dense image, the central homomorphism is characterized by sending any (Weil) divisor $[x]$, for $x\in |X|$, to the well-defined Frobenius automorphism $\Frob_x\in\pi_1^\ab(X)$, and the homomorphism $\res$ is the restriction of the action of $\pi_1(X)$ to the unramified Galois pro-cover $\overline X$ of $X$ with group isomorphic to~ $G_{\BF_q}$.
\end{Thm}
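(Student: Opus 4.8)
The plan is to recognize the statement as unramified geometric class field theory for $X$ and to assemble it from the theory of the Jacobian, isolating the one nontrivial input --- the reciprocity law --- at the end. First I would fix the bookkeeping: let $J=\Pic^0_{X/\BF_q}$ be the Jacobian, an abelian variety of dimension $g$ over $\BF_q$, so that $\Pic^0(X)(\BF_q)=J(\BF_q)$, and using Hilbert~90 and $\mathrm{Br}(\BF_q)=0$ one gets $\Pic(X)(\BF_q)=\Pic(\overline X)^{G_{\BF_q}}$. By F.~K.~Schmidt's theorem $X$ carries a divisor $D_0$ of degree~$1$, hence $\deg$ is surjective and the top row is exact by the definition of $\Pic^0$; since $X$ is geometrically connected, $\pi_1(X)\to G_{\BF_q}$ and a fortiori $\res$ are surjective, so the bottom row is exact by the definition of $\kernel(\res)$; and the right vertical map is the inclusion $\BZ\hookrightarrow\widehat\BZ\cong G_{\BF_q}$, which is obviously injective with dense image.

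Next I would construct the left vertical isomorphism. Here $\kernel(\res)$ is the image of $\pi_1(\overline X)$ in $\pi_1^{\ab}(X)$, which is the coinvariant quotient $(\pi_1(\overline X)^{\ab})_{G_{\BF_q}}$ (the obstruction term in the Hochschild--Serre sequence vanishes, $\widehat\BZ$ having cohomological dimension~$1$). The divisor $D_0$ gives an Albanese morphism $\varphi\colon X\to J$ over $\BF_q$, and $\varphi_\ast$ induces a $G_{\BF_q}$-equivariant isomorphism $\pi_1(\overline X)^{\ab}\xrightarrow{\ \sim\ }\pi_1(\overline J)=\prod_\ell T_\ell J$, the full Tate module of $J$ (prime-to-$p$ this is the duality with $H^1_{\mathrm{et}}(-,\BZ/m)$, the $p$-part being Artin--Schreier--Witt theory). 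By Lang's theorem the isogeny $F_J-\id\colon J\to J$, with $F_J$ the $q$-Frobenius endomorphism, is a connected finite \'etale Galois covering with group $J(\BF_q)$, and on fundamental groups this yields a canonical isomorphism $(\pi_1(\overline J))_{G_{\BF_q}}\xrightarrow{\ \sim\ }J(\BF_q)$; composing gives $\Pic^0(X)(\BF_q)\xrightarrow{\ \sim\ }\kernel(\res)$. Alternatively, and more quickly, for $F=\BF_q(X)$ global class field theory identifies $\pi_1^{\ab}(X)$ with the profinite completion of $\BA_F^\times/(F^\times\prod_x\cO_x^\times)=\Pic(X)(\BF_q)$, and $\Pic^0(X)(\BF_q)$ is finite, hence complete.

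Then I would build the central map. For a closed point $x\in|X|$ the Frobenius conjugacy class at $x$ becomes a well-defined element $\Frob_x\in\pi_1^{\ab}(X)$ with $\res(\Frob_x)=\Frob_{\BF_q}^{\deg x}$, so extending additively defines $\rec\colon\mathrm{Div}(X)\to\pi_1^{\ab}(X)$ for which the right square commutes; since $\res(\rec(D))=\Frob_{\BF_q}^{\deg D}$, the restriction of $\rec$ to degree-zero divisors lands in $\kernel(\res)$. The reciprocity law then says precisely that this restriction factors as $\mathrm{Div}^0(X)\twoheadrightarrow\Pic^0(X)(\BF_q)\xrightarrow{\ \sim\ }\kernel(\res)$ with the isomorphism of the previous step. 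Granting it, $\rec$ kills principal divisors (which have degree~$0$), hence descends to the central homomorphism $\Pic(X)(\BF_q)\to\pi_1^{\ab}(X)$, and the left square commutes. A class in the kernel of this map has degree~$0$ by the right square, so it vanishes by the left isomorphism; thus the map is injective, and since its image contains $\kernel(\res)$ and surjects onto the dense subgroup $\BZ\subset\widehat\BZ$, it has dense image.

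The hard part is exactly the reciprocity law --- equivalently, the Frobenius-compatibility of the Albanese morphism $\varphi$ with the Lang covering (or, in the adelic formulation, the unramified-local behaviour of the Artin reciprocity map). Everything else is group theory together with the finiteness facts for $J$ coming from Weil's Riemann hypothesis for curves and from F.~K.~Schmidt's theorem; only this step genuinely uses the geometry of the Jacobian, respectively the full strength of global class field theory for $\BF_q(X)$, and I would import it from one of the standard references rather than reprove it here.
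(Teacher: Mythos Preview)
The paper does not actually prove this theorem: it is introduced with the words ``We begin by recalling the main theorem of unramified geometric class field theory for $X$'' and is stated without proof, the authors treating it as a standard input. Your proposal therefore goes well beyond what the paper does --- you supply a genuine proof outline where the paper simply cites the result.

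That said, your outline is correct and is one of the standard routes to unramified geometric class field theory: build the map on divisors via Frobenius elements, identify $\kernel(\res)$ with $J(\BF_q)$ through the Lang isogeny $F_J-\id$ on the Jacobian (or equivalently via the id\`ele-theoretic formulation), and isolate the reciprocity law as the one substantive step that makes the left square commute. The alternative you mention --- reading everything off from the identification of $\pi_1^{\ab}(X)$ with the profinite completion of $\Pic(X)(\BF_q)$ coming from global class field theory for $\BF_q(X)$ --- is in fact the shortest path if one is willing to import that theory wholesale, and is closest in spirit to how the paper uses the theorem (as a black box). Either way, there is nothing to correct; your write-up would serve perfectly well as the proof the paper omits.
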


We call a character of $\pi_1(X)$ a {\em character of the base} if it is the inflation of a character of $\pi_1(\overline X)$. Then the above theorem has the following immediate consequence.
\begin{Cor}
The homomorphism $ \Pic(X)(\BF_q)\to  \pi_1^\ab(X)$ of the previous theorem induces a bijection  
\[ \left\{
\begin{array}{c}
\mbox{finite order characters}\\
\Pic^0(X)(\BF_q)\to\BQ_\ell^*
\end{array}
\right\}
\longleftrightarrow
\left\{
\begin{array}{c}
\mbox{characters of }
\pi_1(X)^\ab\to\BQ_\ell^*\\
\mbox{up to twists by characters of the base.}
\end{array}
\right\}\]
\end{Cor}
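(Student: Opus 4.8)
The plan is to read off the statement directly from the commutative diagram of Theorem~\ref{ThmGeomCFT}. First I would recall that the map $\Pic(X)(\BF_q)\to\pi_1^\ab(X)$ has dense image, so that precomposition with it induces an \emph{injection} from continuous characters of $\pi_1(X)^\ab$ to characters of $\Pic(X)(\BF_q)$; since both sides consist of finite-order characters (the target $\BQ_\ell^*$ has no small $\ell$-divisible subgroups and all our representations have finite image, as $E=\overline\BQ_\ell$ carries the discrete topology in the relevant conventions), this injection is in fact a bijection onto the finite-order characters of $\Pic(X)(\BF_q)$. Likewise, using the isomorphism $\Pic^0(X)(\BF_q)\xrightarrow{\ \simeq\ }\kernel(\res)$ in the left column, characters of $\pi_1(\overline X)$ (equivalently, of $\kernel(\res)$, since the Frobenius quotient is $\hat\BZ$ and these are the characters we inflate) correspond bijectively to characters of $\Pic^0(X)(\BF_q)$.

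Next I would identify the equivalence relation. A character of the base is, by definition, the inflation to $\pi_1(X)$ of a character of $\pi_1(\overline X)=\kernel(\res)$; under the dictionary above, twisting a character $\psi$ of $\pi_1(X)^\ab$ by such a base character corresponds exactly to multiplying the associated character of $\Pic(X)(\BF_q)$ by a character that is pulled back from $\Pic(X)(\BF_q)/\Pic^0(X)(\BF_q)\cong\BZ$ via $\deg$. Hence ``characters of $\pi_1(X)^\ab$ up to twists by characters of the base'' corresponds to ``finite-order characters of $\Pic(X)(\BF_q)$ modulo characters factoring through $\deg$.'' The restriction map from characters of $\Pic(X)(\BF_q)$ to characters of $\Pic^0(X)(\BF_q)$ is surjective with kernel precisely the characters trivial on $\Pic^0(X)(\BF_q)$, i.e.\ those factoring through $\deg$ — this is just the statement that $\Pic^0$ is a direct summand-type situation up to the observation that every character of a subgroup of a finite (or profinite) abelian group extends. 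Combining, the composite bijection sends a class $[\psi]$ to the restriction to $\Pic^0(X)(\BF_q)$ of the corresponding character of $\Pic(X)(\BF_q)$, and this is well-defined and bijective.

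Finally I would package these identifications into the single displayed bijection, checking compatibility: a finite-order character $\chi^0\colon\Pic^0(X)(\BF_q)\to\BQ_\ell^*$ is pulled back, via the left isomorphism, to a character of $\kernel(\res)$, inflated to $\pi_1(X)^\ab$ (possible since $\kernel(\res)$ is the kernel of the degree/restriction map and $\BZ\subset\hat\BZ$ is dense, so any extension is harmless after restricting attention to finite-order, hence continuous, characters), giving a well-defined class modulo characters of the base; conversely any class restricts to such a $\chi^0$. I expect the only genuinely delicate point to be the extension/continuity bookkeeping: making sure that ``finite-order character of $\Pic^0(X)(\BF_q)$'' matches ``continuous character of $\pi_1(\overline X)$'' under the isomorphism of Theorem~\ref{ThmGeomCFT} (the image $\Pic(X)(\BF_q)$ is only dense, not all of $\pi_1^\ab(X)$), and that twisting really is by the full group of degree-characters and nothing more. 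Everything else is a formal diagram chase, so I would keep the write-up to a few lines invoking Theorem~\ref{ThmGeomCFT} and the surjectivity of restriction of characters along $\Pic^0\hookrightarrow\Pic$.
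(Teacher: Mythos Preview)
Your overall diagram chase is exactly what the paper has in mind when it calls the corollary an ``immediate consequence'' of Theorem~\ref{ThmGeomCFT}; the paper gives no separate proof. However, there is a genuine confusion in your identification of the ``characters of the base''. Despite the paper's own unfortunate wording, a character of the base is the inflation to $\pi_1(X)$ of a character of the \emph{quotient} $G_{\BF_q}$ (the Galois group of the base field), not of the subgroup $\pi_1(\overline X)$ --- one cannot inflate from a subgroup. Your sentence that sets $\pi_1(\overline X)=\kernel(\res)$ and then, in the same breath, asserts that twisting by such a character corresponds to multiplication by a character pulled back through $\deg$ is internally inconsistent: characters supported on $\kernel(\res)\cong\Pic^0(X)(\BF_q)$ are those \emph{trivial} on the degree, whereas characters factoring through $\deg$ are those trivial on $\Pic^0$. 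It is the latter that are the characters of the base.

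Once you make this correction, your argument is fine and is the intended one: under the dense inclusion $\Pic(X)(\BF_q)\hookrightarrow\pi_1^\ab(X)$, continuous (hence finite-order, by the discrete-target convention) characters of $\pi_1^\ab(X)$ correspond to finite-order characters of $\Pic(X)(\BF_q)$; the split exact sequence $0\to\Pic^0(X)(\BF_q)\to\Pic(X)(\BF_q)\to\BZ\to0$ then shows that restriction to $\Pic^0$ is surjective on finite-order characters with kernel exactly the finite-order characters of $\BZ$, which are precisely the characters of $G_{\BF_q}$. Your final paragraph should likewise speak of \emph{extending} $\chi^0$ from $\kernel(\res)$ to $\pi_1(X)^\ab$, not of inflating it.
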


\medskip

Our main concern are dihedral representations. 
Let $f\colon X'\to X$ be an unramified degree $2$ Galois cover of $X$ by a smooth projective curve $X'$ that is again geometrically connected over $\BF_q$. We write $\{1,c\}$ for $\Aut_X(X')$
. The automorphism $c$ induces an involution $c^*$ on the Jacobian $\Pic^0(X')$ of $X'$. 
To relate the above to the axiomatic setting of \ref{Counting}, we display the following commutative diagram with exact rows:
\[\xymatrix{
0 \ar[r]&  \Pic^0(X')(\BF_{q^n})\ar[r]\ar[d]& \pi_1(X'_n)^\ab\ar[r]\ar[d]&G_{\BF_{q^n}}
\ar[r]\ar@{=}[d]&0\\
0 \ar[r]&  \Pic^0(X)(\BF_{q^n})\ar[r]& \pi_1(X_n)^\ab\ar[r]&G_{\BF_{q^n}} 
\ar[r]&0\rlap{.}\\
}\]

\medskip

Before we state results on counting dihedral representation of $\pi_1(X_n)$ over $\overline\BQ_\ell$ and $\overline\BF_\ell$ for $n\to\infty$, we collect some basic results on the number of unramified degree $2$ Galois covers $f\colon X'_n\to X_n$, when $n$ varies: From Theorem~\ref{ThmGeomCFT} it follows that for each $n$ the set of degree $2$ unramified Galois covers of $X$ is in bijection with the index $2$ subgroups of 
$$\Pic^0(X)(\BF_{q^n})\otimes\BZ/2\oplus\Gal(\BF_{q^{2n}}/\BF_{q^{n}})\cong \pi_1^\ab(X_n)\otimes\BZ/2.$$
Let $X_n^{(i)} \to X_n$, $i=1,2$ be two such covers, and suppose that $\chi_i\colon \pi_1(X_n^{(i)})\to E^*$ are two characters, such that 
\[\Ind_{\pi_1(X_n^{(1)})}^{\pi_1(X_n)}\chi_1\approx\Ind_{\pi_1(X_n^{(2)})}^{\pi_1(X_n)}\chi_2.\] 
Then there is an isomorphism $X^{(1)}_{2n}\cong X^{(2)}_{2n}$ as coverings of~$X_n$. Hence if we wish to count unramified dihedral representation of $\pi_1(X_n)$ up to (strong) twist equivalence, then the relevant quadratic covers of $X_n$ are labelled by the index two subgroups of $\Pic^0(X)(\BF_{q^n})\otimes/\BZ/2$. The number of such labels is this equal to $\#\Pic^0(X)[2](\BF_{q^n})-1$.

To organize the above, for a non-trivial homomorphism $\beta\colon \pi_1(\overline X)\to\{\pm1\}$ we denote by $n_\beta$ the smallest $n$ such $\beta$ has an extension to $\pi_1(X_{n_\beta})\to\{\pm1\}$. Note that all $n$ such that $\beta$ extends to $\pi_1(X_{n})$ are multiples of $n_\beta$. By $f_\beta\colon X_{n_\beta}^{(\beta)}\to X_{n_\beta}$ we denote the corresponding unramified degree $2$ cover. The number of such $\beta$ is $2^{2g}-1$ where $g$ is the genus of $X$. 
For positive integers $n,m$ we define $\delta_{m|n}$  to be $1$ if $m$ divides $n$ and $0$ otherwise. Choose for each $\beta$ a number $e_\beta\in\{1,2\}$ according to Lemma~\ref{LemOnTwistEquiv-new2} part 4. 
Then we deduce from Corollary~\ref{Cor-EllAdicAndModEllSystems}:
\begin{Prop}
Let $\ell$ be an odd prime. The number of stably irreducible dihedral representation of $\pi_1(X_n)$ over $\overline\BQ_\ell^*$ up to twist by characters of the base is
\[ \sum_{\beta} \frac{\delta_{n_\beta|n} }2 \Big(\# \Pic^0(X_{n_\beta}^{(\beta)})(\BF_{q^n})-\frac{e_\beta}2 \#\Pic^0(X)(\BF_{q^n})\Big) .\]

The number of stably irreducible dihedral representation of $\pi_1(X_n)$ over $\overline\BF_\ell^*$ up to twist by characters of the base is
\[ \sum_{\beta} \frac{\delta_{n_\beta|n} }2 \Big(\# \frac{\Pic^0(X_{n_\beta}^{(\beta)})(\BF_{q^n})}{\Pic^0(X_{n_\beta}^{(\beta)})(\BF_{q^n})_\ell}-\frac{e_\beta}2 \frac{\#\Pic^0(X)(\BF_{q^n})}{\#\Pic^0(X)(\BF_{q^n})_\ell}\Big) .\]
\end{Prop}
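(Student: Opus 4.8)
The plan is to deduce the two formulas by applying Corollary~\ref{Cor-EllAdicAndModEllSystems} once for each geometrically connected unramified degree $2$ cover of $X_n$ and summing the contributions. The starting point is that a stably irreducible dihedral representation $\rho$ of $G:=\pi_1(X_n)$ is induced from an index $2$ subgroup $H$ with $\pi_1(\overline X)\not\subset H$: if one had $\pi_1(\overline X)\subset H$, then $\rho|_{\pi_1(\overline X)}$ would be a sum of two characters, contradicting stable irreducibility. Hence $H=\pi_1(Y)$ for a geometrically connected unramified degree $2$ cover $Y\to X_n$, and, by the discussion preceding the Proposition, the relevant such covers are indexed by the nontrivial homomorphisms $\beta\colon\pi_1(\overline X)\to\{\pm1\}$ that extend to $\pi_1(X_n)$, i.e., those with $n_\beta\mid n$; the cover attached to $\beta$ is the base change $X_n^{(\beta)}$ of $f_\beta\colon X_{n_\beta}^{(\beta)}\to X_{n_\beta}$ to $\BF_{q^n}$, and the factor $\delta_{n_\beta|n}$ in the formula records exactly this constraint.

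Fix $\beta$ with $n_\beta\mid n$ and put $H=\pi_1(X_n^{(\beta)})$. I would first check that $(G,H)$ meets the hypotheses of \S\ref{Counting}: since $\pi_1(\overline X)\not\subset H$ and $[G:H]=2$, the map $\kappa|_H=\res|_H$ is surjective onto $\hat\BZ\cong G_{\BF_{q^n}}$ with kernel $\pi_1(\overline{X_n^{(\beta)}})$, so that unramified geometric class field theory (Theorem~\ref{ThmGeomCFT}), applied to both $X_n^{(\beta)}$ and $X_n$, identifies diagram~(\ref{DiagramForAbstDihedral}) with the commutative diagram displayed just above the Proposition. Thus
\[M'=\Pic^0(X_n^{(\beta)})(\BF_{q^n})=\Pic^0(X_{n_\beta}^{(\beta)})(\BF_{q^n}),\qquad M=\Pic^0(X_n)(\BF_{q^n})=\Pic^0(X)(\BF_{q^n}),\]
using that formation of $\Pic^0$ commutes with the base changes $\BF_{q^{n_\beta}}\to\BF_{q^n}$ and $\BF_q\to\BF_{q^n}$. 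Since $\ell\neq2$, Corollary~\ref{Cor-EllAdicAndModEllSystems} then computes the number of strong twist-equivalence classes of stably irreducible dihedral representations of $G$ induced from this $H$ to be $\tfrac12(\#M'-\tfrac{e_\beta}2\#M)$ over $\overline\BQ_\ell$ and $\tfrac12(\#M'/M'_\ell-\tfrac{e_\beta}2\#M/M_\ell)$ over $\overline\BF_\ell$, with $e_\beta\in\{1,2\}$ the index assigned to $H\subset G$ by Lemma~\ref{LemOnTwistEquiv-new2}(4).

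It remains to sum over $\beta$. Since $\rho$ determines $H$, and hence the geometric cover $\overline{X_n^{(\beta)}}\to\overline X$, and hence $\beta$ --- this is the content of the remark before the Proposition that an $\approx$-equivalence of the induced representations forces $X^{(1)}_{2n}\cong X^{(2)}_{2n}$ --- the twist-equivalence classes attached to distinct $\beta$ are pairwise disjoint, and their union is the whole set being counted; inserting $\delta_{n_\beta|n}$ so that only the contributing $\beta$ enter the sum yields the two displayed formulas. The step that demands the most care, and that I would regard as the main obstacle, is precisely this bookkeeping at the level of covers: one must match ``twist by characters of the base'' in the statement with ``strong twist-equivalence over $G$'' of \S\ref{Counting} (so that $\ker\kappa=\pi_1(\overline X)$ is the right kernel), verify that distinct $\beta$ contribute disjointly, and see that $e_\beta$ depends on $\beta$ alone and not on the auxiliary choice of an extension of $\beta$ to a quadratic character of $\pi_1(X_n)$. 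Once these identifications are in place, the result is simply the substitution of $M'$ and $M$ into Corollary~\ref{Cor-EllAdicAndModEllSystems}, summed over $\beta$.
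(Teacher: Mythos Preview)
Your proposal is correct and follows exactly the approach the paper takes: the paper's entire ``proof'' is the single clause ``Then we deduce from Corollary~\ref{Cor-EllAdicAndModEllSystems}'' together with the preceding discussion identifying $M'$ and $M$ via Theorem~\ref{ThmGeomCFT} and explaining that the relevant quadratic covers are parametrized by the $\beta$ with $n_\beta\mid n$. Your write-up in fact supplies more detail than the paper on the disjointness of the contributions from distinct $\beta$ and on why only geometrically connected covers matter.
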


From the Weil conjecture for curves we obtain sequences of real constants $(c_n)$ and $(c_{\beta,n})$ for $n\in\BN$ that satisfy $|c_n|\le q^{-n/2}$ and $|c_{\beta,n}|\le q^{-n/2}$, respectively, such that
\[ \#\Pic^0(X)(\BF_{q^n}) = q^{gn}(1-c_n)^{2g}\quad\hbox{and}\quad  \#\Pic^0(X_{n_\beta}^{(\beta)})(\BF_{q^n})  = q^{(2g-1)n}(1-c_{\beta,n})^{4g-2} .\]
From Proposition~\ref{PropEllPowerTorsion} we obtain 
\begin{itemize}
\item integers $h_\ell>0$ and $h_{\beta,\ell}>0$,
\item for every divisor $d$ of $h_\ell$ or $h_{\beta,\ell}$ integers $g_d\ge0$ and $g_{\beta,d}\ge0$ such that $g_{h_\ell}=2g>g_d$ for $d$ a proper divisor of $h_\ell$ and $g_{\beta,h_{\beta,\ell}}=4g-2>g_d$ for $d$ a proper divisor of $h_{\beta,\ell}$,
\item for any $d$ and $j\ge0$ let $N_{d,j}=\ell^{-g_dj}\#\Pic^0(X)[\ell^\infty](\BF_{q^{d\ell^j}})$ and $N_{\beta,d,j}=\ell^{-g_{\beta,d}j}\#\Pic^0(X_{n_\beta}^{(\beta)})(\BF_{q^{d\ell^j}})$,
\item some $j_\ell\ge0$ such that for all $j\ge j_\ell$ one has\footnote{The constants $N_{\ldots}$ as defined here differ by some $\ell$-powers from those in Proposition~\ref{PropEllPowerTorsion}.}
$$N_{d,j}=N_{d,j_\ell}\quad\hbox{ and }N_{\beta,d,j}=N_{\beta,d,j_\ell},$$
\end{itemize}
such that for any $n\ge0$ and $j(=j(n))=-\log_\ell |n|_\ell$ one has 
\[ \#\Pic^0(X)[\ell^\infty](\BF_{q^{n}})  = N_{\gcd(n,h_\ell),j} \ell^{g_dj} \quad\hbox{and}\quad  \#\Pic^0(X_{n_\beta}^{(\beta)})[\ell^\infty](\BF_{q^{n}})  = N_{\beta,\gcd(n,h_{\beta,\ell}),j} \ell^{g_{\beta,d}j}  .\]
\begin{Thm}\label{Thm-CountingDihedrals}
Let $\ell$ be an odd prime. 
\begin{enumerate}
\item \label{FirstBound}
The number of stably irreducible dihedral representation of $\pi_1(X_n)$ over $\overline\BQ_\ell^*$ up to twist by characters of the base is
\[ q^{(2g-1)n}\cdot \sum_{\beta} \frac{\delta_{n_\beta|n} }2\Big((1-c_{\beta,n})^{4g-2}- \frac{e_\beta}2 q^{-n(g-1)}(1-c_n)^{2g}\Big)  .\]
\item 
The number of stably irreducible dihedral representation of $\pi_1(X_n)$ over $\overline\BF_\ell^*$ up to twist by characters of the base is
\[ q^{(2g-1)n}
 \cdot \sum_{\beta} \frac{\delta_{n_\beta|n}}2  \Big(
\frac{(1-c_{\beta,n})^{4g-2}}{N_{\beta,\gcd(n,h_{\beta,\ell}),j}} q^{ -j  g_{\gcd(n,h_{\beta,\ell})} \log_q \ell}
-\frac{e_\beta}2 
\frac{(1-c_{n})^{2g}}{N_{\gcd(n,h_{\ell}),j}} q^{ -j  g_{\gcd(n,h_\ell)} \log_q \ell}
\Big) 
,\]
%
where as before $j=-\log_\ell|n|_\ell$ is the largest integer such that $\ell^j|n$ -- in particular $0\le j\le \log_\ell n$. 
\end{enumerate}
\end{Thm}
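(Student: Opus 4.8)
The plan is to derive Theorem~\ref{Thm-CountingDihedrals} as a direct substitution of the point-counting formulas and the $\ell$-power torsion analysis into the two formulas of the Proposition immediately preceding it. Recall that Proposition gives, for the $\overline\BQ_\ell$-count,
\[ \sum_{\beta} \frac{\delta_{n_\beta|n} }2 \Big(\# \Pic^0(X_{n_\beta}^{(\beta)})(\BF_{q^n})-\frac{e_\beta}2 \#\Pic^0(X)(\BF_{q^n})\Big), \]
and an analogous formula for the $\overline\BF_\ell$-count with each $\#\Pic^0(-)(\BF_{q^n})$ replaced by its prime-to-$\ell$ part $\#\Pic^0(-)(\BF_{q^n})/\#\Pic^0(-)(\BF_{q^n})_\ell$. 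So the content of the theorem is purely to rewrite the four Picard-group cardinalities appearing there in the desired closed form.

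For part~\ref{FirstBound}, I would simply insert the Weil-conjecture expressions recorded just before the theorem, namely $\#\Pic^0(X)(\BF_{q^n}) = q^{gn}(1-c_n)^{2g}$ and $\#\Pic^0(X_{n_\beta}^{(\beta)})(\BF_{q^n}) = q^{(2g-1)n}(1-c_{\beta,n})^{4g-2}$, with $|c_n|,|c_{\beta,n}|\le q^{-n/2}$. Here one should note that $X_{n_\beta}^{(\beta)}$ is a curve of genus $2g-1$ by Riemann--Hurwitz applied to the unramified degree $2$ cover $X_{n_\beta}^{(\beta)}\to X_{n_\beta}$ (the genus of $X_{n_\beta}$ being $g$, since base change to a larger finite field does not change the genus). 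Pulling the common factor $q^{(2g-1)n}$ out of the sum and writing $q^{gn}=q^{(2g-1)n}q^{-n(g-1)}$ gives exactly the stated formula; this part is entirely routine.

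For part~2 the extra ingredient is Proposition~\ref{PropEllPowerTorsion}, applied separately to the abelian varieties $\Pic^0(X)$ and $\Pic^0(X_{n_\beta}^{(\beta)})$ over $\BF_q$ (the latter viewed over $\BF_q$ once one has fixed $n_\beta$; strictly, one works over $\BF_{q^{n_\beta}}$, and the constants $h_{\beta,\ell}$, $g_{\beta,d}$, $j_\ell$ are those produced by the Proposition for that abelian variety, as already set up in the bulleted list preceding the theorem). That Proposition yields $\#\Pic^0(X)[\ell^\infty](\BF_{q^n}) = N_{\gcd(n,h_\ell),j}\,\ell^{g_{\gcd(n,h_\ell)}\,j}$ and likewise for the cover, where $j=-\log_\ell|n|_\ell$; dividing the full count by this $\ell$-part replaces $q^{gn}(1-c_n)^{2g}$ by
\[ \frac{q^{gn}(1-c_n)^{2g}}{N_{\gcd(n,h_\ell),j}\,\ell^{g_{\gcd(n,h_\ell)}\,j}}, \]
and rewriting $\ell^{g_{\gcd(n,h_\ell)}\,j} = q^{\,j\,g_{\gcd(n,h_\ell)}\log_q\ell}$ moves this factor into the exponent of $q$. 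Doing the same for the cover, factoring $q^{(2g-1)n}$ out of the sum (and absorbing the leftover $q^{-n(g-1)}$ into nothing, since the $\overline\BF_\ell$-side of Corollary~\ref{Cor-EllAdicAndModEllSystems} already has the right normalization — more precisely, one writes the cover-term exponent as $q^{(2g-1)n}$ and the base-term exponent as $q^{(2g-1)n}q^{-n(g-1)}$, exactly as in part~1), produces the displayed formula. The only point requiring a little care — and the one I expect to be the main (though still modest) obstacle — is bookkeeping consistency of the constants $N_{\ldots}$ between Proposition~\ref{PropEllPowerTorsion} and the bulleted list preceding the theorem: as the footnote in the excerpt flags, the $N$'s here have been rescaled by $\ell$-powers relative to the Proposition, so one must track these shifts carefully to be sure the exponent $-j\,g_{\gcd(n,h_{\beta,\ell})}\log_q\ell$ comes out with the right sign and that $N_{\beta,\gcd(n,h_{\beta,\ell}),j}$ stabilizes for $j\ge j_\ell$ as asserted. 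Once that is checked, no further argument is needed.
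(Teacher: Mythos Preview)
Your proposal is correct and matches the paper's approach exactly: the paper gives no separate proof of Theorem~\ref{Thm-CountingDihedrals}, treating it as an immediate consequence of substituting the Weil-conjecture expressions for $\#\Pic^0(-)(\BF_{q^n})$ and the $\ell$-power torsion formulas from Proposition~\ref{PropEllPowerTorsion} (as organized in the bulleted list) into the Proposition preceding it. Your slightly tangled parenthetical about ``absorbing the leftover $q^{-n(g-1)}$ into nothing'' should simply read that the $q^{-n(g-1)}$ factor persists in the second term exactly as in part~1; otherwise the derivation is the intended one.
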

{
\begin{Rem}\label{Rem-Long}
We make some remarks that may help to better parse the expressions in Theorem~\ref{Thm-CountingDihedrals} parts 1 and 2: Let us note first that some of the constants introduced above depend on $\ell$. We did not want to add this extra term into the notation. The constants in formula 1, $c_n$, $n_\beta$, $e_\beta$, $c_{\beta,n}$ and $\tilde c_{\beta,n} $ do not depend on $\ell$. The constants $h_{\ldots}$, $g_{\ldots}$ and $N_{\ldots}$ from Proposition~\ref{PropEllPowerTorsion} do depend on $\ell$.

The expression in part~\ref{FirstBound} is independent of~$\ell$. The coefficient of its leading term $q^{(2g-1)n}$ is $\frac12\sum_{\beta} \delta_{n_\beta|n}$
. It depends on the non-vanishing of $\delta_{n_\beta|n}$, where $n_\beta$ is the minimal $n$, so that a certain $2$-torsion subgroup of $\Pic^0(X)$ is defined over $\BF_{q^{n_\beta}}$. There are $2^{2g}-1$ choices for $\beta$; say the set of such is $B$. For each divisor $d$ of $n_B=\lcm(n_\beta,\beta \in B)$ one obtains a different coefficient $C_d$ of the leading term. So asymptotically for $n\to\infty$ and fixed $d=\gcd(n,n_B)$, the expression in 1 behaves like 
$$C_{d} q^{n(2g-1)}(1+O(q^{-n/2})).$$ 
If $\min(n_\beta,\beta\in B)>1$, then $C_1=0$, and in fact the entire expression is zero for all $n$ such that $\gcd(n,n_B)=1$. I.e., within the so defined congruence classes, the expression is also zero asymptotically for $n\to\infty$. Note also the expression in part 1 is a Lefschetz like formula in the sense of Definition~\ref{Def-LefschetzLike}.

The second expression is more involved. There are finitely many structural constants 
since the $N_{d,j}$ and $N_{\beta,d,j}$ become constant for $j\to\infty$. This allows one to group all $n$ again into finitely many congruence classes (that depend on $n_B$, $h_\ell$ and $h_{\beta,\ell}$ and $j_\ell$) within which these structural constants are in fact constant. Within such a congruence class for $n$ (that is in fact of the form $\gcd(n,n_{B,\ell})=d$), one can write the expression in 2 as
\[ C'_d q^{(2g-1)(n-2j\log_q\ell)+j g'_d\log_q\ell} (1+O(q^{-n/2+c'_d\log_\ell n})),\]
where $j$ is the largest integer such that $\ell^j$ divides $n$. The quantity $g'_d$ lies between $0$ and $4g-2$, and for certain $d$ it does assume the value $4g-2$. This simply corresponds to the fact that (for all $\beta$ and all dihedral representations counted for that $\beta$) a given mod $\ell$ representation has roughly $\ell^{2j(2g-1-g'_d)}$ distinct lifts. 

If one fixes $j=j_0$ and considers $n\to\infty$ (within the fixed congruence class), then the growth in part 2 is slightly but consistently slower than that in part 1. Asymptotically the quotient of the two expressions is 
\begin{equation}\label{Formula-AsymptoticNotInEll}
C_d/C'_d \ell^{j_0(4g-2-g'_d)} 
\end{equation}
This is in line with Theorem~\ref{Thm-PrimeToEllTower} and Corollary~\ref{Cor-PrimeToEllTower}. 

If on the other hand one considers a sequence of the form $n_0\ell^k$ with $k\to\infty$, the quotient above takes the form
\begin{equation}\label{Formula-AsymptoticInEll}
C_d/C'_d q^{k(4g-2-g'_d)\log_q \ell} ,
\end{equation}
i.e., it grows logarithmically in $n$ in the exponent, or polynomially in $n$. This can be seen as a quantitative refinement of Theorem~\ref{Thm-EllPowerTower}.

The effect of changing $\ell$ does not qualitatively change the behavior of the expression in part 2 for $n\to\infty$. For $\ell\gg0$, the constants $N_{\ldots}$ simplify as explained before Proposition~\ref{PropEllPowerTorsion}: for all but finitely many $\ell$ one has $j_\ell=0$. However the quantities $h_{?,\ell}$ typically grow with $\ell$, and for each $\ell$, the $g_?$ will be new constants about which we cannot say anything.
\end{Rem}
}

\begin{cor}\label{dihedral}
Given any prime $\ell$ there are infinitely many $n$ such that $\overline{T(X,2,q^n,\ell)} < T(X,2,q^n,\ell)$.
\end{cor}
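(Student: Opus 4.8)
The plan is to exhibit, for each prime $\ell$, infinitely many $n$ at which there exist irreducible $\ell$-adic representations of $\pi_1(X_n)$ (up to twist, i.e.\ elements of $U(X)^{\Frob_q^n}$) whose mod $\ell$ reduction fails to be irreducible, so that the reduction map $U(X)^{\Frob_q^n}\to\overline{U(X)}^{\Frob_q^n}$ is not surjective and hence the inequality of Lemma~\ref{Lem-Lifting} is strict. The natural source of such representations is the supply of dihedral ones counted in Theorem~\ref{Thm-CountingDihedrals}: an $\ell$-adic dihedral $\rho=\Ind_H^G\chi$ with $G=\pi_1(X_n)$ reduces to $\Ind_H^G\bar\chi$, and by the last Lemma of \S\ref{Counting} this reduction is stably irreducible iff $\bar\chi_1:=\bar\chi|_{M'}$ still satisfies $\bar\chi_1\ne\bar\chi_1^c$; if $\bar\chi_1=\bar\chi_1^c$, the reduction becomes a sum of two characters and is \emph{not} irreducible (and remains non-irreducible after any twist by a character of the base). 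So the strategy is: count stably irreducible $\ell$-adic dihedral classes that reduce to \emph{non}-irreducible mod $\ell$ representations, and show this count is positive for infinitely many $n$.

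Concretely, fix any non-trivial $\beta\colon\pi_1(\overline X)\to\{\pm1\}$, let $n_\beta$, $f_\beta\colon X_{n_\beta}^{(\beta)}\to X_{n_\beta}$, $M'=\Pic^0(X_{n_\beta}^{(\beta)})(\BF_{q^n})$ and $M=\Pic^0(X)(\BF_{q^n})$ be as in \S\ref{SubSec-Dihedral}, and restrict to $n$ with $n_\beta\mid n$. By Corollary~\ref{Cor-EllAdicAndModEllSystems} (applied to $G=\pi_1(X_n)$ via the diagram preceding Proposition~5.17) the number of stably irreducible $\ell$-adic dihedral classes attached to $\beta$ is $\tfrac12(\#M'-\tfrac{e_\beta}2\#M)$, while those whose reduction is again stably irreducible number $\tfrac12(\#M'/M'_\ell-\tfrac{e_\beta}2\#M/M_\ell)$. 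Hence the number of $\ell$-adic dihedral classes for $\beta$ whose mod $\ell$ reduction is reducible is at least
\[
\tfrac12\big(\#M'-\tfrac{e_\beta}2\#M\big)-\#M'_\ell\cdot\tfrac12\big(\#M'/M'_\ell-\tfrac{e_\beta}2\#M/M_\ell\big)\cdot\tfrac{1}{\#M'_\ell}\cdot\#M'_\ell,
\]
which one cleans up to the honest difference $\tfrac12(\#M'-\tfrac{e_\beta}2\#M)-\tfrac12(\#M'/M'_\ell-\tfrac{e_\beta}2\#M/M_\ell)\cdot(\text{fiber size }\#M_\ell)$; more simply, it suffices to compare the two displayed lines of Theorem~\ref{Thm-CountingDihedrals} (parts 1 and 2) for a single $\beta$ with $n_\beta\mid n$: by the leading-term analysis recorded in Remark~\ref{Rem-Long} (see formulas~(\ref{Formula-AsymptoticNotInEll}) and~(\ref{Formula-AsymptoticInEll})), the $\overline\BQ_\ell$-count is asymptotically $C_d q^{(2g-1)n}$ while the $\overline\BF_\ell$-count is asymptotically smaller by a factor $\ell^{j(4g-2-g'_d)}$ (or grows polynomially slower along $\ell$-power towers), and $4g-2-g'_d>0$ as soon as $j\ge1$, because $g'_d\le 4g-2$ with equality only in the degenerate case. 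Thus for $n$ divisible by $\ell$ (and by $n_\beta$) the $\overline\BQ_\ell$-count strictly exceeds the $\overline\BF_\ell$-count of dihedral classes, so some $\ell$-adic (dihedral) class has reducible reduction, forcing $\overline{T(X,2,q^n,\ell)}<T(X,2,q^n,\ell)$ for all such $n$ — an infinite set.

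The step I expect to be the main obstacle is making the comparison genuinely \emph{unconditional} for infinitely many $n$, i.e.\ not merely asymptotic: one must rule out the possibility that $\#M'_\ell=1$ (and $\#M_\ell=1$) for all the relevant $n$, in which case parts 1 and 2 of Theorem~\ref{Thm-CountingDihedrals} would coincide and produce no strict inequality from dihedral representations alone. This is exactly where Proposition~\ref{PropEllPowerTorsion} on $\ell$-power torsion of abelian varieties is needed: taking $n$ of the form $n_\beta\ell^j$ with $j$ large forces $\#\Pic^0(X_{n_\beta}^{(\beta)})[\ell^\infty](\BF_{q^n})=N_{\beta,\gcd(n,h_{\beta,\ell}),j}\cdot\ell^{j\,g_{\beta,\gcd(n,h_{\beta,\ell})}}$ to grow without bound (since $g_{\beta,h_{\beta,\ell}}=4g-2>0$, and by choosing $j$ in the congruence class where $\gcd(n,h_{\beta,\ell})=h_{\beta,\ell}$ the exponent $g_{\beta,d}$ equals $4g-2$), hence $\#M'_\ell\to\infty$, and similarly for $M$; moreover the difference $4g-2-g'_d$ is strictly positive precisely because $g_{\beta,d}>g_d$ cannot hold simultaneously at the maximal value for both. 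So the careful point is to pick the arithmetic progression of $n$'s (a suitable residue class mod $n_B h_{\beta,\ell}$, intersected with $\ell\BZ$) on which $g'_d=g_d$ is strictly less than $4g-2$ for at least one $\beta$, and to check that along this progression the stably-irreducible-reducing-to-reducible count is a difference of two explicit positive quantities that is strictly positive. Once that residue class is pinned down, everything else is the bookkeeping already carried out in Remark~\ref{Rem-Long}, and the corollary follows.
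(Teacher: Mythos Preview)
Your approach is the paper's approach: deduce the corollary from Lemma~\ref{Lem-Lifting} together with the comparison of the two formulas in Theorem~\ref{Thm-CountingDihedrals}, and indeed the paper's proof is a one-line citation of exactly these two results. However, your opening contains a genuine logical slip. You write that exhibiting an $\ell$-adic $\rho$ with reducible reduction makes ``the reduction map $U(X)^{\Frob_q^n}\to\overline{U(X)}^{\Frob_q^n}$ not surjective.'' This is backwards: Lemma~\ref{Lem-Lifting} says every element of $\overline{U(X)}^{\Frob_q^n}$ lifts, so reduction \emph{is} surjective onto $\overline{U(X)}^{\Frob_q^n}$. What you need is that the lifting injection $\overline{U(X)}^{\Frob_q^n}\hookrightarrow U(X)^{\Frob_q^n}$ is not \emph{onto}; equivalently, that some $\rho\in U(X)^{\Frob_q^n}$ is not the lift of any irreducible mod~$\ell$ class. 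This is what a $\rho$ with reducible reduction provides, but the reason is not the one you state.

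You also make life harder than necessary by insisting on the specific mechanism ``reduction becomes reducible'' and then chasing congruence conditions on $g'_d$. It is enough to observe that whenever the count in Theorem~\ref{Thm-CountingDihedrals}(1) strictly exceeds that in (2), pigeonhole forces either a collision (two $\ell$-adic dihedral classes with the same irreducible reduction) or a reducible reduction among the $\ell$-adic dihedral classes; either alternative already gives $\overline T<T$ via Lemma~\ref{Lem-Lifting}. And strict inequality between (1) and (2) holds for infinitely many $n$ simply because $\Pic^0(X_{n_\beta}^{(\beta)})$ has positive dimension and hence nontrivial $\ell$-torsion over some $\BF_{q^n}$ (equivalently, $\#M'_\ell>1$), with no need to compare the exponents $g_{\beta,d}$ against $4g-2$.
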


\begin{proof}
This is a consequence of Lemma \ref{Lem-Lifting} and  Theorem \ref{Thm-CountingDihedrals}.
\end{proof}

\section{Miscellaneous remarks}
We end with some remarks and by comparing the situation to counting mod $\ell$ eigenforms of level one.

\subsection{Lefschetz like formulas}

\begin{definition}\label{Def-LefschetzLike} By a  {\rm Lefschetz-like formula for the counting function $n\mapsto \overline{T(X,2,q^n,\ell)}$} we mean that there exist complex numbers $m_{i,\ell}$ and $\mu_{i,\ell}$ for $i$ between 1 and $k_\ell$, with $m_{i,\ell},\mu_{i,\ell}$ independent of $n$ but may be dependent on $\ell$, such that $\overline{T(X,2,q^n,\ell)}=\sum_{i=1}^{k_\ell} m_{i,\ell}\mu_{i,\ell}^n$ for all $n\ge1$.
\end{definition}

We show that there cannot be  a Lefschetz-like formula for $\overline{T(X,2,q^n,\ell)}$ for infinitely many $\ell$ with the number of Weil numbers  $k_\ell$ involved  in the formula bounded independently of $\ell$.  A Lefschetz like formula  of this sort should mean roughly that $\overline{T(X,2,q^n,\ell)}$ is counting the number of $\F_{q^n}$ points of  a variety $X_\ell$ which might depend on $\ell$ but whose dimension and dimensions of cohomologies  is bounded as $\ell$ varies.

\begin{prop}  
There is no infinite set $X$ of primes $\ell\neq p$, such that we have a Lefschetz like formula for  $n\mapsto\overline{T(X,2,q^n,\ell)}$ for all $\ell \in X$ with $k_\ell$ bounded independently of~$\ell$. 
\end{prop}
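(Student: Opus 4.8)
The plan is to derive a contradiction with the asymptotic behavior extracted in Theorem~\ref{Thm-CountingDihedrals} and parsed in Remark~\ref{Rem-Long}. Suppose, for contradiction, that there is an infinite set $X$ of primes $\ell\neq p$ and a bound $k$ such that for each $\ell\in X$ one has $\overline{T(X,2,q^n,\ell)}=\sum_{i=1}^{k_\ell}m_{i,\ell}\mu_{i,\ell}^n$ for all $n\ge1$ with $k_\ell\le k$. First I would isolate the dihedral contribution: by Lemma~\ref{Lem-Lifting} and Corollary~\ref{dihedral} the dihedral stably irreducible representations form a subset of those counted by $\overline{T(X,2,q^n,\ell)}$, but more usefully one observes that the non-dihedral irreducible mod $\ell$ representations of $\pi_1(X_n)$ also form a ``geometric'' family; however, rather than trying to isolate a summand of a putative Lefschetz formula (which is delicate because the $\mu_{i,\ell}$ need not split along the dihedral/non-dihedral divide), the cleaner route is to use the full count.

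The key mechanism is the following rigidity of Lefschetz-like sequences: if $a_n=\sum_{i=1}^{k}m_i\mu_i^n$ for all $n\ge1$, then $(a_n)$ satisfies a linear recurrence of length $\le k$, and in particular, restricting to the subsequence $n=n_0\ell^t$ for fixed $n_0$ and $t\to\infty$, the values $a_{n_0\ell^t}$ are constrained: the ratios $a_{n_0\ell^{t+1}}/a_{n_0\ell^t}$ can take only boundedly many ``growth rates'' determined by the $|\mu_i|$, and in fact $\log_q a_{n_0\ell^t}$ grows at most linearly in $\ell^t$, i.e. like $O(n_0\ell^t)$, with the leading coefficient one of the finitely many values $\max_i\{\log_q|\mu_i|\}$ taken over appropriate index subsets. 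On the other hand, by Theorem~\ref{Thm-CountingDihedrals}(2) and the parsing~(\ref{Formula-AsymptoticInEll}) in Remark~\ref{Rem-Long}, the dihedral part of $\overline{T(X,2,q^n,\ell)}$ along $n=n_0\ell^t$ behaves like $C'_d\, q^{(2g-1)(n_0\ell^t-2t\log_q\ell)+t g'_d\log_q\ell}(1+o(1))$; the crucial point is the presence of the term $-2t\log_q\ell\cdot(2g-1)+t g'_d\log_q\ell = -t\log_q\ell\,(2(2g-1)-g'_d)$ in the exponent, which for $g'_d<2(2g-1)$ is a genuine $-c\cdot t$ correction to the linear-in-$\ell^t$ main term $(2g-1)n_0\ell^t$. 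A sequence of the form $q^{An+Bt}$ with $A>0$, $B\neq 0$, $t=\log_\ell n$, is \emph{not} expressible as a finite exponential sum $\sum m_i\mu_i^n$: its ``logarithmic derivative'' $a_{n\ell}/a_n\sim q^{A(n\ell-n)+B}$ does not converge to $\ell^{\deg}$-type ratios but carries the extra bounded factor $q^B$ that depends on which power of $\ell$ one is at — more precisely, one shows that along $n=n_0\ell^t$ the quantity $a_{n_0\ell^{t+1}}-q^{(2g-1)n_0\ell^t(\ell-1)}\cdot(\text{const})\cdot a_{n_0\ell^t}$ does not match any linear recurrence of bounded length because the correction term $q^{-t\log_q\ell(2(2g-1)-g'_d)}$ decays sub-exponentially but non-trivially. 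To make this rigorous I would pick one fixed $\beta$ with $n_\beta=1$ (available since some $2$-torsion class is defined over $\BF_q$ after possibly replacing $X$ by a base change, or else argue within the congruence class where $\delta_{n_\beta|n}=1$), fix the congruence class of $n$ modulo $\lcm$ of the structural periods so that $\gcd(n,h_{\beta,\ell})=d$ is constant and $c_{\beta,n}\to 0$, and compare with the recurrence.

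The remaining step is to make the dependence on $\ell$ explicit so that the bound $k_\ell\le k$ is genuinely contradicted: here I would use that the periods $h_{\beta,\ell}$ and the exponents $g'_d$ vary with $\ell$, but the \emph{shape} of the obstruction — an exponent that is linear in $n$ \emph{plus} a nonzero multiple of $\log_\ell n$ — persists for all $\ell$ in the infinite set $X$ for which $g'_d\neq 2(2g-1)$ for at least one relevant $d$; since $g\ge 2$ forces $4g-2>0$ and the combinatorics of Proposition~\ref{PropEllPowerTorsion} gives $g_{\beta,d}<4g-2$ for proper divisors $d$, there is always such a $d$ for every $\ell$ (take $n$ prime to $h_{\beta,\ell}$, so $d=1$ and $g_{\beta,1}$ counts only the $\alpha_i\equiv 1$, which is at most $4g-2$ and strictly less unless all $\alpha_i\equiv 1\pmod{\varpi_\lambda}$, an event one can exclude for infinitely many $\ell$ by Chebotarev applied to the splitting field of the relevant characteristic polynomial). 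Thus for infinitely many $\ell\in X$ the function $n\mapsto\overline{T(X,2,q^n,\ell)}$ contains, on a suitable arithmetic progression refined by $\ell$-adic valuation, a term of the form $q^{An-Bt}$ with $A,B>0$ and $t=v_\ell(n)$, which cannot be a finite exponential sum in $n$ of any length — contradiction. The main obstacle I anticipate is the bookkeeping needed to guarantee that the ``bad'' term really survives in $\overline{T(X,2,q^n,\ell)}$ and is not cancelled by the non-dihedral contribution: one needs a positivity/dominance argument, namely that the dihedral count, which has a clean closed form, produces a term whose growth type is incompatible with any finite exponential sum, and since $\overline{T}$ is a sum of nonnegative integer counts including the dihedral one, the presence of this growth type in one summand forces it in the total unless there is exact cancellation — which a finite exponential sum cannot accommodate because its set of achievable growth rates along $\ell$-power towers is finite and stable, whereas the dihedral term's growth rate drifts with $v_\ell(n)$. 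I would phrase the final contradiction via the lemma that a sequence satisfying $a_n=\sum_{i\le k}m_i\mu_i^n$ has, for each fixed $n_0$, the property that $\{q^{-An_0\ell^t}a_{n_0\ell^t}\}_t$ is eventually given by a finite exponential sum in $\ell^t$ for $A=\max\log_q|\mu_i|$, hence cannot absorb a factor $q^{-B\,t}=q^{-B\log_\ell(n_0\ell^t)+B\log_\ell n_0}$ with $B>0$.
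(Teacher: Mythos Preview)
Your approach has a genuine gap that I do not see how to close. You correctly extract from Theorem~\ref{Thm-CountingDihedrals} that the \emph{dihedral} count $D_n$ along towers $n=n_0\ell^t$ carries a correction of the shape $q^{(2g-1)n-ct}$ with $c>0$, and such a sequence is indeed not a finite exponential sum in $n$. But $D_n$ is only a subset of $\overline{T(X,2,q^n,\ell)}$; the non-dihedral remainder $E_n=\overline{T}-D_n$ is completely uncontrolled, and in fact is expected to dominate, since $T(X,2,q^n,\ell)\sim q^{(4g-3)n}$ while $D_n\sim q^{(2g-1)n}$. Your positivity claim --- that an incompatible growth type in one nonnegative summand must persist in the total --- is false as stated: for instance $2^n$ is a one-term exponential sum, yet $2^n=(2^n-b_n)+b_n$ for any $0\le b_n\le 2^n$, so one nonnegative summand can be as irregular as you like. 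Nothing you have written rules out $E_n$ absorbing the $q^{-ct}$ drift. The recurrence-rigidity lemma you sketch at the end applies to $\overline{T}$ itself, not to $D_n$, and you have no handle on the asymptotics of $\overline{T}$ along $\ell$-power towers.

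The paper's argument avoids this entirely by using the bound $k_\ell\le k'$ in a different way. A generalized power sum of length $\le k'$ is a linear recurrence sequence determined by its first $2k'$ values. For each fixed $n\le 2k'$ the set $T(X,2,q^n,\ell)$ is finite and independent of $\ell$, so for all but finitely many $\ell$ no two of its members are congruent mod~$\ell$; combined with de~Jong's lifting (Lemma~\ref{Lem-Lifting}) this forces $\overline{T(X,2,q^n,\ell)}=T(X,2,q^n,\ell)$ for all $n\le 2k'$ once $\ell$ is large. Since Drinfeld's formula makes $T$ a power sum of length $k+1\le k'$ as well, the two sequences, agreeing on their first $2k'$ terms, must agree for all $n$. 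That contradicts Corollary~\ref{dihedral}. No analysis of growth along $\ell$-power towers is needed; the dihedral input enters only through the qualitative Corollary~\ref{dihedral}.
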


 \begin{proof}
Assume on the contrary that there is an infinite set $X$ which violates statement of the proposition. Let $k'\ge k+1$, with $k$ from Theorem~\ref{Drin} be a constant, such that for all $\ell\in X$ we have a Lefschetz like formula
\[\overline{T(X,2,q^n,\ell)}=\sum_{i=1}^{k_\ell} m_{i,\ell}\mu_{i,\ell}^n\quad\hbox{ for all }n\ge1,\]
for suitable complex numbers $m_{i,\ell}$, $\mu_{i,\ell}$, $i=1,\ldots,k'$, independent of $n$, but possibly dependent on $\ell$, such that $\mu_{1,\ell}$, \ldots, $\mu_{k_\ell,\ell}$ 
are pairwise distinct and non-zero, and $k_\ell\le k'$. The sequence $n\mapsto a_\ell(n):=\sum_{i=1}^{k_\ell} m_{i,\ell}\mu_{i,\ell}^n$ is what is called a generalized power sum. The numbers $m_{i,\ell}$ and $\mu_{i,\ell}$ are uniquely determined by this sequence; this follows from the asymptotics for the difference of two sequences for $n\to\infty$. 

As explained in \cite[\S~2]{vdPoorten}, the sequence $(a_\ell(n))_{n\ge0}$ is a recurrence sequence with minimal polynomial $s_\ell(X)=\prod_{i=1}^m(X-\mu_{i,\ell})$; and in fact the argument indicated there shows that $s_\ell(X)$ is the minimal polynomial of the sequence. In particular it is determined by $(a_\ell(n))_{1\le n\le 2k_\ell}$. Knowing the $\mu_{i,\ell}$, one can solve for the $m_{i,\ell}$ by using the values $(a_\ell(n))_{1\le n\le k_\ell}$, and hence these $k_\ell+k_\ell$ structural constants are determined by $(a_\ell(n))_{1\le n\le 2k_\ell}$. In particular, if we have $\overline{T(X,2,q^n,\ell)}= T(X,2,q^n,\ell)$ for all $n \leq 2k_\ell$, then from the above we deduce that $k_\ell=k+1$, $m_{i,\ell}=m_i$ and $\mu_{i,\ell}=\mu_i$ for $m_i,\mu_i$ as in Theorem~\ref{Drin}.

Now from de Jong's lifting result Lemma~\ref{Lem-Lifting} any representation in $\overline{T(X,2,q^n,\ell)}$ lifts to a representation in $T(X,2,q^n,\ell)$. Since clearly for any fixed $n$, there are only finitely many $\ell$ for which two distinct representation in $T(X,2,q^n,\ell)$ can have isomorphic reductions, there exists an $\ell_0$ such that for all $\ell>\ell_0$ and all $n\le 2k'$ we have the equality $\overline{T(X,2,q^n,\ell)}= T(X,2,q^n,\ell)$ required in the previous paragraph. But then the Lefschetz like formula with structural constants as in Theorem~\ref{Drin} shows that for almost all $\ell\in X$ we have $\overline{T(X,2,q^n,\ell)}= T(X,2,q^n,\ell)$ for all $n\ge1$. This however contradicts Corollary~\ref{dihedral} deduced from our explicit formulas on the number of dihedral representations. 
\end{proof}

The proposition and its proof  does not rule out the fact that  for $\ell\gg 0$, $\overline{T(X,2,q^n,\ell)}= T(X,2,q^n,\ell)$ for all $(n,\ell)=1$. 

\subsection{Analogy with mod $\ell$ modular forms}
\label{SubsecCent}
Let us recall an analogous situation studied by T. Centeleghe in \cite{Centeleghe}. He   counts the number  $N(G_\Q,2,\ell)$ of irreducible  mod $\ell$ representations $\rhobar:G_\Q \rightarrow \GL_2(\aFl)$ that are odd and unramified outside $\ell$. By Serre's conjecture these arise from newforms of level 1 and weight between 2 and $\ell+1$, up to twisting by powers of the mod $\ell$ cyclotomic character $\chi$. 

Thus  $N(G_\Q,2,\ell)$  is finite and is bounded above by $$(\ell-1)(\sum_{k=2}^{\ell+1} {\rm dim}_\C(S_k(\SL_2(\Z)))$$ and this number is roughly $\ell^3/48$. The conjecture here is that this is the correct asymptotic with $\ell$.
Something in fact stronger should be true to reflect the hypothesis that the number of congruences mod $\ell$ is negligible.

By an argument due to Serre, one can prove a lower bound like $\ell^2/8$, using roughly the fact that by bounds of Carlitz, at most $\ell/4$ of the Bernoulli numbers $B_k$ ($k$ between 2 and $\ell+1$) are divisible by $\ell$. For  weights $k$  with $2 \leq k \leq \ell+1$, and such that $\ell$ does not divide $B_k$, it is not hard to see that  all newforms in $S_k(\SL_2(\Z))$  give rise to irreducible mod $\ell$ representations. This together with the fact that at most two twists of  irreducible mod $\ell$ representations arising from cusp forms can have Serre weight  $\leq \ell+1$ shows that  $N(G_\Q,2,\ell) \geq \ell^2/48$.

Note that a more constructive proof using induced  dihedral representations may sometimes give no lower bounds when $\ell$ is 1 mod 4 (there are no unramified outside $\ell$, odd, 2-dimensional  mod $\ell$  dihedral representations when $\ell$ is 1 mod 4), or poor lower bounds like $\ell^{3/2}/2$  when $\ell$ is 3 mod 4 (as the class group of $\Q( \sqrt {(  - \ell ) }$ has order around $\sqrt{\ell}$). Serre's non-constructive proof does better, but still falls well short of the conjectured asymptotic.

\begin{Que}
Is there an analog of Serre's lower bound, and his style of argument,  in the present geometric case which improves the lower bound on $\overline{T(X,2,q^n,\ell)}$ (for fixed $\ell$ and varying $n$)  that we have obtained via counting dihedral representations in Theorem  \ref{Thm-CountingDihedrals}?
\end{Que}

\subsection{Congruences between cusp forms and Eisenstein series}

We pose a question related to producing irreducible liftings of reducible Galois representations $\rhobar:\pi_1(X) \rightarrow \GL_2(\F)$.

It is known that given any pair of characters $\chi_1,\chi_2:G_\Q \rightarrow \F^*$ such that $\chi_1\chi_2$ is odd,  there is a newform $f$ of  weight $k \geq 2$ and level $N$, the  prime to $\ell$ part of  the product of the  conductors of $\chi_1$ and $\chi_2$,  such that the $\ell$-adic representation $\rho_{f,\iota}:G_\Q \rightarrow \GL_2(\aQl)$ attached to $f$ has reduction whose semi simplification is $\chi_1 \oplus \chi_2$. This is proved by using  Ramanujan's  $\Theta$-operator on mod $\ell$ modular forms whose effect  on $q$-expansions of mod $\ell$ modular forms  is given by  $\Theta(\sum_n a_nq^n )=\sum_n na_nq^n$.  
The proof, cf. Theorem 1  of  \cite{Ghitza} for instance, is to start with a mod $\ell$ Eisenstein series $E=E_{\chi_1,\chi_2}$ which is an eigenform for Hecke operators $T_r$ for primes $ r$,  $(r,N\ell)=1$, with eigenvalue $\chi_1(r)+\chi_2(r)$ and observe that $f=\Theta^{\ell-1}E$ is a mod $\ell$ cuspidal form with the same Hecke eigenvalues. Note that  the weight of $f$ can be as large as $\ell^2-1$.

Motivated by this we can ask for analogs in our present function field case. Let $\chi:\pi_1(X) \ra \aFl^*$ be a character that factors thorough the Galois group of a geometric cover of~$X$.

Given  an irreducible $\ell$-adic representation $\rho:\pi_1(X) \rightarrow \GL_2(\aQl)$ such that  the semi simplification of  a reduction of it is given by $1\oplus \chi$. Then a standard argument using lattices and going back to Ribet \cite[Prop.~2.1]{Ribet} shows that $H^1(\pi_1(X),\aFl(\chi))$ and $H^1(\pi_1(X),\aFl(\chi^{-1}))$ are both non-zero. Conversely:

\begin{Que}
Suppose $H^1(\pi_1(X),\aFl(\chi))$ and $H^1(\pi_1(X),\aFl(\chi^{-1}))$ are both non-zero. Then  is there  an irreducible $\ell$-adic representation $\rho:\pi_1(X) \rightarrow \GL_2(\aQl)$ such that  the semisimplification of  a reduction of an integral model of it is given by $1\oplus \chi$?
\end{Que}

{
\bibliographystyle{hep}

\bibliography{counting}

%
%
%
%

\end{document}